\def\mystyle{}
\renewcommand{\det}{\operatorname{det}}
\begin{document}

\title{Estimates on volumes of homogeneous polynomial spaces}

\author{Itaï \textsc{Ben Yaacov}}

\address{Itaï \textsc{Ben Yaacov} \\
  Université de Lyon \\
  Université Claude Bernard -- Lyon 1 \\
  Institut Camille Jordan, CNRS UMR 5208 \\
  43 boulevard du 11 novembre 1918 \\
  69622 Villeurbanne Cedex \\
  France}

\urladdr{\url{http://math.univ-lyon1.fr/~begnac/}}

\thanks{Author supported by ANR project ValCoMo (ANR-13-BS01-0006) and ERC Grant no.\ 291111.}

\svnInfo $Id: Volume.tex 3711 2018-01-25 14:05:25Z begnac $
\thanks{\textit{Revision} {\svnInfoRevision} \textit{of} \today}

\keywords{}
\subjclass[2010]{15A15}

\begin{abstract}
  In this paper we develop the ``local part'' of our local/global approach to globally valued fields (GVFs).
  The ``global part'', which relies on these results, is developed in a subsequent paper.

  We study \emph{virtual divisors} on projective varieties defined over a valued field $K$, as well as \emph{sub-valuations} on polynomial rings over $K$ (analogous to homogeneous polynomial ideals).
  We prove a Nullstellensatz-style duality between projective varieties equipped with virtual divisors (analogous to projective varieties over a plain field) and certain sub-valuations on polynomial rings over $K$ (analogous to homogeneous polynomial ideals).
  Our main result compares the \emph{volume} of a virtual divisor on a variety $W$, namely its $(\dim W + 1)$-fold self-intersection, with the asymptotic behaviour of the volume of the dual sub-valuation, restricted to the space of polynomial functions of degree $m$, as $m \rightarrow \infty$.

  \textbf{This is work in progress}.
\end{abstract}

\maketitle

\tableofcontents

\section*{Introduction}

\section{Definability and stability in algebraically closed (valued) fields}
\label{sec:Definability}

Let us recall a few facts from model theory, as it pertains to algebraically closed fields (this is all essentially folklore).
By a \emph{formula} $\varphi(X)$, in a tuple of indeterminates $X$, we mean a Boolean combination of polynomial equalities $f(X) = 0$: if $a$ is a tuple in a field $K$ of the appropriate length, then $\varphi(a)$ is either True or False.
We often split the indeterminates in several families, with the notation $\varphi(X,Y)$.
We may substitute elements of $K$ for some of the variables, obtaining a \emph{formula with parameters} $\varphi(X,b)$.
When $A \subseteq K$, we write $\varphi(A,b)$ for $\bigl\{ a \in A : \varphi(a,b) \bigr\}$ (here and later, $a \in A$ should be understood loosely as ``$a$ is a tuple in $A$ of the appropriate length'').
When $A = K$ and $b \in K$, the set $\varphi(K,b)$ is called a \emph{definable set} in $K$, and is just a constructible set defined over $K$.
When no ambiguity may arise, we sometimes omit the parameters $b$ from the notation, saying that $\varphi(X)$ is a formula \emph{over} $K$.

Algebraically closed fields have \emph{quantifier elimination}: if $\varphi(X,Y)$ is a formula in $X,Y$, then $\exists Y \, \varphi(X,Y)$ is (equivalent to) a formula $\psi(X)$ (once this holds for formulae without parameters, it also holds for ones with parameters).
Indeed, this is just Chevallay's Theorem: a coordinate projection of a constructible set in an algebraically closed field is again a constructible set.
Since formulae are closed under negation, the same is true of $\forall Y \, \varphi(X,Y)$, and of any other expression constructed using quantifiers or Boolean operations.
(In fact, what we defined as a formula is what is usually called a \emph{quantifier-free} formula, but by quantifier elimination the two notions agree.)
Notice that if $D$ is a definable set, defined by $\psi(Y)$, and $\varphi(X,Y)$ is another formula, then $(\exists Y \in D) \ \varphi(X,Y)$ is again a formula (since it is the same as $\exists Y \ \bigl( \psi(Y) \wedge \varphi(X,Y) \bigr)$), and similarly for $(\forall Y \in D) \ \varphi(X,Y)$.
One important consequence of quantifier elimination is the following: if $\varphi(X)$ and $\psi(X)$ are two formulae over $K$, such that $\varphi(K) = \psi(K)$, then $\varphi(L) = \psi(L)$, for any larger field $L$.
Indeed, we may assume that $L$ is algebraically closed, and the formula the formula $\forall X \ \varphi(X) \leftrightarrow \psi(X)$ is true in $K$, so also in $L$.

The class of algebraically closed fields is \emph{stable}.
This has many equivalent characterisations, one of which is the following: if $L/K$ is an extension of algebraically closed fields, and $\varphi(X,b)$ is a formula with parameters $b \in L$, then the set $\varphi(K,b)$ is definable in $K$ (this is far from being true for any kind of structure: for example, $\bQ \subseteq \bR$ is an extension of dense linear orders without endpoints, which also have quantifier elimination, but the set $\{q \in \bQ : q < \pi\}$ is not definable in $\bQ$).
An equivalent characterisation of stability is via the existence of a (necessarily unique) \emph{notion of independence} of two structures $L$ and $M$ over a common substructure $K$, denoted $L \ind_K M$, satisfying certain axioms that we do not state here (see for example Pillay \cite{Pillay:GeometricStability}).
Thus, for example, properties of linear independence over a common subspace imply that the class of vector spaces over some fixed field is stable, and stochastic independence yields stability for probability algebras.
In the class of algebraically closed fields, when $L$ and $M$ are subfields of some large ambient field, and $K$ is a common algebraically closed sub-field, we say that $L \ind_K M$ if $L$ and $M$ are linearly disjoint over $K$, i.e., if they generate the algebra $L \otimes_K M$ inside the ambient field (since $K$ is algebraically closed, $L \otimes_K M$ is always an integral domain).
It is a general fact that in a stable class of structures, the following are equivalent:
\begin{enumerate}
\item We have $L \ind_K M$.
\item If $\varphi(X,b)$ is a formula with parameters $b \in L$, then $\varphi(M,b)$ can be defined by a formula $\psi(X,c)$ with a parameter $c \in K$ (so $\psi(X,c)$ also defines, in $K$, the restriction $\varphi(K,b)$).
\end{enumerate}

Let us see how this is proved for fields.
It is enough to consider the special case of a polynomial equality $f(X,b) = 0$, where $f(X,Y) \in \bZ[X,Y]$ and $b \in L$.
We may assume that $f$ is homogeneous in $X$, and applying the Veronese map of degree $d$, we may assume that $f$ is linear in $X$.
But then the set $f(K,b) = 0$ is a linear subspace of $K^m$, and it is easily definable, say, as the zero set of a family of linear forms with coefficients in $K$.
If $M$ is another extension of $K$, linearly disjoint of $L$ over $K$, then the linear form $f(X,b)$ vanishes on $c \in M^m$ if and only if $c$ can be expressed as an $M$-linear combination of tuples in $f(K,b) = 0$.
It follows that the same intersection of zero sets of linear forms over $K$ also defines $f(M,b) = 0$.
Going back to our original setting where $f$ is an arbitrary polynomial, if $W$ is an algebraic set defined over $K$ (or indeed, any set definable with parameters in $K$), and $L \ind_K M$, then $f(X,b) = 0$ has a solution in $W(K)$ if and only if it has a solution in $W(M)$ (by quantifier elimination).

\medskip

We are going to be interested in a similar situation, but in the context of algebraically closed fields equipped with a non-trivial valuation $v\colon K \rightarrow \bR \cup \{\infty\}$ (equivalently, with a non-trivial, non-Archimedean absolute value).
Therefore, from this point onward, a \emph{valuation} will always be in the ordered group $(\bR,+,<)$ (not necessarily onto).
Fields equipped with such a valuation are studied, from a model-theoretic point of view, in \cite{BenYaacov:MetricValuedFields}.
Following the notation there, we shall write $K \vDash MVF$ to say that $K$ is a complete valued field, and $K \vDash ACMVF$ to say that $K$ is, in addition, algebraically closed with a non-trivial valuation.

The presence of $\bR$ as a fixed object requires us to replace classical Boolean logic with real-valued \emph{continuous} logic (see \cite{BenYaacov-Usvyatsov:CFO,BenYaacov-Berenstein-Henson-Usvyatsov:NewtonMS}).
Thus, the set of possible truth values for a formula $\varphi(X)$ is no longer $\{T,F\}$, but some compact interval of $\bR$, and formulae are closed under continuous, rather than Boolean, combinations.
For this closure property we allow infinite continuous combinations, or, equivalently, finite continuous combinations and uniform limits.
We may also allow $[-\infty,\infty]$ as a truth value space, with the notion of uniform convergence corresponding to its compact topology (equivalently, arising from any homeomorphism with a compact interval of $\bR$).

\begin{conv}
  \label{conv:Predicate}
  Breaking with usual terminology, by a \emph{predicate} we mean a map from a set (possibly a Cartesian product of sets) to $[-\infty,\infty]$.
  A special case of this is the more familiar notion of a \emph{Boolean predicate}, which is a map into $\{0,1\}$ (or $\{\textit{True},\textit{False}\}$, where \textit{True} is identified with $0$ and \textit{False} with $1$).
\end{conv}

One last complication: the structure cannot be taken to be the field $K$ itself, since it is unbounded, nor the valuation ring, since its ultra-powers are not necessary valuation rings.
We take instead the disjoint union of $\bP^n(K)$ for all $n$, which will be denoted $\bP(K)$ (each $\bP^n$ is a \emph{sort} of $\bP(K)$).
Let us now define what (quantifier-free) formulae are in this structure.

\begin{ntn}
  \label{ntn:ValuationTuple}
  For $a \in K^m$ we let
  \begin{gather*}
    \tilde{v}(a) = \min_{i < m} v(a_i).
  \end{gather*}
  We shall use this mainly in two contexts, for affine points, and for polynomials, identified with their tuple of coefficients.
\end{ntn}

\begin{dfn}
  \label{dfn:AtomicFormula}
  Let $f \in \bZ[X_0,X_1,\ldots]$, where each of $X_i$ is a tuple of $n_i+1$ indeterminates, such that $f$ is homogeneous of degree $d_i$ in each $X_i$.
  Then, for $\xi_i = [x_i] \in \bP^{n_i}(K)$ we define
  \begin{gather*}
    vf(\xi_0,\xi_1,\ldots) = v \circ f(x_0,x_1,\ldots) - \sum d_i \tilde{v}(x_i) \in [0,\infty],
  \end{gather*}
  noting that it depends only on the points $\xi_0,\xi_1,\ldots$ (and not on the representatives).
\end{dfn}

Each $vf$ is considered an \emph{atomic formula}, and any continuous combinations thereof is a \emph{formula}.
Thus, each formula defines a predicate on $\bP(K)$ (i.e., on a Cartesian product of sorts of $\bP(K)$).
Notice that a polynomial $f \in K[X_0,\ldots]$ can be written as $g(b,X_0,\ldots)$, where $g \in \bZ[Y,X_0,\ldots]$ is linear in $Y$ and $b$ are the coefficients.
In this case
\begin{gather*}
  vf(\xi_0,\ldots) = vg\bigl( [b], \xi_0, \ldots \bigr) + \tilde{v}(b),
\end{gather*}
so allowing coefficients in $K$ does not change the expressive power.

\begin{fct}[{\cite[Theorem~2.4 and Proposition~2.16]{BenYaacov:MetricValuedFields}}]
  \label{fct:QuantifierEliminationACMVF}
  Algebraically closed metric valued fields admit quantifier elimination.
  In other words, if $\varphi(\xi,\zeta)$ is a formula (where each of $\xi$ and $\zeta$ represents a tuple of indeterminate projective points), then $\inf_\zeta \ \varphi(\xi,\zeta)$ is (equivalent to) a formula in every $K \vDash ACMVF$.
  Consequently, $\sup_\zeta \ \varphi(\xi,\zeta)$, as well as any more complex expression constructed using these \emph{quantifiers} and continuous combinations, is a formula.

  Moreover, every algebraic $W \subseteq \bP^n$ is a \emph{definable set} (in the sense of continuous logic), that is to say that $\inf_{\zeta \in W}\varphi(\xi,\zeta)$ is also a formula over $K$.
\end{fct}

Notice that \cite{BenYaacov:MetricValuedFields} uses multiplicative notation, whereas here we use additive notation, but since $e^{-t} \colon [0,\infty] \rightarrow [0,1]$ is a monotone homeomorphism, this changes nothing.
Given an incomplete valued field $K$, it has a unique completion $\widehat{K}$, and the quantifiers $\inf$ and $\sup$ evaluate the same in $K$ and in $\widehat{K}$.
We may therefore assume that all the fields in question are complete (and if an incomplete one arises, replace it tacitly with its completion).

Let us show that the class $ACMVF$ is stable using the characterisation given above (a different argument, by counting types, is given in \cite{BenYaacov:MetricValuedFields}).
In other words, if $L/K$ is an extension of such fields, and $\varphi(X,b)$ is a formula with parameters in $L$, then its restriction to $K$ is definable, in $K$, by a formula with parameters there.
The fundamental tool for doing this is the notion of a valued vector space.

\begin{dfn}
  \label{dfn:VectorSpaceValuation}
  Let $K \vDash MVF$, $E$ a vector space over $K$.
  A \emph{valuation} on $E$ is a function $u\colon E \rightarrow \bR \cup \{\infty\}$ satisfying, for all $x,y \in E$ and $a \in K$:
  \begin{enumerate}
  \item $u(a x) = v(a) + u(x)$
  \item $u(x+y) \geq u(x) \wedge u(y)$
  \end{enumerate}
  Its \emph{kernel} is the subspace
  \begin{gather*}
    \ker u = \{ x \in E : u(x) = \infty\}.
  \end{gather*}
  If $\ker u = \{0\}$, then we say that $u$ is \emph{reduced}.
\end{dfn}

\begin{rmk}
  \label{rmk:VectorSpaceValuation}
  One might think that what we call a valuation should be called a \emph{semi-valuation}, reserving the term \emph{valuation} to reduced ones.
  There are, however, several good reasons for our choice terminology, such as the analogy between ideals and (sub-)valuations on rings, which is explored below.
\end{rmk}

\begin{dfn}
  \label{dfb:VectorSpaceValuationQuotient}
  Let $E$ be a valued vector space and $F \subseteq E$ a subspace.
  We define a \emph{quotient valuation} on either $E$ or $E/F$ (both points of view may be useful) by:
  \begin{gather*}
    (u/F)(x) = u(x + F) = \sup \, \bigl\{ u(y) : y \in x + F \bigr\}.
  \end{gather*}
\end{dfn}

It is easy to check that the quotient valuation is indeed a valuation.
Clearly, $\tilde{v}$ is a reduced valuation on $K^m$.
Conversely:

\begin{lem}
  \label{lem:VectorSpaceValuationBasis}
  Any reduced valuation $u$ on $K^m$ is arbitrarily close, up to a change of coordinates by a triangular matrix, to $\tilde{v}$.

  If $u$ is a reduced valuation on $E$ and $F \subseteq E$ is finite-dimensional, then the quotient valuation on $E/F$ is reduced as well.
\end{lem}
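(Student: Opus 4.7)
The plan is to prove the first assertion by induction on $m$ and derive the second as a corollary. The base case $m = 1$ is immediate: any reduced valuation on $K$ is of the form $u(a) = v(a) + u(1)$, and since $v(K^\times)$ is dense in $\bR$, choosing $b \in K^\times$ with $v(b)$ within $\epsilon$ of $-u(1)$ and applying the rescaling $x \mapsto bx$ places $u$ within $\epsilon$ of $v = \tilde{v}$.

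For the inductive step, fix $\epsilon > 0$ and let $V = \operatorname{span}(\epsilon_1,\dots,\epsilon_{m-1})$. I would apply the inductive hypothesis to the reduced valuation $u|_V$ to obtain a triangular change of coordinates on $V$ after which $u|_V$ is $\epsilon$-close to $\tilde{v}|_V$, extended trivially on the last axis. Since $(V, u)$ is then close to the complete space $(K^{m-1}, \tilde{v})$, it is itself $u$-complete, and a Cauchy-sequence argument yields $s := \sup_{w \in V} u(\epsilon_m + w) < \infty$: a diverging sequence $w_n$ would be $u$-Cauchy (from $u(w_n - w_k) \geq u(\epsilon_m + w_n) \wedge u(\epsilon_m + w_k)$), converge to some $w \in V$ with $u(\epsilon_m + w) = \infty$, and force $\epsilon_m \in V$ by reducedness, a contradiction. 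I would then fix $w^* \in V$ with $u(\epsilon_m + w^*) > s - \epsilon$, set $e_m := b(\epsilon_m + w^*)$ for some $b$ with $v(b) \approx -s$, and take $(\epsilon_1', \dots, \epsilon_{m-1}', e_m)$ as the new basis, which differs from the standard one by a triangular matrix. The key estimate, for $a \in K^\times$ and $y \in V$, is the two-sided bound
\begin{gather*}
  \min\bigl(v(a),\tilde{v}(y)\bigr) - O(\epsilon) \leq u(ae_m + y) \leq v(a) + O(\epsilon),
\end{gather*}
where the lower bound is the ultrametric inequality (using $u(ae_m) \approx v(a)$ and $u(y) \approx \tilde{v}(y)$) and the upper bound follows from the quotient-valuation identity $u(ae_m + y) = v(ab) + u\bigl(\epsilon_m + w^* + y/(ab)\bigr) \leq v(ab) + s \approx v(a)$. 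Combined with the inductive estimate on $V$, this places $u$ within $O(\epsilon)$ of $\tilde{v}$ on all of $K^m$.

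For the second assertion, the first assertion applied to $(F, u|_F)$ shows that $(F, u|_F)$ is topologically equivalent to the complete space $(K^{\dim F}, \tilde{v})$, hence $u$-complete. If $x \in E$ satisfies $(u/F)(x) = \infty$, any sequence $f_n \in F$ with $u(x + f_n) \to \infty$ is $u$-Cauchy by the same estimate, converges to some $f \in F$, and the ultrametric inequality gives $u(x + f) \geq u(x + f_n) \wedge u(f - f_n) \to \infty$, forcing $x = -f \in F$ by reducedness of $u$ on $E$. I expect the main obstacle to be the two-sided estimate in the inductive step: the ultrametric inequality alone does not determine $u(ae_m + y)$ when $u(ae_m)$ and $u(y)$ are nearly equal, and it is precisely the near-maximality of $e_m$ modulo $V$ that supplies the matching upper bound and closes the gap.
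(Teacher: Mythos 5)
Your proposal is correct and follows essentially the same route as the paper: induction on $m$, inductive change of coordinates on the codimension-one subspace, a Cauchy/completeness argument (using reducedness) to show the quotient value of the last basis vector is finite, and then a rescaling to make that value approximately zero, with the near-maximality of the rescaled $e_m$ supplying the upper bound that the ultrametric inequality alone cannot. The paper packages the final estimate as a single chain of inequalities rather than as an explicit two-sided bound, and dispatches the second assertion by pointing at the same Cauchy argument you spelled out, but these are presentational rather than substantive differences.
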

\begin{proof}
  We prove the first item by induction on $m$, with $m = 0$ being trivial.
  For $m+1$, let $(e_i : i \leq m)$ denote a basis of $E$, and $F = \Span(e_i : i < m)$.
  Let $\varepsilon > 0$.
  Applying a change of coordinates to $F$ by a triangular matrix, we may assume that $|u(y) - \tilde{v}(y)| < \varepsilon$ for all $y \in F$.
  Let $\alpha = u(e_m + F)$.
  If $\alpha = \infty$, then there exist sequences $(a_{i,n})$ for $i < m$ such that $u\left( e_m + \sum_{i<m} a_{i,n} e_i \right) \rightarrow \infty$ as $n \rightarrow \infty$.
  By our assumption regarding $u$ on $F$ each of the sequences $(a_{i,n})$ must be Cauchy, and therefore converge to some $a_i$.
  But then $u\left( e_m + \sum_{i<m} a_i e_i \right) = \infty$, a contradiction.

  Therefore $\alpha < \infty$, and we may choose $x \in e_m + F$ such that $\alpha < u(x) + \varepsilon$.
  Taking $b \in K$ such that $\alpha - \varepsilon < -v(b) < u(x)$, we have $0 < u(bx) \leq \alpha + v(b) < \varepsilon$.
  Replacing $e_m$ with $bx$, we complete a triangular change of basis for $K^{m+1}$, and now $0 < u(e_m) < \alpha < \varepsilon$.
  Let $y \in E$.
  If $y \in F$, then we already have $|u(y) - \tilde{v}(y)| < \varepsilon$.
  Otherwise, we may assume that $y \in e_m + F$.
  If $u(y-e_m) \leq 0$, then $u(y) = u(y-e_m)$, and in any case $u(y) \leq \varepsilon$.
  Therefore
  \begin{gather*}
    u(y)
    \leq 0 \wedge u(y-e_m) + \varepsilon
    \leq \tilde{v}(e_m) \wedge \bigl( \tilde{v}(y-e_m) + \varepsilon \bigr) + \varepsilon
    \leq \tilde{v}(y) + 2\varepsilon.
  \end{gather*}
  Since clearly $u \geq \tilde{v}$, we reach the desired conclusion.

  The second assertion follows from our argument that $\alpha < \infty$.
\end{proof}

\begin{dfn}
  \label{dfn:ValuationGeneric}
  Let $L/K$ be an extension.
  \begin{enumerate}
  \item Say that a tuple $a \in L^m$ is \emph{weakly $v$-generic} over $K$, if for every polynomial $h \in K[X]_d$:
    \begin{gather*}
      v\bigl( h(a) \bigr) \leq \tilde{v}(h).
    \end{gather*}
    Notice that this implies that $\tilde{v}(a) \leq 0$.
  \item Say that $a$ is \emph{$v$-generic} over $K$ if for every polynomial $h \in K[X]_d$:
    \begin{gather*}
      v\bigl( h(a) \bigr) = \tilde{v}(h).
    \end{gather*}
    Equivalently, if it is weakly $v$-generic and $\tilde{v}(a) = 0$.
  \end{enumerate}
\end{dfn}

\begin{lem}
  \label{lem:ValuationGeneric}
  Let $K$ be a valued field.
  \begin{enumerate}
  \item One can always adjoin to $K$ new $v$-generic elements.
  \item Assume that $a \in L^m$ is $v$-generic over $K$, and $b \in K^m$.
    Then $a + b \in L^m$ is weakly $v$-generic over $K$.
  \end{enumerate}
\end{lem}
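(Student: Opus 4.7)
The plan is to handle the two parts separately.

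For part (1), I would introduce the Gauss valuation on the purely transcendental extension $L := K(t_1, \ldots, t_m)$: set $v\bigl(\sum c_I t^I\bigr) := \min_I v(c_I)$ for polynomials and extend to $L$ by $v(f/g) := v(f) - v(g)$. This is a standard construction, and the resulting $L$ is a valued extension of $K$. The tuple $t = (t_1, \ldots, t_m) \in L^m$ is then $v$-generic over $K$ essentially by definition: for any $h = \sum c_I X^I \in K[X]_d$ we have $v\bigl(h(t)\bigr) = \min_I v(c_I) = \tilde{v}(h)$, matching the required equality.

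For part (2), the strategy is to move the shift $b$ from the argument into the polynomial. Given $h \in K[X]_d$, set $h'(X) := h(X+b)$, so that $h(a+b) = h'(a)$. By the $v$-genericity of $a$,
\[
  v\bigl(h(a+b)\bigr) \;=\; v\bigl(h'(a)\bigr) \;=\; \tilde{v}(h'),
\]
and it remains to show $\tilde{v}(h') \leq \tilde{v}(h)$, which is precisely the weakly $v$-generic inequality. The crucial observation is that the translation $X \mapsto X+b$ fixes the top-degree homogeneous part: expanding $(X+b)^I = \sum_{J \leq I} \binom{I}{J} X^J b^{I-J}$, the only contribution to the coefficient of a degree-$d$ monomial $X^J$ in $h'$ comes from $I = J$, giving $c_J$. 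Thus the degree-$d$ part of $h'$ coincides with $h$ itself, and the minimum valuation over \emph{all} coefficients of $h'$ (which is $\tilde{v}(h')$) is bounded above by the minimum over its top-degree coefficients, which equals $\tilde{v}(h)$.

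The main obstacle lies in the passage from $v\bigl(h'(a)\bigr)$ to $\tilde{v}(h')$: when $b \neq 0$, the polynomial $h'$ is no longer homogeneous even if $h$ was, so one must either read $v$-genericity as applying to all polynomials in $K[X]$, or decompose $h'$ into its homogeneous pieces, apply $v$-genericity to each, and then show via a careful ultrametric analysis that no cancellation occurs when summing over different total degrees — both reflecting the fact that a $v$-generic tuple is supposed to behave, by design, like the Gauss generic tuple.
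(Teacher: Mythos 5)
Your proof is correct and follows essentially the same route as the paper: Part~1 is the Gauss valuation construction (the paper phrases it as $\tilde{v}$ being multiplicative on $K[X]$, giving a valuation on $K(X)$ under which $X$ is $v$-generic), and Part~2 is handled by the observation that the translation $X \mapsto X + b$ fixes the top-degree homogeneous part. The paper dismisses Part~2 as ``immediate''; you have spelled out the key step it suppresses, which is welcome.

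One caution on your closing paragraph. Of the two routes you sketch for bridging $v\bigl(h'(a)\bigr) = \tilde{v}(h')$, only the first is viable. The alternative — decompose $h'$ into homogeneous pieces, apply homogeneous $v$-genericity piecewise, and rule out cancellation — genuinely fails: a tuple like $a = (1,\alpha)$ with $\overline{\alpha}$ transcendental over the residue field satisfies $v\bigl(h(a)\bigr) = \tilde{v}(h)$ for every \emph{homogeneous} $h$, yet with $b = (-1,0)$ one has $a+b = (0,\alpha)$ and $v\bigl(X_0(a+b)\bigr) = \infty$, so cancellation across degrees is unavoidable and weak $v$-genericity of $a+b$ breaks. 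So the lemma is really using that $v$-genericity holds for \emph{all} polynomials (equivalently, that $a$ is algebraically independent over $K$ and induces the Gauss valuation), which is exactly what Part~1 delivers; the homogeneous-only condition is not strong enough. Presenting the two readings as interchangeable, ``both reflecting'' the Gauss intuition, glosses over the fact that one of them would leave a hole in the proof.
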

\begin{proof}
  It is a standard (and easy) fact that $\tilde{v}$ is multiplicative on $K[X]$, namely, that $\tilde{v}(fg) = \tilde{v}(f) + \tilde{v}(g)$ for all $f,g \in K[X]$.
  This gives rise to a valuation $w(f/g) = \tilde{v}(f) - \tilde{v}(g)$ on the fraction field $L = K(X)$, for which $X$ are $v$-generic over $K$.
  The rest is immediate.
\end{proof}

\begin{prp}
  \label{prp:PolynomialTrace}
  Let us fix a degree $d \geq 1$, and let $W \subseteq \bP^n$ be an algebraic set defined over $K$.
  Then the following are equivalent for a function $\eta\colon W(K) \rightarrow \bR \cup \{\infty\}$:
  \begin{enumerate}
  \item
    \label{item:PolynomialTraceIn}
    The function $\eta$ is a uniform limit on $W(K)$ of functions of the form $\min_{i<N} \ vg_i\rest_{W(K)}$, where $g_i \in K[X]_d$.
  \item
    \label{item:PolynomialTraceOut}
    There exists an extension $L/K$ and polynomial $f \in L[X]_d$ such that $\eta = vf\rest_{W(K)}$.
  \item
    \label{item:PolynomialTraceLinearValuation}
    [When $d = 1$] There exists a valuation $u$ on $K^{n+1}$ such that $\eta(\xi) = u(x) - \tilde{v}(x)$ for all $\xi = [x] \in W(K)$.
  \end{enumerate}
  Moreover, if $\eta$ is finite then it is bounded in $\bR$, and in the second item one may always take $N = \binom{n+d}{d}$.
\end{prp}
\begin{proof}
  Applying a Veronese map, and replacing $n$ with $\binom{n+d}{d}-1$, we may assume that $d = 1$.
  We may further assume that $W(K)$ does not lie in any hyperplane of $\bP^n$.
  \begin{cycprf*}[1]
  \item
    Let us first assume that $\eta = \min_{i<N} \ vg_i\rest_{W(K)}$ for some $g_i \in K[X]_1$.
    Let $L/K$ be an extension containing a tuple $a$ of $N$ $v$-generic elements, and let $f = \sum a_i g_i \in L[X]_1$.
    Then $\eta = vf\rest_{W(K)}$.
    Also, a direct calculation shows that $\tilde{v}(f) = \min \ \tilde{v}(g_i) \leq \inf \eta$.

    Let us show that the difference $\inf \eta - \tilde{v}(f)$ is bounded by a constant $M$ which depends only on $W$, and not on $\eta$ or the $g_i$.
    Indeed, assume not.
    Then there exists a sequence of $(g_{m,i} : m \in \bN, \ i < N_m) \subseteq K[X]_1$ such that $\min_i \ \tilde{v}(g_{m,i}) = 0$ for all $m$, and yet $vg_{m,i}\rest_{W(K)} \geq m$ for all $m,i$.
    In particular, there exists a sequence $(g_m) \subseteq K[X]_1$ such that $\tilde{v}(g_m) = 0$ and $\inf_{\xi \in W} \, vg_i(\xi) \geq m$ holds in $K$.
    Let $M = K^\cU \supseteq K$ be an ultra-power, and let $g \in M[X]_1$ be the limit of the sequence $(g_m)$.
    Then $\tilde{v}(g) = 0$ and $\inf_{\xi \in W} \, vg(\xi) \geq m$ for all $m$, i.e., $g$ vanishes on $W$.
    Forgetting the valuation, $M/K$ is an extension of algebraically closed fields.
    By quantifier elimination for those, there exists $h \in K[X]_1$ which vanishes on $W$, contradicting our hypothesis.

    We can now prove the converse in the general case.
    Indeed, we assume that $\eta$ is a uniform limit on $W(K)$ of functions $\eta_k = \min_{i < N_k} \ vg_{k,i}\rest_{W(K)}$.
    Say, in particular, that $|\eta - \eta_k| \leq 1$ for all $k$.
    Express each $\eta_k$ as $vf_k\rest_{W(K)}$ where $f_k \in L_k[X]_1$ and $\tilde{v}(f_k) \geq \inf \eta - M - 1$.
    Let $L = \prod L_k/\cU$ be an ultra-product, and $f \in L[X]_d$ the image of the sequence $(f_k)$ (for this to exist we require the common lower bound for $\tilde{v}(f_k)$).
    It follows that $\eta = vf\rest_{W(K)}$.
  \item
    If $f$ is linear, then $u(x) = v \circ f(x)$ is a valuation on $K^{n+1}$.
  \item[\impfirst]
    Dividing by $\ker u$ we may assume that $u$ is reduced.
    Then, up to a change of coordinates, it is arbitrarily close to $\tilde{v}$.
    In other words, $u(x)$ is arbitrarily close to $\min \ v \circ g_i(x)$ for a family $(g_i : i < n) \subseteq K[X]_1$.
    It follows that $\min_{i<n} \ vg_i\rest_{W(K)}$ is as close as desired to $\eta$.
  \end{cycprf*}
  Finally, let us prove that if $\eta$ is finite, then it is bounded.
  For this we may assume that $\eta = \min_{i<N} \ vg_i\rest_{W(K)}$ as above.
  Indeed, if $\sup \eta = \infty$, then there exist $\xi_m \in W(K)$ such that $vg_i(\xi_m) \geq m$ for all $i$.
  In an ultra-power $M$ we find $\xi \in W(M)$ on which all the $g_i$ vanish.
  Forgetting the valuation as above, we find $\xi \in W(K)$ on which all the $g_i$ vanish, so $\eta(\xi) = \infty$, a contradiction.
\end{proof}

In particular, for every homogeneous polynomial $f(X) \in L[X]$, the restriction of $vf$ to $\bP^n(K)$ is uniformly approximated by formulae with parameters in $K$, i.e., $vf\rest_{\bP^n(K)}$ is definable in $K$.
The Segre embedding allows us to replace a homogeneous polynomial in several families of indeterminates with one in a single family, and we conclude that the restriction to $K$ of any atomic formula over $L$ is definable in $K$, and the same follows for every formula.
It follows that the theory $ACMVF$ is stable.

Consider now $L,M \vDash MVF$, both embedded in a large valued field, along with a common sub-field $K \vDash ACMVF$.
We want to characterise when $L \ind_K M$, i.e., when the restriction of a formula over $L$ to $M$ is definable over $K$.
As above, it suffices to consider the case of a formula $vf$ where $f \in L[X]$ is homogeneous in a single family of indeterminates.
For this, we require one last tool.

\begin{dfn}
  \label{dfn:VectorSpaceValuationTensor}
  If $E$ and $F$ are two valued vector spaces then the \emph{tensor product valuation} on $E \otimes_K F$ as the least one satisfying $u(x \otimes y) \geq u_E(x) + u_F(y)$ for every simple tensor $x \otimes y$.
  It is sometimes denoted $u_E \otimes u_F$.
\end{dfn}

\begin{lem}
  \label{lem:VectorSpaceValuationTensor}
  With the hypotheses of \autoref{dfn:VectorSpaceValuationTensor}:
  \begin{enumerate}
  \item
    The tensor product valuation exists.
  \item
    We have $(u_E \otimes u_F)(x \otimes y) = u(x) + u(y)$ for all simple tensors, and more generally,
    \begin{gather*}
      (u_E \otimes u_F)(z) = \sup_{} \Bigl\{ \min \bigl( u_E(x_i) + u_F(y_i) \bigr) : z = \sum x_i \otimes y_i \Bigr\},
    \end{gather*}
    where the supremum is taken over all presentations of $z$ as a sum of simple tensors.
  \item
    \label{lem:VectorSpaceValuationTensorOneHat}
    When $F = K^m$ is equipped with $\tilde{v}$ then $E \otimes_K K^m = E^m$ and $u_E \otimes \tilde{v} = \tilde{u}_E$, i.e.:
    \begin{gather*}
      (u_E \otimes \tilde{v})(x_0,\ldots,x_{m-1}) = \min_i u_E(x_i).
    \end{gather*}
    (This is the \emph{direct sum valuation} on $E \oplus \cdots \oplus E$.)
  \item If $E_0 \subseteq E$ and $F_0 \subseteq F$ are sub-spaces, then
    \begin{gather*}
      (u_E\rest_{E_0}) \otimes (u_F\rest_{F_0}) = (u_E \otimes u_F)\rest_{E_0 \otimes_K F_0}.
    \end{gather*}
  \item We have $\ker (u_E \otimes u_F) = (\ker u_E) \otimes_K (\ker u_F)$.
  \end{enumerate}
\end{lem}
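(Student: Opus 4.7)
Set $\tau(z) := \sup\bigl\{\min_i(u_E(x_i) + u_F(y_i)) : z = \sum_i x_i \otimes y_i\bigr\}$, the supremum ranging over finite presentations of $z$ as a sum of simple tensors. Scalar homogeneity is immediate from rescaling one factor in each term, and the ultrametric inequality $\tau(z+z') \geq \tau(z) \wedge \tau(z')$ holds by concatenating $\varepsilon$-near-optimal decompositions of $z$ and $z'$ into one of $z + z'$. A one-term decomposition gives $\tau(x \otimes y) \geq u_E(x) + u_F(y)$, while any competing valuation $u$ satisfies $u(z) \geq \min_i u(x_i \otimes y_i) \geq \min_i(u_E(x_i) + u_F(y_i))$ for every decomposition, hence $u \geq \tau$. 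Thus $\tau$ is the tensor product valuation, settling (1) and the general formula in (2) simultaneously.

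Part (3) is a direct coordinate verification: identifying $E \otimes_K K^m$ with $E^m$ via $x \otimes (a_0,\ldots,a_{m-1}) \mapsto (a_0 x, \ldots, a_{m-1} x)$ and writing $\tilde{u}_E(z) := \min_j u_E(z_j)$, the decomposition $z = \sum_j z_j \otimes e_j$ attains $\tilde{u}_E(z)$, so $u_E \otimes \tilde{v} \geq \tilde{u}_E$; conversely, any decomposition $z = \sum_i x_i \otimes y_i$ yields $z_j = \sum_i y_{i,j} x_i$, hence $u_E(z_j) \geq \min_i(v(y_{i,j}) + u_E(x_i))$, so $\tilde{u}_E(z) \geq \min_i(\tilde{v}(y_i) + u_E(x_i))$, and the supremum gives $\tilde{u}_E \geq u_E \otimes \tilde{v}$.

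The essential step is the simple-tensor equality in (2), where only $\tau(x \otimes y) \leq u_E(x) + u_F(y)$ is nontrivial. Given a presentation $x \otimes y = \sum_i x_i \otimes y_i$, I plan to restrict to the finite-dimensional subspaces $E' := \Span\{x, x_i\}$ and $F' := \Span\{y, y_i\}$, quotient by the kernels of the restricted valuations, and apply \autoref{lem:VectorSpaceValuationBasis} to obtain, for any $\varepsilon > 0$, triangular changes of basis making the restricted valuations $\varepsilon$-close to $\tilde{v}$ on $K^m$ and $K^n$ respectively. Two applications of (3) then identify $\tilde{v} \otimes \tilde{v}$ with $\tilde{v}$ on $K^{mn}$ in the monomial basis $e_i \otimes f_j$, where the identity $\tilde{v}(x \otimes y) = \tilde{v}(x) + \tilde{v}(y)$ is immediate; propagating the $\varepsilon$-errors and letting $\varepsilon \to 0$ closes the argument.

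Parts (4) and (5) follow the same template. For (4), minimality together with (2) gives the easy inclusion $(u_E|_{E_0}) \otimes (u_F|_{F_0}) \leq (u_E \otimes u_F)|_{E_0 \otimes_K F_0}$; the reverse---that decompositions in the ambient $E \otimes F$ cannot improve on those confined to $E_0 \otimes F_0$---is the main obstacle I anticipate, and I would tackle it by extending $\tilde{v}$-adapted bases of $E_0$ and $F_0$ to similarly adapted bases of $E$ and $F$ via \autoref{lem:VectorSpaceValuationBasis}, reducing to a coordinatewise computation as in (3). For (5), the inclusion $(\ker u_E) \otimes_K (\ker u_F) \subseteq \ker(u_E \otimes u_F)$ is immediate from the tensor inequality on simple tensors, and the reverse again reduces to the finite-dimensional approximation: once the relevant restricted valuations are made reduced and $\varepsilon$-close to $\tilde{v}$, the tensor valuation is $O(\varepsilon)$-close to $\tilde{v}$ on $K^{mn}$, hence itself reduced.
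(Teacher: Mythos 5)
Your direct construction of the candidate $\tau$ via the supremum over finite decompositions is cleaner and more self-contained than the paper's argument. You verify in general that $\tau$ is a valuation, that it satisfies $\tau(x \otimes y) \geq u_E(x) + u_F(y)$, and that every competitor dominates it, which settles (1) and the general formula in (2) in one stroke; your part (3) is likewise a two-sided coordinate computation valid as stated. The paper instead first treats the special case $E = K^m$, $F = K^n$ both equipped with $\tilde{v}$ (where all claims are checked by hand), lifts to arbitrary reduced valuations via the triangular approximation of \autoref{lem:VectorSpaceValuationBasis}, and finally handles the general case by quotienting by the kernels, so that every part of the lemma passes through the same three-step reduction. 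For the genuinely hard inequality $\tau(x \otimes y) \leq u_E(x) + u_F(y)$ and for part (4) your plan converges back onto essentially this reduction; the one point you should make explicit is that the quotient valuation on $E'/\ker(u_E\rest_{E'})$ \emph{equals} $u_E$ (not merely dominates it) on nonkernel elements, since $u_E(z+k) = u_E(z)$ whenever $u_E(k) = \infty$, so that passing to the quotient loses no information about the terms of the decomposition.

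Your sketch of (5), however, has a real gap, and in fact exposes an error in the statement itself, which you should have caught while working it out. The reduction you describe (divide by the kernels, observe that the tensor of reduced valuations is reduced, and pull back) yields
\begin{gather*}
  \ker(u_E \otimes u_F) = (\ker u_E) \otimes_K F + E \otimes_K (\ker u_F),
\end{gather*}
namely the kernel of the canonical surjection $E \otimes_K F \rightarrow (E/\ker u_E) \otimes_K (F/\ker u_F)$, which is in general strictly larger than $(\ker u_E) \otimes_K (\ker u_F)$. And the stated equality really does fail: take $x \in \ker u_E \setminus \{0\}$ and $y \notin \ker u_F$. Then $(u_E \otimes u_F)(x \otimes y) \geq u_E(x) + u_F(y) = \infty$, so $x \otimes y \in \ker(u_E \otimes u_F)$, yet $x \otimes y \notin (\ker u_E) \otimes_K (\ker u_F)$ since its second factor is not in $\ker u_F$. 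Your phrase ``hence itself reduced'' is correct about $\bar u_E \otimes \bar u_F$ on the quotient, but the preimage of zero under the quotient map is the sum displayed above, so the step from ``reduced on the quotient'' to the claimed kernel formula does not go through.
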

\begin{proof}
  Let us consider first the special case where $E = K^m$ and $F = K^n$ are equipped with $\tilde{v}$, and $E_0 = K^{m_0} \times \{0\}$, $F_0 = K^{n_0} \times \{0\}$.
  Identifying $E \otimes_K F$ with $K^{mn}$ we have $\tilde{v}(x \otimes y) = \tilde{v}(x) + \tilde{v}(y)$, and $\tilde{v}$ on $K^{mn}$ is least satisfying $\tilde{v}(x_i \otimes y_j) \geq 0$ for $x_i$ and $y_j$ in the respective standard bases, so $\tilde{v} = \tilde{v} \otimes \tilde{v}$ (on the appropriate spaces).
  In this case, all our assertions are easy to check.
  The case of any two reduced valuations follows by \autoref{lem:VectorSpaceValuationBasis} (since the change of basis is by a triangular matrix, we reduce to the special forms of $E_0$ and $F_0$ assumed earlier).
  In the general case, we divide by the kernels.
\end{proof}

If $L$ and $M$ are field extensions of $K$, then $L \otimes_K M$ is a ring, and it is immediate to check that $u(cd) \geq u(c) + u(d)$ for all $c,d \in L \otimes_K M$.
The main theorem of \cite{BenYaacov:ValuedFieldTensorProduct} asserts that, when $K \vDash ACMVF$, we have $u(cd) = u(c) + u(d)$, giving rise to a natural valuation $v(c/d) = u(c) - u(d)$ on $\Frac(L \otimes_K M)$.
(It is proved using quantifier elimination for $ACVF$, in Boolean logic, but can also be proved using quantifier elimination in $ACMVF$, as well as via other methods, as in Poineau \cite{Poineau:Angelique}.)

\begin{prp}
  \label{prp:ValuedFieldIndependence}
  Let $K,L,M \vDash ACMVF$, where both $L$ and $M$ are embedded in some large valued field and $K$ is a common sub-field.
  Then the following are equivalent:
  \begin{enumerate}
  \item The compositum $LM$ is $\Frac(L \otimes_K M)$ (as a valued field).
  \item For every homogeneous polynomial $f \in M[X]$, the restriction of $vf$ to $L$ is definable with parameters in $K$.
  \end{enumerate}
\end{prp}
\begin{proof}
  In one direction, we may assume, as in the proof of \autoref{prp:PolynomialTrace}, that $f(X)$ is linear.
  We let $u_1(x) = v \circ f(x)$ for $x = K^N$, and similarly $u_2(x)$ for $x \in L^N$.

  Let $x \in L^N$ be given.
  Let $E \subseteq L$ be the $K$-vector space generated by the coefficients of $x$, and let $(y_i : i < \ell)$ be a basis for $E$.
  Assume first that $(y_i)$ induces an isomorphism between $E$ and $(K^\ell,\tilde{v})$, and express $x = \sum y_i x_i$, where $x_i \in K^N$.
  Then $f(x) = \sum y_i f(x_i) \in L \otimes_K M$, and by \autoref{lem:VectorSpaceValuationTensor}\autoref{lem:VectorSpaceValuationTensorOneHat} we have
  \begin{gather*}
    (v_L \otimes u_1)(x)
    = \min \, u_1(x_i)
    = \min \, v_M\bigl( f(x_i) \bigr)
    = (v_L \otimes v_M)\bigl( f(x) \bigr)
    = u_2(x).
  \end{gather*}
  We reduce the general case to this special one (approximately) by \autoref{lem:VectorSpaceValuationBasis}, so $u_2 = v_L \otimes u_1$.

  In particular, we may now reduce to the case where $u_1$ is a valuation.
  We then have a family of linear functions $g_i$ over $K$ such that $u_1(x)$ is arbitrarily close to $\min v \circ g_i(x) = \tilde{v}_K \bigl( g_i(x) : i < N \bigr)$ for $x \in K^N$.
  Then, for $x \in L^N$, $u_2(x)$ is as close to $\tilde{v}_L \bigl( g_i(x) : i < N \bigr)$, i.e., to $\min v \circ g_i(x)$ again.
  Thus $vf$ restricted to $L$ is definable with parameters in $K$.

  For the converse, let us compare the compositum $LM$ (in the ambient valued field) with $\Frac(L \otimes_K M)$.
  Consider a homogeneous polynomial $f \in M[X]$.
  The restriction of $vf$ to $\bP^n(K)$ is definable, in $K$ (with parameters there), say by $\varphi(\xi)$.
  In both $LM$ and $\Frac(L \otimes_K M)$, the restriction of $vf$ to $\bP^n(L)$ is definable by formulae with parameters in $K$, say $\psi_1(\xi)$ and $\psi_2(\xi)$.
  But then $\sup_\xi \bigl| \varphi(\xi) - \psi_i(\xi) \bigr|$ is a formula with parameters in $K$, evaluating to zero.
  Therefore it must also evaluate to zero in $L$ (we are making a non-trivial use of quantifier elimination here), so $\psi_1(\xi) = \psi_2(\xi)$ for all $\xi \in \bP^n(L)$.
  In other words, for $\xi \in \bP^n(L)$, $vf(\xi)$ is the same in $LM$ and in $\Frac(L \otimes_K M)$.
  It follows that the valuation on $LM$ is the one induced from $\Frac(L \otimes_K M)$, as stated.
\end{proof}

As in the case of pure fields, we can reduce from arbitrary formulae to ones of the form $vf(\xi)$.
We concede that $LM \cong \Frac(L \otimes_K M)$ as valued fields if and only if the restriction of every formula over $L$ to $M$ is definable over $K$, i.e., if and only if $L \ind_K M$ in the sense of model-theoretic stability.
Let us conclude with a technical result which will be used later on.

\begin{lem}
  \label{lem:ValuationGenericIndependent}
  Assume that $L \ind_K M$, where $K, L, M \vDash ACMVF$, and let $a \in L^m$ be (weakly) $v$-generic over $K$.
  Then it is also (weakly) $v$-generic over $M$.
\end{lem}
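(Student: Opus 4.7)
The plan is to reinterpret the evaluation map $h \mapsto h(a)$, at each fixed degree, as a restriction of the multiplication map $M \otimes_K L \to LM$, which by \autoref{prp:ValuedFieldIndependence} is an isometry onto $\Frac(L \otimes_K M)$ under the hypothesis $L \ind_K M$, and then transfer the weak $v$-genericness of $a$ over $K$ across this isometry using the tensor product machinery of \autoref{lem:VectorSpaceValuationTensor}.

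Concretely, fix a degree $d$ and set $V = K[X]_d$ with its canonical valuation $\tilde{v}$. The $K$-linear evaluation $\phi\colon V \to L$, $h \mapsto h(a)$, produces a pullback valuation $u_V = v \circ \phi$ on $V$; the hypothesis reads exactly as $u_V \leq \tilde{v}$, with equality in the $v$-generic case, and forces $\phi$ to be injective since $\tilde{v}$ is finite on nonzero polynomials. Hence $\phi$ induces an isometric isomorphism of $(V, u_V)$ onto $(W, v_L\rest_W)$, where $W = \phi(V) \subseteq L$ is the $K$-span of the monomials $a^\alpha$, $|\alpha| = d$.

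Next I tensor with $M$. Under the monomial basis, \autoref{lem:VectorSpaceValuationTensor}\autoref{lem:VectorSpaceValuationTensorOneHat} identifies $(M \otimes_K V, v_M \otimes \tilde{v})$ with $(M[X]_d, \tilde{v})$. The explicit sup-min formula of part~(2) makes the assignment $u \mapsto v_M \otimes u$ pointwise monotone, so $v_M \otimes u_V \leq v_M \otimes \tilde{v} = \tilde{v}$ on $M[X]_d$, with equality in the $v$-generic case. Transporting along the isometry $\mathrm{id}_M \otimes \phi\colon M \otimes_K V \to M \otimes_K W$ and recognising, via part~(4), the tensor valuation $v_M \otimes (v_L\rest_W)$ as the restriction of $v_M \otimes v_L$, I obtain $(v_M \otimes v_L)(\sum_\alpha c_\alpha \otimes a^\alpha) \leq \tilde{v}(h)$ for every $h = \sum_\alpha c_\alpha X^\alpha \in M[X]_d$, again with equality in the $v$-generic case. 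Pushing $\sum_\alpha c_\alpha \otimes a^\alpha$ through the multiplication map $M \otimes_K L \to LM$, which by \autoref{prp:ValuedFieldIndependence} is an isometry onto $\Frac(L \otimes_K M) \cong LM$, lands at $h(a)$ and yields $v(h(a)) \leq \tilde{v}(h)$, with equality in the $v$-generic case; this is exactly the desired conclusion. The main obstacle is the bookkeeping of the functorial identifications across $V$, $W$, $M \otimes_K V$, $M \otimes_K W$, and $M \otimes_K L$; the monotonicity observation that drives the argument is transparent from the explicit formula of \autoref{lem:VectorSpaceValuationTensor}.
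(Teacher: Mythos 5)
Your proof is correct, and it takes a genuinely different route from the paper's. The paper's proof invokes the stability characterisation of independence: it packages the degree-$d$ monomials in $a$ into a linear form $f$ over $L$, takes the formula $\varphi$ over $K$ defining the restriction of $vf$ to $\bP(K)$, observes that since $L \ind_K M$ this same formula defines the restriction of $vf$ to $\bP(M)$, and transfers the inequality $\sup_\xi \varphi(\xi) \leq 0$ from $K$ to $M$. You instead carry out what the paper's opening remark calls the ``direct calculation'': you identify the evaluation $h \mapsto h(a)$ on $K[X]_d$ as an isometry of $(K[X]_d, u_V)$ onto a finite-dimensional subspace $W \subseteq L$, tensor with $M$ using the explicit formula of \autoref{lem:VectorSpaceValuationTensor} (whose pointwise monotonicity in each factor is exactly what propagates $u_V \leq \tilde{v}$ to $v_M \otimes u_V \leq \tilde{v}$ on $M[X]_d$), and then use the first equivalent of \autoref{prp:ValuedFieldIndependence} -- namely that $L \ind_K M$ makes the multiplication $L \otimes_K M \to LM$ an isometry onto $\Frac(L \otimes_K M)$ -- to read off $v(h(a)) \leq \tilde{v}(h)$ for $h \in M[X]_d$, with equality when $a$ is $v$-generic. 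Your route is more concrete and stays entirely inside valued linear algebra, at the cost of more bookkeeping; the paper's route is shorter once the stability apparatus is in place. Both proofs lean on \autoref{prp:ValuedFieldIndependence}, just on different halves of the equivalence stated there.
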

\begin{proof}
  This can be calculated directly, but let us give an argument using stability.
  So fix a degree $d$ and let $b \in L^N$ consist of all monomials of degree $d$ in $a$.
  Let $f(X) = \sum b_i X_i \in L[X]_1$, and let $\varphi(\xi)$ be the formula, with parameters in $K$, defining the restriction of $vf$ to $\bP(K)$.
  Since $L \ind_K M$, it also defines the restriction of $vf$ to $\bP(M)$.
  By hypothesis we have $\sup_\xi \varphi(\xi) \leq 0$ in $K$, so also in $M$.
  Doing this for all $d$, we see that $a$ is weakly $v$-generic over $M$.
  It is then $v$-generic over either field if and only if, in addition, $\tilde{v}(a) = 0$.
\end{proof}

\section{Virtual divisors and virtual chains}
\label{sec:VirtualDivisors}

Throughout, $K \vDash ACMVF$.

\begin{ntn}
  \label{ntn:Hat}
  If $f \in K[X]_m$, with $m \geq 1$, then for $\xi = [x] \in \bP^n(K)$ we write
  \begin{gather*}
    \hat{f}(\xi) = \frac{vf(\xi)}{m} = \frac{v \circ f(x)}{m} - \tilde{v}(x).
  \end{gather*}
\end{ntn}

\begin{dfn}
  \label{dfn:VirtualDivisor}
  Let $W(K) \subseteq \bP^n(K)$ be an algebraic set defined over $K$.
  A \emph{virtual divisor} of degree $d$ on $W(K)$ is a function $\eta\colon W(K) \rightarrow \bR$ which can be expressed as $\hat{f}\rest_{W(K)}$ for some extension $L/K$ and $f \in L[X]_d$.
  A \emph{virtual divisor} on $W(K)$ is a uniform limit of virtual divisors (of various degrees), i.e., in the distance:
  \begin{gather*}
    d(\eta,\theta) = \sup_{\xi \in W(K)} \bigl| \eta(\xi) - \theta(\xi) \bigr|.
  \end{gather*}
  When $\eta$ is a virtual divisor of degree $d$, we define its \emph{$d$-width} as
  \begin{gather*}
    w_d(\eta) = \sup \eta - \sup \bigl\{ \tilde{v}(f)/d : f \in L[X]_d, \ \eta = \hat{f}\rest_{W(K)} \bigr\},
  \end{gather*}
  as $L$ varies over all possible extensions of $K$.
\end{dfn}

By \autoref{prp:PolynomialTrace}, every virtual divisor is a bounded function, definable in the sense of \autoref{sec:Definability}.
The space of virtual divisors on $W(K)$ of a given degree is complete for uniform convergence.
Also, by an easy ultra-product argument, the second supremum in the definition of $w_d(\eta)$ is attained as a maximum.

In some sense, all virtual divisors are normalised to have ``degree one'' (one can imagine a more general definition, bur for our purposes only normalised virtual divisors are needed).
The degree of a virtual divisor, as per \autoref{dfn:VirtualDivisor}, measures its \emph{complexity}.
We have $\hat{f} = \widehat{f^m}$ so a virtual divisor of degree $d$, is also of degree $m d$ for every $m \geq 1$.

If $\eta$ is a virtual divisor on $W(K)$ and $L/K$ is an extension, then $\eta$ extends naturally to a virtual divisor on $W(L)$, by applying the same definition.
Alternatively, if $\eta = \hat{f}\rest_{W(K)}$ for $f \in M[X]_d$, then we may identify $f$ with a polynomial over $\Frac(L \otimes_K M)$, thus obtaining an extension of $\eta$ to $\hat{f}\rest_{W(L)}$.
By \autoref{prp:ValuedFieldIndependence}, we obtain the same extension either way.
Therefore, we may speak of a \emph{virtual divisor on $W$} which is \emph{defined over $K$}, which is the terminology we shall use from here on.
By quantifier elimination, if $\eta$ and $\theta$ are virtual divisors on $W$, say defined over $K$, then $d(\eta,\theta)$ is the same when calculated in $W(K)$ or in $W(L)$ for any extension $L/K$.

\begin{lem}
  \label{lem:VirtualDivisorOperations}
  Let $W \subseteq \bP^n$ be algebraic, defined over $K$.
  Then every constant function is a virtual divisor on $W$, of degree one.
  The family of virtual divisors on $W$ is closed under translation (by a real), minimum (of finitely many), uniform limit, and finite convex combinations.
  Moreover, all but convex combinations preserve the degree.
\end{lem}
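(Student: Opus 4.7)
My plan is to treat each of the operations separately, relying throughout on the description of degree-$d$ virtual divisors via polynomials and on the $v$-generic elements provided by \autoref{lem:ValuationGeneric}. Degree preservation will be immediate from every construction except the convex-combination one, which goes through a product of polynomials.

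For \emph{constants}, I would adjoin a $v$-generic tuple $(a_0, \ldots, a_n)$ over $K$ and set $f = \sum a_i X_i$. For $[x] \in \bP^n(K)$, applying the definition of $v$-generic to $h(Y) = \sum x_i Y_i \in K[Y]_1$ gives $v\bigl( f(x) \bigr) = v\bigl( h(a) \bigr) = \tilde{v}(h) = \tilde{v}(x)$, so $\hat{f} \equiv 0$ on $\bP^n(K)$, and multiplying $f$ by some $b$ with $v(b) = c$ in a further extension yields the constant $c$ as a degree-one virtual divisor. \emph{Translation} by $c$ of a degree-$d$ virtual divisor $\hat{f}\rest_{W(K)}$ follows the same scaling pattern: replace $f$ by $bf$ with $v(b) = cd$, keeping the degree unchanged.

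The \emph{minimum} case is where I expect the main obstacle to lie, since the ultrametric inequality $v(\alpha + \beta) \geq \min\bigl( v(\alpha), v(\beta) \bigr)$ is typically strict and a naive sum of polynomials therefore does not realise the pointwise minimum of their valuations. I would circumvent this by first bringing two representatives $f_1, f_2$ to a common degree $d$ via the identity $\hat{f} = \widehat{f^m}$, then adjoining a $v$-generic pair $(a_1, a_2)$ over the field of coefficients and forming $a_1 f_1 + a_2 f_2$. For each $[x] \in W(K)$, $v$-genericity applied to the linear polynomial $f_1(x) Y_1 + f_2(x) Y_2$ gives $v\bigl( a_1 f_1(x) + a_2 f_2(x) \bigr) = \min\bigl( v(f_1(x)), v(f_2(x)) \bigr)$, whence $\widehat{a_1 f_1 + a_2 f_2} = \min(\hat{f}_1, \hat{f}_2)$, still of degree $d$; an induction then handles the minimum of finitely many.

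For \emph{uniform limits}, \autoref{prp:PolynomialTrace} already identifies the degree-$d$ virtual divisors on $W(K)$ with the uniform closure of $\bigl\{ \frac{1}{d} \min_{i<N} vg_i\rest_{W(K)} : g_i \in K[X]_d \bigr\}$, which is tautologically uniformly closed; closure of the full (all-degree) space is then immediate from the definition. For \emph{convex combinations}, the rational case $t = p/(p+q)$ comes from the identity $\widehat{f_1^p f_2^q} = (p \hat{f}_1 + q \hat{f}_2)/(p+q)$, valid when $f_1, f_2$ share a common degree $d$, with the product having degree $(p+q)d$ (which is precisely why the degree is not in general preserved); a general $t \in [0,1]$ is then handled by rational approximation, uniform because the $\eta_i$ are bounded, combined with the closure under uniform limits established just before.
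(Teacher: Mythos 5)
Your proof is correct and follows essentially the same approach as the paper, just filling in details the paper waves off as "clear from the definition": for the minimum you make explicit use of $v$-generic coefficients where the paper implicitly invokes \autoref{prp:PolynomialTrace}, for the constant $0$ you use a $v$-generic linear form where the paper writes it as $\min_i \widehat{X}_i\rest_{W(K)}$, and your convex-combination argument (common degree, the identity $\widehat{f_1^p f_2^q} = (p\hat f_1 + q\hat f_2)/(p+q)$, then rational approximation using boundedness and closure under uniform limits) matches the paper's $h = f^{\deg g} g^{\deg f}$ trick.
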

\begin{proof}
  Closure under translation, minimum and uniform limit is clear from the definition, and the constant $0$ is $\min_{i<n} \widehat{X}_i\rest_{W(K)}$.
  We can calculate averages as
  \begin{gather*}
    \frac{\hat{f}\rest_{W(K)} + \hat{g}\rest_{W(K)}}{2} = \hat{h}\rest_{W(K)}, \qquad h = f^{\deg g} g^{\deg f}.
  \end{gather*}
  Closure under convex combinations follows by closure under uniform limits and boundedness.
\end{proof}

\begin{lem}
  \label{lem:VirtualDivisorDefinitionInequality}
  Let $L/K$ be an extension, and let $\eta = \hat{f}\rest_{W(K)}$, where $f \in L[X]_d$, be a virtual divisor.
  Then for every $\xi \in W(L)$ we have
  \begin{gather*}
    \eta(\xi) \leq \hat{f}(\xi).
  \end{gather*}
\end{lem}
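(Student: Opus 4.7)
The plan is to trace the definition of $\eta$ on $W(L)$ through the tensor-product construction of the paragraph preceding the lemma, and to compare it with the direct evaluation $\hat f(\xi)$ inside $L$. Take $M \subseteq L$ to be the $K$-subfield generated by the coefficients of $f$ (or simply $M = L$). By that paragraph and \autoref{prp:ValuedFieldIndependence}, the extension of $\eta$ to $W(L)$ at a point $\xi = [x] \in W(L)$ is by definition $\hat f(\xi)$ computed inside the valued field $N := \Frac(L \otimes_K M)$, whose valuation is the one induced by the tensor-product valuation $v_L \otimes v_M$ on $L \otimes_K M$ from \autoref{dfn:VectorSpaceValuationTensor}. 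The $\hat f(\xi)$ on the right-hand side of the lemma, on the other hand, is to be computed inside $L$. The content of the statement is thus the comparison of these two a priori distinct evaluations.

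Writing $f = \sum_\alpha c_\alpha X^\alpha$ with $c_\alpha \in M$, in $N$ the element $f(x)$ unpacks as $F := \sum_\alpha x^\alpha \otimes c_\alpha \in L \otimes_K M$, whereas in $L$ it is $f(x) = \sum_\alpha c_\alpha x^\alpha$. The two are related by the multiplication map $\mu\colon L \otimes_K M \to L$, $a \otimes b \mapsto ab$, which is well defined as a ring homomorphism (since $M \subseteq L$) and sends $F$ to $f(x)$.

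The key observation is that $\mu$ does not decrease the tensor-product valuation. For any presentation $z = \sum_i a_i \otimes b_i$ of an element $z \in L \otimes_K M$ we have
\begin{gather*}
  v_L\bigl( \mu(z) \bigr) = v_L\Bigl( \sum_i a_i b_i \Bigr) \geq \min_i \bigl( v_L(a_i) + v_M(b_i) \bigr),
\end{gather*}
so taking the supremum over all such presentations yields $v_L\bigl(\mu(z)\bigr) \geq (v_L \otimes v_M)(z)$ by the supremum formula in \autoref{lem:VectorSpaceValuationTensor}. Applying this with $z = F$, and noting that $\tilde v(x)$ is the same in $L$ and in $N$ because the embedding $L \hookrightarrow N$, $a \mapsto a \otimes 1$, is an isometry, we divide by $d$ and subtract $\tilde v(x)$ to obtain $\hat f(\xi) \geq \eta(\xi)$.

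The main conceptual hurdle is to keep the two interpretations of $\hat f(\xi)$ apart: the inequality is not a tautology, since the canonical $\eta$ on $W(L)$ and the literal $L$-evaluation of $\hat f$ genuinely differ in general. For instance, at a zero of $f$ lying in $W(L) \setminus W(K)$ one has $\hat f(\xi) = \infty$ while $\eta(\xi)$ may well be finite.
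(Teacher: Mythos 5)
Your proof is correct and takes essentially the same route as the paper: both identify $\eta(\xi)$ with the evaluation of $f(x)$ in $\Frac(L \otimes_K M)$ under the tensor-product valuation, and compare it with the direct evaluation in $L$ via a ring morphism $L \otimes_K M \to L$ (your multiplication map, the paper's natural morphism $\varphi$ after taking $M$ to be a disjoint copy of $L$), using the non-decreasing property of that morphism with respect to the valuations. The only cosmetic difference is that the paper spins up an abstract copy $M \cong L$ where you directly take $M \subseteq L$, which is justified by the paragraph preceding \autoref{lem:VirtualDivisorOperations} and \autoref{prp:ValuedFieldIndependence}, as you note; the key inequality $v_L(\mu(z)) \geq (v_L \otimes v_M)(z)$ and the conclusion are identical.
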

\begin{proof}
  Let $M$ be a copy of $L$, let $g \in M[X]_d$ be the corresponding copy of $f$, and let us embed $L$ and $M$ in $\Frac(L \otimes_K M)$.
  There is a natural morphism $\varphi\colon L \otimes_K M \rightarrow L$, and by definition of the tensor product valuation, it satisfies $v_L\bigl( \varphi(z) \bigr) \geq (v_L \otimes v_M)(z)$.
  For $\xi = [x] \in W(L)$ we have $\eta(\xi) = \hat{g}(\xi)$ and $\varphi\bigl( g(x) \bigr) = f(x)$.
  Our assertion follows.
\end{proof}

\begin{dfn}
  \label{dfn:ChowFormDistance}
  Let $L/K$ be a valued field extension, and let $\fC$ and $\fC'$ be two Chow forms over $L$, both in same dimension $\ell$.
  We define the $K$-distance between $\fC$ and $\fC'$ as
  \begin{gather*}
    d_K(\fC,\fC') = \sup_F \left| \frac{v( F \wedge \fC )}{\deg F \deg \fC} - \frac{v( F \wedge \fC' )}{\deg F \deg \fC'} \right|,
  \end{gather*}
  as $F$ varies over all families of $(\ell+1)$ non-constant homogeneous polynomials over $K$, agreeing that $|\infty - \infty| = 0$.
\end{dfn}

From this point onward we are going to consider chains in projective space as coded by Chow forms.
We shall be using the wedge notation for the algebraic intersection of Chow forms with hypersurfaces, as in \cite{BenYaacov:Vandermonde}.

\begin{lem}
  \label{lem:ChowFormDistance}
  Let $L/K$ and $M/K$ be two valued field extensions, and let $W \subseteq \bP^n$ be defined over $K$
  Let $f$ and $g$ be two homogeneous non-constant polynomials over $L$, and let $\fC$ and $\fC'$ be two Chow forms over $L$, both in same dimension $\ell$, associated to subsets of $W$.
  Then, working in the free amalgam $\Frac(L \otimes_K M)$, we have
  \begin{gather*}
    d_K(f \wedge \fC, g \wedge \fC') \leq d(\hat{f}\rest_{W(K)},\hat{g}\rest_{W(K)}) + d_K(\fC,\fC').
  \end{gather*}
\end{lem}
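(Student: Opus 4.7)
The plan is to proceed by the triangle inequality with the intermediate quantity $\frac{v(F \wedge g \wedge \fC)}{\deg F \deg g \deg \fC}$, splitting the left-hand side (for each family $F = (F_1, \dots, F_\ell)$ of non-constant homogeneous polynomials over $K$) into a ``polynomial change'' term (A), from $f$ to $g$ at the fixed Chow form $\fC$, and a ``Chow form change'' term (B), from $\fC$ to $\fC'$ at the fixed polynomial $g$. Both have to be bounded uniformly in $F$.

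For (A), I would interpret $F \wedge \fC$ as an effective $0$-chain supported in $W$ (using that $\fC$ is associated to a subset of $W$), so that $\frac{v(h \wedge F \wedge \fC)}{\deg h \deg F \deg \fC}$ is the weighted average of $\hat h$ against that $0$-chain. Hence $|(A)| \leq \sup_{\xi \in W(\overline L)} |\hat f(\xi) - \hat g(\xi)|$, and by \autoref{fct:QuantifierEliminationACMVF} together with \autoref{prp:ValuedFieldIndependence}, this sup, computed in any extension, agrees with $d(\hat f \rest_{W(K)}, \hat g \rest_{W(K)})$.

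For (B), the expected bound is $|(B)| \leq d_K(\fC, \fC')$: if $g$ were over $K$, this would follow immediately from \autoref{dfn:ChowFormDistance} applied to the augmented $K$-family $(g, F_1, \dots, F_\ell)$ of $\ell+1$ polynomials. To handle $g \in L[X]$, I would approximate: by \autoref{prp:PolynomialTrace}, for every $\varepsilon > 0$ there exist $g_1, \dots, g_N \in K[X]_{\deg g}$ with $|\hat g - \min_i \hat g_i| \leq \varepsilon$ on $W(K)$. Using \autoref{lem:ValuationGeneric} I adjoin $v$-generic scalars $a_i$ over $L$, remaining $v$-generic over the free amalgam by \autoref{lem:ValuationGenericIndependent}, and form $g^* = \sum a_i g_i$. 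Multilinearity of wedge in the polynomial slot together with $v$-genericity then gives $v(g^* \wedge F \wedge \fC) = \min_i v(g_i \wedge F \wedge \fC)$, and similarly for $\fC'$. Applying \autoref{dfn:ChowFormDistance} to each $K$-family $(g_i, F)$ and combining via the elementary inequality $|\min u_i - \min u'_i| \leq \max_i |u_i - u'_i|$, the version of (B) with $g^*$ in place of $g$ is bounded by $d_K(\fC, \fC')$.

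The main obstacle is reconciling $g$ and $g^*$: the error is controlled (via the averaging interpretation of (A)) by $\sup_W |\hat g - \hat{g^*}|$, but the $\varepsilon$-approximation was only enforced on $W(K)$, while the $0$-chains $F \wedge \fC$ and $F \wedge \fC'$ may carry mass at points of $W(\overline L) \setminus W(K)$, and \autoref{lem:VirtualDivisorDefinitionInequality} only yields one-sided control $\hat g \geq $ (virtual-divisor extension). I would address this by propagating the $\varepsilon$-approximation to the virtual-divisor extensions of $\hat g \rest_{W(K)}$ and $\hat{g^*} \rest_{W(K)}$ on $W$ in the free amalgam (via \autoref{prp:PolynomialTrace} and \autoref{prp:ValuedFieldIndependence}), and then arguing that the weighted average of $\hat g$ against $F \wedge \fC$ agrees, up to $O(\varepsilon)$, with the weighted average of that virtual-divisor extension. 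Letting $\varepsilon \to 0$ would then yield the claimed inequality.
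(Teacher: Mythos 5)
Your decomposition via the intermediate quantity at $g\wedge\fC$ matches the paper's, and your treatment of (A) — interpret $F\wedge\fC$ as an effective $0$-chain supported in $W$, view the quantity as a weighted average of $\hat h$ over its points, and then invoke \autoref{prp:ValuedFieldIndependence} to identify the sup in the amalgam with $d(\hat f\rest_{W(K)},\hat g\rest_{W(K)})$ — is essentially the paper's argument for the first half.

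For (B), however, the paper makes a short observation that you miss, and your substitute has a gap you yourself half-identify. The paper's move is: the resultant form $F\mapsto F\wedge\fC$ is a homogeneous polynomial in the coefficients of the $\ell+1$ indeterminate polynomials, so by the same stability/definability reasoning used for (A), the supremum defining $d_K(\fC,\fC')$ does not change if the polynomials $F$ are allowed to have coefficients in $L$ rather than $K$; that is, $d_K(\fC,\fC') = d_L(\fC,\fC')$. Once you have this, (B) is immediate: $(g, F_1,\ldots,F_\ell)$ is an $(\ell+1)$-family over $L$, so the corresponding term is directly bounded by $d_L(\fC,\fC') = d_K(\fC,\fC')$. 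No approximation of $g$ by $K$-polynomials and no adjoining of $v$-generics is needed at all. Your route instead replaces $g$ by $g^*=\sum a_i g_i$ with $g_i\in K[X]$, and then — as you note — the $\varepsilon$-closeness of $\hat g$ and $\hat{g^*}$ is only controlled on $W(K)$, while the points of $F\wedge\fC$ lie over $\overline L$, where $\hat g(\xi)$ can strictly exceed the definable extension of $\hat g\rest_{W(K)}$ (\autoref{lem:VirtualDivisorDefinitionInequality} gives only $\geq$, and the one-sidedness goes the wrong way for one of the two terms in the difference). Your proposed repair — ``propagating the $\varepsilon$-approximation to the virtual-divisor extensions and arguing the weighted average of $\hat g$ agrees with that of its definable extension'' — is exactly the claim that needs proof, and it does not follow from the cited lemmas since $L$ and the field generated by the points of the $0$-chain are not independent over $K$. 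The $d_K = d_L$ observation sidesteps this entirely; without it your proof of (B) is incomplete.
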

\begin{proof}
  It will suffice to prove that
  \begin{gather*}
    d_K(f \wedge \fC, g \wedge \fC) \leq d(\hat{f}\rest_{W(K)},\hat{g}\rest_{W(K)}),
    \qquad
    d_K(f \wedge \fC, f \wedge \fC') \leq d_K(\fC,\fC').
  \end{gather*}
  We have already observed that, by \autoref{prp:ValuedFieldIndependence}:
  \begin{gather*}
    d(\hat{f}\rest_{W(K)},\hat{g}\rest_{W(K)}) = d(\hat{f}\rest_{W(M)},\hat{g}\rest_{W(M)}).
  \end{gather*}
  The resultant form (in some degrees) $F \mapsto F \wedge \fC$ is just a homogeneous polynomial in the coefficients of $\ell+1$ indeterminate polynomials (in those degrees), so by the same reasoning:
  \begin{gather*}
    d_K(\fC,\fC') = d_L(\fC,\fC').
  \end{gather*}
  If $H$ is any family of $\ell$ polynomials over $K$, then $H \wedge \fC$ splits as $\prod_{i<D} x_i$, where $D = \deg H \deg \fC$ and $[x_i] \in W(M)$.
  Therefore
  \begin{align*}
    d(\hat{f}\rest_{W(K)},\hat{g}\rest_{W(K)})
    = {} & d(\hat{f}\rest_{W(M)},\hat{g}\rest_{W(M)}) \\
    \geq {} & \frac{1}{D} \left| \frac{\sum_{i<D} v \circ f(x_i)}{\deg f} - \frac{\sum_{i<D} v \circ g(x_i)}{\deg g} \right| \\
    = {} & \left| \frac{v( H \wedge f \wedge \fC )}{\deg H \deg f \deg \fC} - \frac{v( H \wedge g \wedge \fC )}{\deg H \deg g \deg \fC} \right|.
  \end{align*}
  Similarly,
  \begin{gather*}
    d_K(\fC,\fC') = d_L(\fC,\fC')
    \geq \left| \frac{v( H \wedge f \wedge \fC )}{\deg H \deg f \deg \fC} - \frac{v( H \wedge f \wedge \fC' )}{\deg H \deg f \deg \fC'} \right|.
  \end{gather*}
  The two inequalities, and our assertion, follow.
\end{proof}

\begin{dfn}
  \label{dfn:VirtualChain}
  We define a \emph{virtual chain} of dimension $\ell$ inside $W$, over a given field $K$, by taking all Chow forms of dimension $\ell$ associated to subsets of $W$, defined over extensions of $K$, dividing by the kernel of $d_K$, and completing.
  The image of a Chow form $\fC$ will be denoted $\sC = \widehat{\fC}\rest_K$ (or just $\widehat{\fC}$, if $\fC$ is already over $K$).
  Assume that $\sC$ is a virtual chain of dimension $\ell$ inside $W$, and $\eta$ is a virtual divisor on $W$, both given as uniform limits
  \begin{gather*}
    \sC = \lim \widehat{\fC_k}\rest_K,
    \qquad
    \eta = \lim \hat{f}_k\rest_{W(K)},
  \end{gather*}
  where $\fC_k$ are over $L \supseteq K$ and $f_k$ are over $M \supseteq K$.
  Then we define
  \begin{gather*}
    \eta \wedge \sC = \lim \widehat{f_k \wedge \fC_k}\rest_K,
  \end{gather*}
  where $f_k \wedge \fC_k$ is calculated in $\Frac( L \otimes_K M)$.
  By \autoref{lem:ChowFormDistance}, this is a uniform limit, resulting in a virtual Chow form of dimension $\ell-1$ inside $W$, which only depends on $\eta$ and $\sC$.

  When iterating this with the same $\eta$, we may also write
  \begin{gather*}
    \eta^{\wedge k} \wedge \sC = \underset{k\ \text{times}}{\underbrace{\eta \wedge \ldots \wedge \eta}} \wedge \sC.
  \end{gather*}
  When $\sC = \widehat{\fC}_{\bP^n}$, for the canonically normalised Chow form of $\bP^n$, we omit it, just writing $\eta \wedge \ldots \wedge \theta$ or $\eta^{\wedge k}$.
  Similarly, the virtual chain $\widehat{g \wedge \fC_{\bP^n}}$ will be simply denoted $\hat{g}$, when there is no risk of ambiguity.
\end{dfn}

By \autoref{lem:ChowFormDistance},
\begin{gather*}
  d(\eta \wedge \sC, \theta \wedge \sD)| \leq d(\eta,\theta) + d(\sC,\sD),
  \qquad
  \eta \wedge \theta \wedge \sC = \theta \wedge \eta \wedge \sC.
\end{gather*}

\section{Sub-valuations}

Let us begin with a few general definitions.
Throughout, a \emph{ring} is commutative and unital.

\begin{dfn}
  \label{dfn:SubValuation}
  A \emph{sub-valuation} on a ring $A$ is a function $u\colon A \rightarrow \bR \cup \{\infty\}$ which satisfies the following properties:
  \begin{align*}
    u(ab) \geq {} & u(a) + u(b) && \text{(sub-multiplicative)} \\
    u(a^2) = {} & 2 u(a) && \text{(power-multiplicative)} \\
    u(a+b) \geq {} & u(a) \wedge u(b) && \text{(ultra-metric)} \\
    u(0) = {} & \infty.
  \end{align*}
  \begin{enumerate}
  \item We say that $u$ is a \emph{proper} sub-valuation if $u(1) = 0$.
  \item We say that $u$ is a \emph{valuation} if it is proper and multiplicative: $u(ab) = u(a) + u(b)$.
  \item We define $\ker u = \bigl\{ a \in A : u(a) = \infty \bigr\}$, and say that $u$ is \emph{reduced} if $\ker u = \{0\}$.
  \item If $A = \bigoplus A_m$ is a graded ring, and $u\bigl(\sum a_m\bigr) = \min u(a_m)$ whenever $\sum a_m$ is a decomposition into homogeneous components, then we say that $u$ is a \emph{homogeneous sub-valuation}.
  \end{enumerate}
\end{dfn}

Notice that $u \equiv \infty$ is the unique improper sub-valuation on $A$.
It follows easily from the axioms that $u(a^n) = n u(a)$ for all $n \in \bN$ (where $0^0 = 1$ and $0 \cdot \infty = 0$).

We consider general (sub-)valuations as generalisations, in the ``valued category'', of radical (prime) ideals on $A$.
In particular, a $\{0,\infty\}$-valued (sub-)valuation on $A$ carries exactly the same information as a radical (prime) ideal, namely its kernel.

\begin{fct}[see Bergman \cite{Bergman:WeakNullstellensatz}]
  \label{fct:WeakNullstellensatz}
  For every proper sub-valuation $u$ on a ring $A$ and $a \in A$ there exists a valuation $v \geq u$ on $A$ such that $v(a) = u(a)$.
  In other words,
  \begin{gather*}
    u = \inf \, \{ \text{valuation} \ v : v \geq u \}.
  \end{gather*}
\end{fct}

\begin{dfn}
  \label{dfn:GeneratedSubValuation}
  Let $A$ be a ring.
  A pair $(a,\alpha)$, where $a \in A$ and $\alpha \in \bR \cup \{\infty\}$, will be called a \emph{condition} (over $A$), which we may also denote informally as ``$u(a) \geq \alpha$''.
  When $A$ is graded and $a \in A$ is homogeneous, we shall say that $(a,\alpha)$ is a \emph{homogeneous condition}.

  Let $C \subseteq A \times \bigl( \bR \cup \{\infty\}\bigr)$ be a set of conditions such that $\pi(C) = \bigl\{ a \in A : \exists \alpha \ (a,\alpha) \in C \bigr\}$ generates $A$.
  We define the sub-valuation \emph{generated} by $C$, denoted $\langle C \rangle$, to be the least sub-valuation $u$ on $A$ satisfying $u(a) \geq \alpha$ for every condition $(a,\alpha) \in C$.

  For the purposes of this definition we identify a subset $C \subseteq A$ (which generates $A$) with the set of conditions $\bigl\{ (a,0) : a \in C \bigr\}$, so $\langle C \rangle$ is least such that $u\rest_C \geq 0$.
\end{dfn}

\begin{lem}
  \label{lem:GeneratedSubValuation}
  With the hypotheses of \autoref{dfn:GeneratedSubValuation}, the generated sub-valuation $u = \langle C \rangle$ exists, and can be recovered by (where $\min \emptyset = \infty$):
  \begin{gather*}
    u(a) = \sup \, \left\{ \frac{1}{n} \min_i \sum_j \alpha_{ij} : a^n = \sum_i \prod_j b_{ij}, \ \text{where} \ n \geq 1 \ \text{and} \ (b_{ij},\alpha_{ij}) \in C \right\}.
  \end{gather*}
  It is proper if and only if, for every $(b_{ij},\alpha_{ij}) \in C$:
  \begin{gather*}
    \sum_i \prod_j b_{ij} = 1 \qquad \Longrightarrow \qquad \min_i \sum_j \alpha_{ij} \leq 0.
  \end{gather*}
\end{lem}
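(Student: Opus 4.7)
Let $u_0(a)$ denote the right-hand-side supremum. The plan is to show: (i) $u_0$ is itself a sub-valuation; (ii) it satisfies $u_0(b) \geq \alpha$ for every $(b,\alpha) \in C$; and (iii) any sub-valuation $u$ satisfying these constraints pointwise dominates $u_0$. Together these give $u_0 = \langle C \rangle$. Since $\pi(C)$ generates $A$, every element admits at least one decomposition as a sum of products from $\pi(C)$, so $u_0 > -\infty$ everywhere, and the empty sum gives $u_0(0) = \infty$.

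Item (iii) is immediate and should be handled first: if $u$ is any sub-valuation with $u(b_{ij}) \geq \alpha_{ij}$ for all $(b_{ij},\alpha_{ij}) \in C$, then iterated sub-multiplicativity and the ultra-metric inequality give $n u(a) = u(a^n) \geq \min_i \sum_j u(b_{ij}) \geq \min_i \sum_j \alpha_{ij}$ for every representation $a^n = \sum_i \prod_j b_{ij}$, whence $u(a) \geq u_0(a)$. Item (ii) is witnessed directly by the representation $b^1 = b$.

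The substantive work is (i). Sub-multiplicativity of $u_0$ is routine: given representations of $a^n$ and $b^n$ with a common $n$ and bounds $\geq n\alpha$, $n\beta$ (a common $n$ is arranged by raising to powers), their product is a representation of $(ab)^n$ with bound $\geq n(\alpha+\beta)$. Power-multiplicativity $u_0(a^2) = 2 u_0(a)$ follows because representations of $(a^2)^n$ and of $a^{2n}$ are literally the same, with normalising denominators differing by a factor of two. The main obstacle is the ultra-metric $u_0(a+b) \geq u_0(a) \wedge u_0(b)$: expanding $(a+b)^N$ and substituting the known representations of $a^n, b^n$ leaves residual factors $a^r, b^{r'}$ with $0 \leq r, r' < n$ that carry no a priori bound. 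The remedy is to exploit that $\pi(C)$ generates $A$: fix once and for all arbitrary decompositions of the finitely many elements $a^r, b^{r'}$ ($r, r' < n$) as sums of products from $\pi(C)$, and let $-M$ be a common lower bound on their minima. Expanding $(a+b)^N$ binomially, and writing each exponent as $s = pn + r$ and $N-s = qn + r'$, the substitution $a^s b^{N-s} = (a^n)^p a^r (b^n)^q b^{r'}$ produces a representation of $(a+b)^N$ whose terms are bounded below by $(p+q)n\alpha' - 2M = (N-r-r')\alpha' - 2M \geq N\alpha' - 2n|\alpha'| - 2M$, where $\alpha' < u_0(a) \wedge u_0(b)$ is arbitrary. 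Dividing by $N$, letting $N \to \infty$, and then letting $\alpha' \nearrow u_0(a) \wedge u_0(b)$, yields the desired inequality.

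For the properness characterisation: the empty-product representation gives $u_0(1) \geq 0$ always. If the stated condition holds, every representation of $1 = 1^n$ has $\min \leq 0$, so $u_0(1) = 0$ and $\langle C \rangle$ is proper. If instead some representation of $1$ has $\min = M > 0$, then multiplying it with itself $k$ times produces representations of $1$ with $\min \geq kM$, forcing $u_0(1) = \infty$; then $u_0 \equiv \infty$ via $u_0(a) \geq u_0(a) + u_0(1)$, so $\langle C \rangle$ is the unique improper sub-valuation.
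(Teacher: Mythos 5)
Your proof is correct and follows essentially the same strategy as the paper's: the only non-trivial point is the ultra-metric inequality for the formula's right-hand side, and both arguments handle it by expanding a high power of $(a+b)$ binomially, splitting each exponent modulo a common $n$, absorbing the bounded remainder terms (the paper fixes decompositions of $a^k b^{n-k}$ for $k<n$, you fix decompositions of $a^r$ and $b^{r'}$ separately --- same idea), and letting the power tend to infinity. The paper dismisses the remaining verifications as easy; you spell them out, and they are fine.
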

\begin{proof}
  Let us show that $u$ satisfies the ultra-metric inequality.
  So let $a,b \in A$, and let $\rho < u(a) \wedge u(b)$ be arbitrary.
  Then there exist $n$ and $(c_{ij},\alpha_{ij})$ in $C$ such that $a^n = \sum_i \prod_j c_{ij}$ and $n\rho < \min_i \sum_j \alpha_{ij}$, and similarly $(d_{ij},\beta_{ij})$ for $b$.

  For each $0 \leq k < n$, we may express $a^k b^{n-k} = \sum_i \prod_j e_{kij}$, where $(e_{kij},\gamma_{kij}) \in C$.
  Let $\delta = \min_k \min_i \sum_j \gamma_{kij}$, and all we care about is that $\delta > -\infty$.
  For $\ell \in \bN$ we have
  \begin{gather*}
    (a+b)^{\ell n+n-1} = \sum_{0 \leq m \leq \ell} \left( a^{nm}b^{n(\ell-m)} \sum_{k < n} N_{m,k} a^k b^{n-k} \right),
  \end{gather*}
  where $N_{m,k}$ is some binomial coefficient.
  In each each term, express $a^n$ using the $c_{ij}$, express $b^n$ using the $d_{ij}$, and the term $a^k b^{n-k}$ using the $e_{kij}$.
  We obtain that
  \begin{gather*}
    u(a+b)
    \geq
    \frac{\ell n \rho + \delta}{\ell n + n + 1}.
  \end{gather*}
  Letting $\ell \rightarrow \infty$, we obtain $u(a+b) \geq \rho$, as desired.

  Everything else is easy.
\end{proof}

Every partial map $u'\colon A \dashrightarrow \bR \cup \{\infty\}$ can be identified with its graph, which is a set of conditions, so, assuming that $\dom u'$ generates $A$, we may speak of the generated sub-valuation $\langle u' \rangle$.
In particular, a sub-valuation $u$ generates itself.
If $A$ is graded and $C$ consists solely of homogeneous conditions, then $\langle C \rangle$ is homogeneous.
A sub-valuation $u$ is homogeneous if and only if it is generated by its restriction to homogeneous elements.

\begin{dfn}
  \label{dfn:AlgebraSubValuation}
  Let $(B,u_B)$ be a sub-valued ring, and $\varphi\colon B \rightarrow A$ a ring morphism, making $A$ a $B$-algebra.
  \begin{enumerate}
  \item
    A \emph{$B$-sub-valuation} on $A$ is a sub-valuation $u_A$ which satisfies, in addition, $u_A \circ \varphi \geq u_B$ (i.e., $u_A\bigl( \varphi(b)\bigr) \geq u_B(b)$ for all $b \in B$).
  \item
    Let $C$ be a set of conditions over $A$ (in the sense of \autoref{dfn:GeneratedSubValuation}) such that $\pi(C)$ generates $A$ as a $B$-algebra.
    Then the \emph{$B$-sub-valuation generated by $C$}, denoted $\langle C \rangle_B$, is the sub-valuation on $A$ generated by $C \cup \bigl\{ \bigl( \varphi(b),u_B(b) \bigr) : b \in B\bigr\}$.
  \end{enumerate}
\end{dfn}

Notice that if $B = K$ is a valued field, then any $K$-sub-valuation on $A$ is, in particular, a $K$-vector space valuation, and must agree with $v_K$ on the image of $K$ in $A$.

\begin{dfn}
  \label{dfn:AlgebraSubValuationUniformConvergence}
  Assume that $A$ is a finitely generated $B$-algebra.
  Fix a finite generating tuple $c \in A^k$.
  For $a \in A$, define $\deg_c a$ to be the least degree of a polynomial $f \in B[X]$ such that $a = f(c)$.
  We say that a sequence of $B$-sub-valuations $u_m$ \emph{converges uniformly} to $u$, if for every $\varepsilon > 0$ there exists $N$ such that for all $m \geq N$ and $a \in A$:
  \begin{gather*}
    \bigl| u(a) - u_m(a) \bigr| \leq \varepsilon \deg_c a, \qquad \text{where}\ |\infty - \infty| = 0.
  \end{gather*}
\end{dfn}

It is easy to check that this does not depend on the choice of generating tuple.

\begin{dfn}
  \label{dfn:AlmostFinitelyGeneratedSubValuation}
  Continuing \autoref{dfn:AlgebraSubValuation}, assume that $A$ is a finitely generated $B$-algebra.
  Let $u$ be a $B$-sub-valuation on $A$.
  \begin{enumerate}
  \item
    We say that $u$ is \emph{finitely generated as a $B$-sub-valuation} (or \emph{finitely generated over $B$}) if is generated, as a $B$-sub-valuation, by finitely many conditions.
    Notice that this implies, in particular, that $A$ is finitely generated as a $B$-algebra.
  \item
    We say that $u$ is \emph{almost finitely generated as a $B$-sub-valuation} (or \emph{almost finitely generated over $B$}) if it is a uniform limit of a sequence of finitely generated $B$-sub-valuations.
  \item
    \label{item:AlmostFinitelyGeneratedSubValuationBySet}
    More precisely, let $C = \bigl\{ (a_i,\alpha_i) : i \in \bN\bigr\}$ be a countable family of conditions over $A$, such that $\pi(C) = \{a_i : i \in \bN\}$ generates $A$ over $B$.
    Then $\{a_i : i < N\}$ generates $A$ over $B$ for some $N$, and we say that $C$ \emph{almost finitely generates} $u$ as a $B$-sub-valuation if
    \begin{gather*}
      \bigl\langle (a_i,\alpha_i) : i < N + n \bigr\rangle \underset{n \rightarrow \infty}\longrightarrow u \qquad \text{uniformly.}
    \end{gather*}
  \end{enumerate}
\end{dfn}

Notice that:
\begin{enumerate}
\item \autoref{dfn:AlmostFinitelyGeneratedSubValuation}\autoref{item:AlmostFinitelyGeneratedSubValuationBySet} does not depend on the choice of enumeration of the conditions.
\item A $B$-sub-valuation on $A$ is (almost) finitely generated if and only if it (almost finitely) generated by its restriction to a finite (countable) subset of $A$.
\item It is homogeneous if and only if the finite (countable) generating set can be taken to consist of homogeneous elements.
\end{enumerate}

Let us now restrict the scope a little.
As usual, we let $W \subseteq \bP^n$ be algebraic, defined over $K \vDash ACMVF$.
We let $I(W)$ denote the radical homogeneous ideal associated to $W$, and let $K[W] = K[X]/I(W)$, which inherits the structure of a graded $K$-algebra (it is the graded $K$-algebra of regular functions on the associated line bundle).
Notice that any reduced finitely generated graded $K$-algebra can be presented in this manner, so even from a purely algebraic point of view this is essentially the general case.

\begin{rmk}
  \label{rmk:NoHBT}
  A $K$-sub-valuation on $K[X]$ need not necessarily be almost finitely generated.
  In other words, we do not expect to have an analogue of the Hilbert Basis Theorem regarding Noetherianity.
  Indeed, assume that $(a_i : i \in \bN) \subseteq K$ are such that $v(a_i - a_j) = 0$ for all $i \neq j$.
  Then the sub-valuation $u$ generated by the conditions $u\bigl( \prod_{i<m} (X-Ya_i) \bigr) \geq m^2$ is not almost finitely generated.
\end{rmk}

The algebra $K[X]$ has a canonical set of generators over $K$, giving rise to a canonical notion of distance on proper sub-valuations:
\begin{gather*}
  d(u,w) = \sup_f \frac{| u(f) - w(f)|}{\deg f},
\end{gather*}
as $f$ varies over all non-constant homogeneous polynomials (again, $|\infty - \infty| = 0$).
Then $u_n \rightarrow u$ uniformly, in the sense of \autoref{dfn:AlgebraSubValuationUniformConvergence}, if and only if $d(u_n,u) \rightarrow 0$.
If $u$ and $w$ have distinct kernels then clearly $d(u,w) = \infty$.
As we show below, if $u$ and $w$ are homogeneous and almost finitely generated over $K$ then the converse holds as well.

\begin{exm}
  \label{ntn:PlainValuation}
  The simplest valuation on $K[X]$ is $\tilde{v}$.
  It is homogeneous, and (finitely) generated by the conditions $\tilde{v}(X_i) \geq 0$.
\end{exm}

Conversely (but this is far from being the general case):

\begin{prp}
  \label{prp:LinearlyGeneratedSubValuation}
  Assume $u$ is a reduced $K$-sub-valuation on $K[X]$ which is generated by a family $C$ of \emph{linear} conditions, i.e., of the form $(\lambda,\alpha)$ (or $u(\lambda) \geq \alpha$), where $\lambda \in K[X]_1$.
  Then up to a change of coordinates by a triangular matrix, $u$ is arbitrarily close to $\tilde{v}$.
  Consequently, $u$ is an almost finitely generated valuation.
\end{prp}
\begin{proof}
  By \autoref{lem:VectorSpaceValuationBasis}.
\end{proof}

Hilbert's Nullstellensatz can be stated as a bijection between (homogeneous) radical ideals and (projective) zero-sets.
The analogous statement in our setting asserts the existence of an isometric bijection between almost finitely generated homogeneous sub-valuations $u$ on $K[X]$ (analogous to homogeneous radical ideals) and projective zero-sets $W \subseteq \bP^n$ augmented with a virtual divisor $\eta$.
Moreover, $W$ is always the zero-set of $\ker u$ (this is just Hilbert's Nullstellensatz).

\begin{dfn}
  \label{dfn:Duals}
  Let $W \subseteq \bP^n$ be Zariski closed, and $\eta\colon W \rightarrow \bR$ any function.
  For homogeneous $f \in K[X]_m$ we define
  \begin{gather*}
    \eta^*_m(f) = \inf_{\xi \in W} \, \bigl( vf(\xi) - m \eta(\xi) \bigr) = m \inf_{\xi \in W} \, \bigl( \hat{f}(\xi) - \eta(\xi) \bigr).
  \end{gather*}
  We extend this to non-homogeneous polynomials by
  \begin{gather*}
    \eta^*(f) = \min_m \eta^*_m(f_m),
  \end{gather*}
  where $f = \sum_m f_m$ is the decomposition into homogeneous components.

  Conversely, let $u\colon K[W] \rightarrow \bR \cup \{\infty\}$ be any function.
  For $\xi \in W$ we define
  \begin{gather*}
    u^*(\xi) = \inf_f \, \left( \hat{f}(\xi) - \frac{u(f)}{\deg f} \right),
  \end{gather*}
  as $f$ varies over non-constant homogeneous polynomials.
\end{dfn}

\begin{thm}[Virtual divisor Nullstellensatz]
  \label{thm:Nullstellensatz}
  Let $W \subseteq \bP^n$ be Zariski closed, and let $u$ be a reduced homogeneous sub-valuation on $K[W]$, almost finitely generated, say by the homogeneous conditions $C = \bigl\{ (f_k,\gamma_k) : k \in \bN \bigr\}$.
  \begin{enumerate}
  \item We have
    \begin{gather*}
      u^* = \inf_k \left( \hat{f_k} - \frac{\gamma_k}{\deg f_k} \right)
    \end{gather*}
    uniformly, i.e.,
    \begin{gather*}
      \min_{k < m} \left( \hat{f_k} - \frac{\gamma_k}{\deg f_k} \right) \underset{m \rightarrow \infty}\longrightarrow u^* \qquad \text{uniformly}.
    \end{gather*}
    In particular, $u^*$ is a virtual divisor on $W$.
    Conversely, every virtual divisor on $W$ arises in this manner, and moreover, with all $\gamma_k = 0$.
  \item The double dual $u^{**}$ is equal to $u$.
    Consequently, for any virtual divisor $\eta$ on $W$ we have $\eta^{**} = \eta$.
  \item This bijection between virtual divisors on $W$ and reduced almost finitely generated homogeneous sub-valuations on $K[W]$ is isometric:
    \begin{gather*}
      d(u,w) = d(u^*,w^*).
    \end{gather*}
  \end{enumerate}
\end{thm}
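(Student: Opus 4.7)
My plan hinges on a duality between projective points of $W$ and homogeneous $K$-sub-valuations on $K[W]$. For any point $\xi = [x] \in W(L)$ (over an extension $L/K$) and any real $r$, the assignment $w_{\xi,r}(f) := v\bigl(f(x)\bigr) - \deg f \cdot \bigl( \tilde v(x) + r \bigr)$, extended homogeneously to all of $K[W]$, defines a proper homogeneous $K$-sub-valuation on $K[W]$ satisfying $w_{\xi,r}(f) = \deg f \cdot \bigl( \hat f(\xi) - r \bigr)$ for homogeneous $f$ of positive degree. Conversely, given a reduced proper homogeneous $K$-valuation $v$ on $K[W]$ (in the sense of \autoref{dfn:SubValuation}), not every $\bar X_i$ can lie in $\ker v$, so one may choose an index $j$ with $v(\bar X_j)$ finite and minimal among the $v(\bar X_i)$; set $\xi_v := [\bar X_0/\bar X_j : \cdots : \bar X_n/\bar X_j] \in W(L_v)$, where $L_v$ is the degree-zero part of the homogeneous fraction field of $D := K[W]/\ker v$. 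A direct computation yields $\hat g(\xi_v) = v(g)/\deg g - v(\bar X_j)$ for every homogeneous $g$ of positive degree.

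With these tools I first establish the double-duality $u^{**} = u$ of part~(2). The bound $u \leq u^{**}$ is immediate from the definition of $u^*$. For $u^{**} \leq u$, fix a nonzero homogeneous $f \in K[W]$; by \autoref{fct:WeakNullstellensatz} there is a valuation $v \geq u$ on $K[W]$ with $v(f) = u(f)$, so in particular $f \notin \ker v$ and the point $\xi_v$ is well-defined. The identity for $\hat g(\xi_v)$ combined with $v \geq u$ and the equality $v(f) = u(f)$ gives $u^*(\xi_v) = -v(\bar X_j) + \inf_g \bigl( v(g) - u(g)\bigr)/\deg g = -v(\bar X_j)$, the infimum being attained at $g = f$. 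Substituting yields $vf(\xi_v) - \deg f \cdot u^*(\xi_v) = v(f) = u(f)$, so $u^{**}(f) \leq u(f)$. A parallel argument proves $\eta^{**} = \eta$ for every virtual divisor $\eta$: use \autoref{prp:PolynomialTrace} (degree-by-degree) to express $\eta$ as a uniform limit of minima $\min_{k<M} \hat{g_k}$ with $g_k \in K[X]$ homogeneous over $K$; setting $u_M := \langle (g_k, 0) : k<M\rangle_K$, the point-to-sub-valuation construction (applied with $r = \min_{k<M} \hat{g_k}(\xi)$, so that $w_{\xi,r}$ satisfies every generating condition of $u_M$ and hence $w_{\xi,r} \geq u_M$) gives $u_M^* = \min_{k<M} \hat{g_k}$, and since $\eta \leq u_M^*$ implies $\eta^* \geq u_M$, we conclude $\eta^{**} \leq u_M^* \to \eta$.

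The isometry of part~(3) follows quickly: the estimate $d(u^*, w^*) \leq d(u, w)$ is a direct sup-of-inf bound, and the converse rewrites $|u(f) - w(f)| = |u^{**}(f) - w^{**}(f)| \leq \deg f \cdot d(u^*, w^*)$ using part~(2). The first claim of part~(1), including the uniform convergence, then drops out: the same point-to-sub-valuation reasoning shows $u_m^* = \min_{k<m} \bigl( \hat{f_k} - \gamma_k/\deg f_k \bigr)$, and the hypothesis $u_m \to u$ uniformly combined with the isometry yields $u_m^* \to u^*$ uniformly, so $u^*$ is a uniform limit of minima of virtual divisors, hence itself a virtual divisor on $W$. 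For the converse in part~(1), the $\eta^*$ built above is a reduced proper homogeneous $K$-sub-valuation on $K[W]$ (reduced because $\ker \eta^*$ corresponds to polynomials vanishing on $W$, hence is trivial in $K[W]$), and applying the isometry to $u_M$ and $\eta^*$ gives $d(u_M, \eta^*) = d(u_M^*, \eta^{**}) = d(u_M^*, \eta) \to 0$, proving that $C = \{(g_k, 0)\}$ almost finitely generates $\eta^*$ with $(\eta^*)^* = \eta^{**} = \eta$. The main obstacle is the construction of $\xi_v$ from $v$ in the first paragraph: handling the kernel of $v$ and choosing the minimizing coordinate $j$ are what make the identity $\hat g(\xi_v) = v(g)/\deg g - v(\bar X_j)$ correct and the subsequent calculations go through cleanly.
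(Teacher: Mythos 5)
Your proof proposal follows essentially the same strategy as the paper's: unwind the duality via points associated to valuations (Bergman's weak Nullstellensatz, \autoref{fct:WeakNullstellensatz}), use the immediate inequality $d(u^*,w^*)\le d(u,w)$ for uniform convergence, and deduce the isometry from the double-dual identity. Your explicit $\xi\leftrightarrow w_{\xi,r}$ dictionary (the sub-valuation $w_{\xi,r}$ attached to a point, and the normalised point $\xi_v$ attached to a valuation $v$) is a cleaner packaging of what the paper does by directly unwinding the presentation formula of \autoref{lem:GeneratedSubValuation}; it buys you a one-line proof of $u_m^*=\eta_m$ via $w_{\xi,\eta_m(\xi)}\ge u_m$, which is a genuine simplification.

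There is, however, a gap in your proof of $u^{**}\le u$. The point $\xi_v$ you construct lives in $W(L_v)$, where $L_v$ is a proper extension of $K$, whereas the infimum in the definition of $u^{**}(f)=\inf_{\xi\in W}\bigl(vf(\xi)-\deg f\cdot u^*(\xi)\bigr)$ ranges over $\xi\in W(K)$ (this is the reading forced by \autoref{dfn:Duals} and \autoref{dfn:VirtualDivisor}). You therefore cannot directly conclude $u^{**}(f)\le vf(\xi_v)-\deg f\cdot u^*(\xi_v)$. The missing ingredients are: (a) the observation that $vf(\xi)-\deg f\cdot u^*(\xi)$ is given by a formula with parameters in $K$, which requires knowing that $u^*$ is a virtual divisor over $K$; and (b) an appeal to quantifier elimination for $\mathrm{ACMVF}$ (\autoref{fct:QuantifierEliminationACMVF}) to transfer the existence of a small value from $W(L_v)$ to $W(K)$. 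The paper makes both steps explicit, and (a) is precisely why it establishes part~(1) \emph{before} part~(2). Your ordering puts part~(2) first, so at that point you have not yet shown $u^*$ is a virtual divisor, and the transfer step is unjustified. This is repairable: your tools already give $u_m^*=\eta_m$ with $\eta_m$ a virtual divisor (by \autoref{prp:PolynomialTrace}) and $\eta_m\to u^*$ uniformly (via $d(u_m^*,u^*)\le d(u_m,u)$), so establishing that $u^*$ is a virtual divisor first, then invoking quantifier elimination in the $\xi_v$ step, closes the gap. As written, though, the circularity and the omitted transfer step are real defects.
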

\begin{proof}
  Let
  \begin{gather*}
    u_m = \bigl\langle (f_k,\gamma_k) : k < m \bigr\rangle_K,
    \qquad
    \eta_m = \min_{k<m} \left( \hat{f_k} - \frac{\gamma_k}{\deg f_k} \right).
  \end{gather*}
  Clearly, $u_m^* \leq \eta_m$, and for $m$ large enough, the sequence $(f_k : k < m)$ generates $K[W]$ as a $K$-algebra, and its linear members generate $K[W]_1$ as a $K$-vector space.
  If $\xi \in W$ and $u_m^*(\xi) < \rho$, then there exists a homogeneous polynomial $f$ which does not vanish at $\xi$, such that $\frac{u(f)}{\deg f} > \hat{f}(\xi) - \rho$.
  The same remains true if we replace $f$ with $f^n$ (for $n \geq 1$), so we may assume that
  \begin{gather*}
    f = \sum_i \prod_j g_{ij}, \qquad \frac{\min_i \sum_j \delta_{ij}}{\deg f} + \rho > \hat{f}(\xi),
  \end{gather*}
  where $(g_{ij},\delta_{ij}) \in \bigl\{ (f_k,\gamma_k) : k < m\bigr\}$.
  But then, unwinding the definition of $\hat{f}$, there must exist a pair $(f_k,\gamma_k) = (g_{ij},\delta_{ij})$ such that $\frac{\gamma_k}{\deg f_k} + \rho > \hat{f_k}(\xi)$, so $\rho > \eta_m(\xi)$, and $u_m^* = \eta_m$.
  It follows immediately from the definitions that $d(u^*,w^*) \leq d(u,w)$, so $\eta_m \rightarrow u^*$ uniformly.
  Since the $v(K^\times)$ is dense in $\bR$, we may take all the $\gamma_k$ to be zero.
  By \autoref{prp:PolynomialTrace}, each $\eta_m$ is a virtual divisor, and so is $u^*$, and conversely, every virtual divisor arises in this fashion, proving the first item.

  For the rest, let us indeed assume that $\gamma_k = 0$ for all $k$.
  It is immediate from \autoref{dfn:Duals} that $u^{**}$ is a homogeneous sub-valuation on $K[W]$, and that $u^{**} \geq u$.
  Assume that for some $g \in K[W]_d$ we have $u(g) < \rho$.
  By \autoref{fct:WeakNullstellensatz}, there exists a valuation $w$ on $K[W]$ such that $w \geq u$ and $w(g) < \rho$.
  Then $\ker w$ is a prime ideal of $K[W]$ containing $I(W)$, and $w$ induces a valuation $v_L$ on the field $L = \Frac\bigl( K[X] / \ker w \bigr)$, extending $v_K$.

  Let $\xi = [x] \in \bP^n(L)$, where $x \in L^{n+1}$ is the image of $X$.
  Then $v \circ f_k(x) = w(f) \geq u(f) \geq 0$, $\xi \in W(L)$, and $u^*$ extends to $W(L)$ by definability, so:
  \begin{gather*}
    u^*(\xi)
    = \inf_k \hat{f_k}(\xi)
    = \inf_k \frac{w(f_k)}{\deg f_k} - \tilde{v}(x)
    \geq - \tilde{v}(x).
  \end{gather*}
  Therefore
  \begin{gather*}
    vg(\xi) - d u^*(\xi)
    \leq vg(\xi) + d \tilde{v}(x)
    = w(g) < \rho.
  \end{gather*}
  By quantifier elimination in ACMVF, there exists $\zeta \in W(K)$ such that
  \begin{gather*}
    vg(\zeta) - d u^*(\zeta) < \rho,
  \end{gather*}
  so $u^{**}(g) < \rho$ as well, proving that $u^{**} = u$.
  Since every virtual divisor $\eta$ on $W$ can be expressed as $u^*$, we also have $\eta^{**} = u^{***} = u^* = \eta$, proving the second item.

  It is again immediate from the definitions that $d(\eta^*,\theta^*) < d(\eta,\theta)$ for any two virtual divisors.
  Therefore $d(u,w) = d(u^{**},w^{**}) \leq d(u^*,w^*) \leq d(u,w)$, completing the proof.
\end{proof}

Let us recall an easy corollary of the classical Nullstellensatz (we prefer to work in projective space, but one can also give an affine version).

\begin{fct}
  \label{fct:ClassicalNullstellensatzUniformGeneration}
  For every $n,N \in \bN$ there exists $m \in \bN$ such that the following holds.
  Let $K$ be an algebraically closed field, let $I \subseteq K[X]$ be homogeneous, generated by polynomials of degree at most $N$, and let $g$ be homogeneous of degree at most $N$ vanishing on $V(I) \subseteq \bP^n(K)$.
  Then $g^m \in I$.
\end{fct}
\begin{proof}
  Let $D = \dim K[X]_{\leq N}$, which only depends on $n$ and $N$.
  Then one can generate $I$ with fewer than $D$ polynomials, all of degree at most $N$.
  In particular, the fact that $g$ vanishes on $V(I)$ can be expressed in first-order logic.

  Assume that no such $m$ exists.
  Taking an ultra-product of counter-examples for all $m$, we obtain an algebraically closed field $K$, a finite family of homogeneous polynomials of degree at most $N$ generating an ideal $I$, and $g$ homogeneous of degree at most $N$, which vanishes on $V(I)$.
  By Hilbert's Nullstellensatz, there exists $M$ such that $g^M \in I$.
  Viewed as a property of the coefficients, the same must hold in infinitely many of the aforementioned counter-examples.
  In particular, it holds in the $m$th counter-example for some $m \geq M$, a contradiction.
\end{proof}

The corresponding corollary of \autoref{thm:Nullstellensatz} is the following.

\begin{cor}
  \label{cor:NullstellensatzUniformGeneration}
  For every $n,N \in \bN$ there exists $m \in \bN$ such that the following holds.
  Let $K$ be an algebraically closed valued field, and let $W \subseteq \bP^n(K)$ be defined by polynomials of degree at most $N$.
  Let $u$ be a reduced homogeneous sub-valuation on $K[W]$, almost finitely generated by a family $C$ of homogeneous conditions $(f,\gamma)$, where $\deg f \leq N$.
  Let $c = \sup \ \biggl\{ \bigl| \gamma - \tilde{v}(f) \bigr| : (f,\gamma) \in C\biggr\}$.
  Then for every $g$ homogeneous of degree at most $N$:
  \begin{gather}
    \label{eq:NullstellensatzUniformGeneration}
    u(g) - \frac{c}{N}
    \leq \frac{1}{m} \sup \, \left\{ \min_i v(a_i) + \sum_j \gamma_{ij} : g^m = \sum_i a_i \prod_j f_{ij}, \ \text{where} \ a_i \in K \ \text{and} \ (f_{ij},\gamma_{ij}) \in C \right\}
    \leq u(g).
  \end{gather}
\end{cor}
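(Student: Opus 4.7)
The right inequality in \eqref{eq:NullstellensatzUniformGeneration} is straightforward and holds for every $m$. Given any representation $g^m = \sum_i a_i \prod_j f_{ij}$ with $(f_{ij}, \gamma_{ij}) \in C$, power-multiplicativity (which follows inductively from the $u(a^2) = 2 u(a)$ axiom together with sub-multiplicativity, giving $u(a^k) = k u(a)$ for every $k \geq 1$), combined with the ultra-metric and sub-multiplicative axioms, yields
\[
m \, u(g) = u(g^m) \geq \min_i u\Bigl(a_i \prod_j f_{ij}\Bigr) \geq \min_i\Bigl(v(a_i) + \sum_j \gamma_{ij}\Bigr).
\]
Taking the supremum over representations and dividing by $m$ shows the middle expression in \eqref{eq:NullstellensatzUniformGeneration} is at most $u(g)$.

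For the left inequality, the plan is a compactness argument, modelled on the proof of \autoref{fct:ClassicalNullstellensatzUniformGeneration}. Assume for contradiction that no uniform $m$ works for some fixed $n, N$: then for each $m$ there is a counterexample $(K_m, W_m, u_m, C_m, g_m)$. I would normalize by scaling so that $\tilde{v}(g_m) = 0$ and $\tilde{v}(f) = 0$ for every $(f, \gamma) \in C_m$ (hence $|\gamma| \leq c_m$), and, passing to a subsequence, assume $c_m \to c_\infty \in [0, \infty)$ (the case $c_m \to \infty$ makes the left inequality vacuous for large $m$). Take the ultraproduct $K = \prod K_m/\cU \vDash ACMVF$, yielding $W \subseteq \bP^n(K)$ defined by polynomials of degree $\leq N$, $g \in K[X]$ of degree $\leq N$, and a countable family $C$ of conditions over $K$ obtained as ultralimits from fixed enumerations of the $C_m$; set $u = \langle C \rangle_K$.

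By \autoref{thm:Nullstellensatz}, $u$ is a reduced almost finitely generated homogeneous $K$-sub-valuation on $K[W]$, and by \autoref{lem:GeneratedSubValuation}, $u(g)$ equals the supremum over $k$ of the middle term in \eqref{eq:NullstellensatzUniformGeneration} at power $k$. For each fixed $k$, Loš's theorem identifies this middle term with the $\cU$-limit of its analogues for $(g_m, C_m)$. Combining this with the doubling observation (the middle term is non-decreasing along multiples of $k$) and the counterexample inequality for $m$ a multiple of $k$, then taking $\cU$-limits and supping over $k$, yields the key estimate
\[
u(g) \leq \lim_\cU u_m(g_m) - c_\infty/N.
\]

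The principal obstacle, and the step requiring the most care, is to equate $u(g)$ with $\lim_\cU u_m(g_m)$: once this is established, the preceding estimate becomes $u(g) \leq u(g) - c_\infty/N$, forcing $c_\infty = 0$ and reducing to a degenerate $c = 0$ case to be handled separately via \autoref{fct:ClassicalNullstellensatzUniformGeneration} at the level of kernels. To establish the equality, the cleanest route passes through the dual virtual divisors: each $\eta_m = u_m^*$ is, by \autoref{prp:PolynomialTrace}, a uniform limit of functions $\min_{i < N'} \hat{h}_i$ with $N' = \binom{n+N}{N}$ fixed and $h_i$ of degree $\leq N$. Since this approximation class is preserved by ultralimits---this is where the uniform bound on the degrees enters crucially---the ultralimit $\eta$ of the $\eta_m$'s is itself a virtual divisor on $W$, so \autoref{thm:Nullstellensatz} identifies $u = \eta^*$ and hence $u(g) = \lim_\cU u_m(g_m)$.
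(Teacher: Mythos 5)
Your overall strategy mirrors the paper's: the right inequality is routine, and the left is handled by a compactness/ultraproduct argument whose key ingredient is the Nullstellensatz duality of \autoref{thm:Nullstellensatz}. However, two of the steps you gloss over are precisely where the paper invests its effort, and as written your argument has genuine gaps there.

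First, the normalization. Rather than passing to a subsequence and hoping for a finite $c_\infty$, the paper simply rescales the valuation so that $c = 1$. After this, the ultraproduct has $c_\infty = 1$, and the estimate $u(g) \leq u(g) - 1/N$ is an immediate, clean contradiction. Your conclusion ``forcing $c_\infty = 0$, reducing to a degenerate $c = 0$ case'' is a sign that something is off: with the right normalization there is no degenerate case to revisit. Also, your dismissal of $c_m \to \infty$ as ``vacuous'' is unjustified: the counterexamples require a gap $u(g_m) - \mu_m(g_m) > c_m/N$, and you give no a priori bound on this gap in terms of $c_m$ — that bound is exactly what the corollary asserts, so you cannot invoke it. Rescaling avoids the issue entirely.

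Second, and more seriously, the paper begins by reducing to the case $\lvert C\rvert \leq D$ where $D = \dim K[X]_{\leq N}$ (a sub-valuation arbitrarily close to $u$ is generated by a subset of $C$ of bounded size). This reduction is not cosmetic: it is what makes the dual $\eta = u^*$ an explicit \emph{finite} minimum $\min_{k<D}\bigl(\hat{f_k} - \gamma_k/\deg f_k\bigr)$ rather than a uniform limit, and hence makes both $u(g) = \eta^*(g)$ and the middle expression $\mu_m(g)$ genuine formulas in the bounded data $(g, \{f_k\}, \{\gamma_k\})$, which is exactly what is needed for Łoś's theorem. Your claim that ``this approximation class is preserved by ultralimits'' is not established: even with $N'$ and degrees bounded, $\eta_m$ is a uniform limit over a sequence whose convergence rate may depend on $m$, and — more delicately — you never verify that the ultralimit $\eta$ you construct equals $u^*$ for the sub-valuation $u = \langle C\rangle_K$ generated by the ultralimit condition set. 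Without that identity, $u(g) = \lim_\cU u_m(g_m)$ does not follow. The paper's reduction to bounded $\lvert C\rvert$ is precisely the device that turns this into a routine transfer.
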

\begin{proof}
  The second inequality always holds, so we only prove the first.
  Possibly re-scaling the valuation, we may assume that $c = 1$.
  As in the proof of \autoref{fct:ClassicalNullstellensatzUniformGeneration}, let $D = \dim K[X]_{\leq N}$.
  Then a sub-valuation arbitrarily close to $u$ can be generated by a subset of $C$ of size at most $D$, so we may add the hypothesis that $|C| \leq D$.
  Similarly, we require that $\tilde{v}(f) = 0$, and so $\gamma \in [-1,1]$, for every $(f,\gamma) \in C$, and that $\tilde{v}(g) = 0$.

  By \autoref{thm:Nullstellensatz}, value $u(g)$ is uniformly definable in $g$ and $C$ (i.e., in the coefficients of $g$, viewed as a point in $\bP$, and, for each $(f,\gamma) \in C$, in $f$, viewed in the same manner, and in the real constant $\gamma$).
  Letting $u_m(g)$ denote the middle expression in \autoref{eq:NullstellensatzUniformGeneration}, it similarly definable in $g$ and $C$.

  Assume that our assertion is false, and again take and ultra-product of counter-examples.
  We may assume that for every $m$, the set of all multiples of $m$ is large in the ultra-filter.
  Since $u_m$ increases with the divisibility relation, we obtain $g$ and $C$ such that $u(g) \geq u_m(g) + 1/N$ for all $m$, which is impossible by definition of $u(g)$.
\end{proof}

\begin{cor}
  \label{cor:NullstellensatzFieldExtension}
  Let $\eta_K$ be a virtual divisor on $W(K)$, and let $u_K = \eta_K^*$ be the corresponding sub-valuation on $K[W]$.
  Let $L/K$ be an extension.
  Let $\eta_L$ be the extension of $\eta_K$ to $W(L)$ by definability, and let $u_L = \langle u_K \rangle_L$ be the sub-valuation on $L[W]$ generated by $u_K$.
  Then $\eta_L^* = u_L = u_K \otimes v_L$.
  In other words, the duality commutes with scalar extensions.
\end{cor}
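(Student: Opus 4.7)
The plan is to prove the chain $\eta_L^* = u_L = u_K \otimes v_L$ by treating the two equalities separately.

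For the first equality $\eta_L^* = u_L$, the key point is that both sides are controlled by the same family of homogeneous generators. By \autoref{thm:Nullstellensatz} applied over $K$, we may write $u_K = \langle (f_k, 0) : k \in \bN \rangle_K$ with $\eta_K = \lim_m \min_{k<m} \hat{f}_k\rest_{W(K)}$ uniformly. Each $\hat{f}_k$ is definable in the sense of \autoref{sec:Definability}, so it extends unambiguously to $W(L)$; the uniform convergence persists on $W(L)$ because the sup distance is computed by a formula whose value is, by quantifier elimination (\autoref{fct:QuantifierEliminationACMVF}), the same in $K$ and in $L$. Hence $\eta_L = \lim_m \min_{k<m} \hat{f}_k\rest_{W(L)}$ uniformly, and applying \autoref{thm:Nullstellensatz} over $L$ yields $\eta_L^* = \lim_m u_m^L$ uniformly, where $u_m^L = \langle (f_k, 0)_{k<m} \rangle_L$. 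To match this limit with $u_L = \langle u_K \rangle_L$, observe first that $u_m^L \leq u_L$ since $u_L(f_k) \geq u_K(f_k) \geq 0$. For the reverse, I use \autoref{lem:GeneratedSubValuation} to write $u_L(a)$ (for homogeneous $a \in L[W]$) as a supremum over sum-of-product presentations of $a^n$ whose factors lie in $K[W]$ (weighted by $u_K$) or in $L$ (weighted by $v_L$). In a presentation witnessing a value within $\delta$ of $u_L(a)$, I replace each weight $u_K(b)$ by its approximation $u_m^K(b)$, using that almost-finite generation gives $|u_K(b) - u_m^K(b)| \leq \varepsilon \deg b$ for $m$ large, and using homogeneity to bound the total $K[W]$-degree of factors in each monomial by $n\deg a$. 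This yields $u_m^L(a) \geq u_L(a) - \delta - \varepsilon \deg a$, hence $u_L = \lim_m u_m^L = \eta_L^*$.

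For the second equality $u_L = u_K \otimes v_L$, I read $u_K \otimes v_L$ via \autoref{lem:VectorSpaceValuationTensor} as a $K$-vector-space valuation on $K[W] \otimes_K L = L[W]$, given explicitly by the sup formula in that lemma. The inequality $u_K \otimes v_L \leq u_L$ is immediate: any simple-tensor presentation $c = \sum_i y_i \otimes f_i$ is a valid $n = 1$ presentation in \autoref{lem:GeneratedSubValuation}, so $u_L(c) \geq \min_i (u_K(f_i) + v_L(y_i))$, and taking the supremum gives the inequality.

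The main obstacle is the reverse inequality $u_L \leq u_K \otimes v_L$: to invoke minimality of $u_L = \langle u_K \rangle_L$ I must verify that $w = u_K \otimes v_L$ is an $L$-sub-valuation, not merely a vector-space valuation. Sub-multiplicativity, the ultra-metric inequality, and the relations $w \geq u_K$ on $K[W]$ and $w \geq v_L$ on $L$ all follow directly from the sup formula of \autoref{lem:VectorSpaceValuationTensor}; the delicate property is power-multiplicativity. I would establish it via Bergman's theorem (\autoref{fct:WeakNullstellensatz}): writing $u_K = \inf\{v : v \geq u_K,\ v\ \text{a proper valuation on}\ K[W]\}$, the sup formula yields $w = \inf_v (v \otimes v_L)$. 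For each such valuation $v$, the kernel is prime, so $v$ extends to the fraction field $\Frac(K[W]/\ker v)$, and the main theorem of \cite{BenYaacov:ValuedFieldTensorProduct} (applicable since $K \vDash ACMVF$) produces a genuine valuation on $\Frac\bigl((K[W]/\ker v) \otimes_K L\bigr)$ whose restriction to $L[W]$ is $v \otimes v_L$. Thus each $v \otimes v_L$ is multiplicative, and power-multiplicativity passes to the infimum, completing the proof.
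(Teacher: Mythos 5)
Your split into two equalities mirrors the structure of the paper's proof, and your treatment of the first equality $\eta_L^* = u_L$ is essentially correct: the key observation (the generating family $C$ for $u_K$ also almost finitely generates $u_L$ over $L$, and $\eta_L$ is the uniform limit of $\min_{k<m}\hat f_k\rest_{W(L)}$ by definability and quantifier elimination) is precisely what the paper asserts. Your degree-counting detour via \autoref{lem:GeneratedSubValuation} is not needed — once one knows $\eta_L^* = \lim_m u_m^L$ (by the isometry of \autoref{thm:Nullstellensatz}), the inequality $\lim_m u_m^L \leq \langle C\rangle_L = u_L \leq \eta_L^*$ closes the loop, the last step because $\eta_L^*$ is an $L$-sub-valuation with $\eta_L^*(f_k) \geq 0$.

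For the second equality your route is genuinely different from the paper's, and it contains a real gap. The paper proves $u_L \leq u_K \otimes v_L$ directly: assuming $(u_K \otimes v_L)(g) + 2\varepsilon < u_L(g)$ for some $g \in L[W]_d$, it chooses (via \autoref{lem:VectorSpaceValuationBasis}) a basis of $K[W]_d$ making $u_K$ (hence $u_K \otimes v_L$) $\varepsilon$-close to $\tilde v$, and then transfers the resulting strict inequality from $L$-points to $K$-points by quantifier elimination, contradicting the choice of basis. You instead try to show that $w = u_K \otimes v_L$ is itself an $L$-sub-valuation and then appeal to minimality of $u_L$. Sub-multiplicativity and the ultra-metric inequality do follow from the sup formula of \autoref{lem:VectorSpaceValuationTensor}, as you say. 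The problem is power-multiplicativity. Your argument rests on the claim that writing $u_K = \inf\{v : v \geq u_K\}$ by Bergman's theorem yields $w = \inf_v(v \otimes v_L)$. But the sup formula only gives the inequality
\begin{gather*}
  (u_K \otimes v_L)(z) = \sup_{z = \sum x_i \otimes y_i} \inf_{v \geq u_K} \min_i \bigl(v(x_i) + v_L(y_i)\bigr)
  \ \leq\
  \inf_{v \geq u_K} \sup_{z = \sum x_i \otimes y_i} \min_i \bigl(v(x_i) + v_L(y_i)\bigr)
  = \inf_{v \geq u_K} (v \otimes v_L)(z),
\end{gather*}
since $\sup\inf \leq \inf\sup$ always. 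The reverse inequality is a genuine minimax statement over the (non-convex) space of valuations dominating $u_K$, and it is exactly the content needed; you have not provided it, and it does not obviously follow from compactness or from Sion-type theorems (the space of valuations is not convex). Moreover, for the conclusion you actually need the reverse inequality: to deduce $w(a^2) \leq 2w(a)$ from multiplicativity of each $v \otimes v_L$, you would compute $w(a^2) \leq \inf_v(v\otimes v_L)(a^2) = 2\inf_v(v\otimes v_L)(a)$, but then you must pass from $\inf_v(v\otimes v_L)(a)$ down to $w(a)$, which is precisely the unproven direction. Until that step is justified, power-multiplicativity of $u_K \otimes v_L$ — and hence your invocation of minimality of $u_L$ — is not available. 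The paper's quantifier-elimination argument sidesteps the question of whether $u_K \otimes v_L$ is a sub-valuation altogether, which is why it does not run into this obstacle.
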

\begin{proof}
  Let $u_K$ be almost finitely generated by $C = \bigl\{ f_k : k \in \bN \bigr\}$ over $K$, with $f_k$ homogeneous, so $\eta_K = \inf_k \, \hat{f_k}\rest_{W(K)}$ uniformly.
  Then $u_L$ is almost finitely generated by $C$ over $K$, and $\eta_L = \inf_k \, \hat{f_k}\rest_{W(L)}$ uniformly, so indeed $u_L = \eta_L^*$.

  It is also clear that $u_K \otimes v_L \leq u_L$, since being a $K$-algebra sub-valuation is a stronger property than begin a $K$-vector space valuation.
  If they are not equal, then there exists some $g \in L[W]_d$ and $\varepsilon > 0$ such that $(u_K \otimes v_L)(g) + 2 \varepsilon < u_L(g)$.
  We can find a basis $(f_i : i < N)$ for $K[W]_d$, with respect to which $u_K$ is $\varepsilon$-close to $\tilde{v}$.
  This is also a basis for $K[W]_d \otimes_K L$, with respect to which $u_K \otimes v_L$ is $\varepsilon$-close to $\tilde{v}$.
  We can express $g = \sum b_i f_i$, with $b_i \in L$ so $u_L(g) > (u_K \otimes v_L)(g) + 2 \varepsilon \geq \tilde{v}(b) + \varepsilon$.
  In other words, in $L$ we have:
  \begin{gather*}
    \sup_{[z] \in \bP^{N-1}} \, \inf_{[x] \in W} \Bigl( v\bigl( \sum z_i f_i(x) \bigr) - d \tilde{v}(x) - \tilde{v}(z) \Bigr) > \varepsilon.
  \end{gather*}
  By quantifier elimination, the same holds in $K$.
  In other words, there are $a \in K^N$ such that, if $h = \sum a_i f_i \in K[W]_d$, then $u_K(h) > \tilde{v}(a) + \varepsilon$, contradicting the choice of $(f_i)$.
  (Notice that this implies, in particular, that the natural surjective map $K[W] \otimes_K L \rightarrow L[W]$ is bijective.)
\end{proof}

\begin{cor}
  \label{cor:NullstellensatzRestriction}
  Let $W \subseteq V \subseteq \bP^n$ be algebraic sets and $\eta$ a virtual divisor on $V$.
  Then for homogeneous $f \in K[X]$ we have
  \begin{gather}
    \label{eq:NullstellensatzRestriction}
    (\eta\rest_W)^*(f)
    = \lim_{m \rightarrow \infty} \, \frac{\eta^*\bigl( f^m + I(W) \bigr)}{m}.
  \end{gather}
\end{cor}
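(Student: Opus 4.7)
The plan is to interpret $\eta^*(f^m + I(W))$ as the quotient sub-valuation on $K[W]$ induced by $\eta^*$ on $K[V]$ (extending \autoref{dfb:VectorSpaceValuationQuotient}): concretely,
\begin{gather*}
  \eta^*(f^m + I(W)) = \sup\bigl\{\eta^*(g) : g \in K[V]_{md},\ g \equiv f^m \pmod{I(W)}\bigr\}.
\end{gather*}
The upper bound is then free: for such $g$, the difference $g - f^m$ vanishes on $W$, so $\hat g\rest_{W(K)} = \hat f\rest_{W(K)}$, whence $\eta^*(g) = md\inf_{\xi\in V}\bigl(\hat g(\xi) - \eta(\xi)\bigr) \leq md\inf_{\xi\in W}\bigl(\hat f(\xi)-\eta(\xi)\bigr) = m(\eta\rest_W)^*(f)$, giving $\eta^*(f^m+I(W))/m \leq (\eta\rest_W)^*(f)$ for every $m$.

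For the lower bound, I use \autoref{thm:Nullstellensatz} to write $\eta = \inf_k \hat{h_k}\rest_V$ uniformly for some homogeneous $h_k \in K[X]$ with $\eta^*(h_k) \geq 0$ (i.e.\ all $\gamma_k = 0$). Restricting to $W \subseteq V$ this same family also exhibits $\eta\rest_W$ as a uniform infimum, so $(\eta\rest_W)^*$ is almost finitely generated on $K[W]$ by the conditions $C = \bigl\{(h_k + I(W), 0) : k \in \bN\bigr\}$. Given $\epsilon > 0$, pick a finite sub-family $C_0 \subseteq C$ such that the generated $K$-sub-valuation $\langle C_0\rangle_K$ approximates $(\eta\rest_W)^*$ at $f + I(W)$ within $\epsilon$. \autoref{lem:GeneratedSubValuation}, adapted to $K$-sub-valuations via the expanded generating set $C_0 \cup \bigl\{(c, v(c)) : c \in K\bigr\}$ (cf.\ \autoref{dfn:AlgebraSubValuation}), together with homogeneity restricting us to homogeneous decompositions of degree $nd$, yields $n \geq 1$, scalars $a_i \in K$, and $h_{ij}$'s drawn from $C_0$ such that
\begin{gather*}
  f^n + I(W) = \sum_i a_i \prod_j (h_{ij} + I(W)) \ \text{ in } K[W],
  \qquad
  \tfrac{1}{n}\min_i v(a_i) \geq (\eta\rest_W)^*(f) - 2\epsilon.
\end{gather*}
Lifting, set $g = \sum_i a_i \prod_j h_{ij} \in K[V]_{nd}$: it represents $f^n$ modulo $I(W)$, and by the ultra-metric and sub-multiplicative axioms for $\eta^*$ combined with $\eta^*(h_{ij}) \geq 0$,
\begin{gather*}
  \eta^*(g) \geq \min_i\Bigl(v(a_i) + \sum_j \eta^*(h_{ij})\Bigr) \geq \min_i v(a_i) \geq n\bigl((\eta\rest_W)^*(f) - 2\epsilon\bigr).
\end{gather*}
Hence $\eta^*(f^n + I(W))/n \geq (\eta\rest_W)^*(f) - 2\epsilon$.

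Finally, the sequence $m \mapsto \eta^*(f^m + I(W))$ is super-additive, since the product of near-optimal representatives of $f^m$ and $f^k$ represents $f^{m+k}$ and $\eta^*$ is sub-multiplicative; by Fekete's lemma the limit $\lim_m \eta^*(f^m + I(W))/m$ exists and equals the supremum in $m$, which by the two bounds above is exactly $(\eta\rest_W)^*(f)$, proving \eqref{eq:NullstellensatzRestriction}. The delicate step is the lower bound: it is crucial that the generators $h_{ij}$ satisfy $\eta^*(h_{ij}) \geq 0$ so that the estimate $\min_i v(a_i)$ lifts from $K[W]$ to a lower bound on $\eta^*(g)$ in $K[V]$. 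The availability of the ``$\gamma_k = 0$'' normalisation in \autoref{thm:Nullstellensatz} is exactly what makes this possible; without it, the scalar shifts would accumulate and obstruct the passage to the limit.
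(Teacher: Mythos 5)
Your proof is correct, and it takes a genuinely different route from the paper's. The paper's argument is a characterization argument: using the Nullstellensatz to present $\eta^*$ as the least sub-valuation on $K[V]$ with $\eta^*(h_k) \geq 0$, it observes that the right-hand side (call it $u$), pulled back to $K[V]$, is the least sub-valuation $\geq \eta^*$ with $\ker u \supseteq I(W)$ (the existence of $u$ as a sub-valuation follows from monotonicity of $\eta^*(f^m + I(W))/m$ along the divisibility poset), hence on $K[W]$ the least satisfying $u(h_k + I(W)) \geq 0$ for all $k$ — which is precisely $(\eta\rest_W)^*$. You instead prove two inequalities directly. The upper bound $\eta^*(f^m + I(W))/m \leq (\eta\rest_W)^*(f)$ is immediate because representatives of the coset agree with $f^m$ on $W$ and $\inf_V \leq \inf_W$. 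The lower bound unwinds \autoref{lem:GeneratedSubValuation}: a near-optimal homogeneous decomposition of $(f + I(W))^n$ over a finite sub-family of generators in $K[W]$ lifts to a representative $g$ of $f^n$ in $K[V]$ to which sub-multiplicativity and $\eta^*(h_k) \geq 0$ apply. You then finish with super-additivity and Fekete, whereas the paper uses the divisibility limit. Both arguments pivot on the availability of the $\gamma_k = 0$ normalisation from \autoref{thm:Nullstellensatz}, and you correctly emphasize this as the crucial ingredient. Your version is somewhat longer but more explicit — essentially the "unwound" form of the paper's minimality characterization — and might be easier for a reader who does not want to verify that the limit is itself a sub-valuation.
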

\begin{proof}
  Say that $\eta = \inf_k \, \hat{f_k}\rest_V$ uniformly, so $\eta^*$ is the least sub-valuation on $K[V]$ satisfying $\eta^*(f_k) \geq 0$.
  Let $u$ denote the right hand side, extended to non-homogeneous polynomials in the usual way.
  It is easy to check that the limit is increasing with respect to the divisibility relation $m \mid m'$, and to deduce that $u$ is indeed a homogeneous sub-valuation on $K[W]$.
  Pulled back to a sub-valuation on $K[V]$, it is clearly the least one satisfying $u \geq \eta^*$ and $\ker u \supseteq I(W)$.
  In other words, $u$ is the least sub-valuation on $K[W]$ satisfying $u(f_k) \geq 0$ for all $k$.
  On the other hand, $\eta\rest_W = \inf_k \, \hat{f_k}\rest_W$, so $u = (\eta\rest_W)^*$.
\end{proof}

\section{Statement of the main result (and a few easy cases)}
\label{sec:MainResult}

In this section we state our main result, \autoref{thm:MainResult}.
It relates the self-intersection of a virtual divisor $\eta$ on a projective variety, on the one hand, with the asymptotic behaviour of the volume of the dual $\eta^*_m$, on the other hand, as the degree $m$ goes to infinity.

Let $E$ be a vector space.
For any $k$, the symmetric group $\fS_k$ acts naturally on $E^{\otimes k}$, and we may define the exterior power of $E$ by
\begin{gather*}
  {\bigwedge}^k E = \left\{ x \in E^{\otimes k} : \sigma(x) = (\sgn \sigma) x \ \text{for all} \ \sigma \in \fS_k \right\}.
\end{gather*}
In particular, for a finite family of vectors $\bx \in E^k$ we shall write
\begin{gather*}
  \bx^\wedge = x_0 \wedge \cdots \wedge x_{k-1} = \sum_{\sigma \in \fS_k} (\sgn \sigma) x_{\sigma(0)} \otimes \cdots \otimes x_{\sigma(k-1)} \in {\bigwedge}^k E.
\end{gather*}
Given a valuation $u$ on $E$, this gives rise to a value $u^{\otimes k}(\bx^\wedge)$.
When there is no risk of ambiguity, we may simply denote this by $u(\bx^\wedge)$.

\begin{dfn}
  \label{dfn:Volume}
  When $\bx$ is a basis of $E$, we call $u(\bx^\wedge)$ the \emph{volume} of $(E,u)$ (or of $u$), relative to $\bx$, denoted $\vol_\bx E$ or $\vol_\bx u$, depending on context.
\end{dfn}

The volume can be viewed as a (particularly simple) special case of \autoref{dfn:VirtualChain}.
Indeed, we may assume that $E = K^m$, equipped with the standard basis $\be$.
Then $\eta\bigl( [x] \bigr) = u(x) - \tilde{v}(x)$ is a virtual divisor on $\bP^{m-1}$, and
\begin{gather*}
  \eta^{\wedge m} = u(\be^\wedge).
\end{gather*}

In order to state our main result we need one last technical definition.

\begin{dfn}
  \label{dfn:VarietyToHypersurfaceGood}
  Let $K_0$ be a field, $T$ an infinite family of indeterminates, and $L \supseteq K_0(T)$.
  We shall call a valuation on $K_0(T)$, or any larger field, \emph{good} (with respect to $K_0,T$), if
  \begin{enumerate}
  \item
    \label{item:VarietyToHypersurfaceGoodGeneric}
    either the tuple $T$ is $v$-generic over $K_0$,
  \item
    \label{item:VarietyToHypersurfaceGoodPAdic}
    or the restriction to $K_0(T)$ is the $P$-adic valuation for some irreducible polynomial $P \in K_0[T]$.
  \end{enumerate}
\end{dfn}

\begin{thm}
  \label{thm:MainResult}
  Let $K_0$ be an algebraically closed field, $T$ an infinite family of indeterminates, and $K = K_0(T)^a$.
  Let $d \in \bN$, and let $W \subseteq \bP^n(K)$ be a projective variety defined over $K_0$, of dimension $\ell$, with Chow form $\fC_W$.
  Then there exist a tuples $a$ in $K$, a sequence $\gamma_m \rightarrow 0$, and for infinitely many $m \in \bN$ there exists a basis $B_m$ for $K[W]_m$, such that for any good valuation on $K$, and any virtual divisor $\eta$ on $W(K)$ of degree $d$, defined over $K_0$, with $d$-width $w_d(\eta)$ as per \autoref{dfn:VirtualDivisor}, we have
  \begin{gather}
    \label{eq:MainResult}
    \frac{\vol_{B_m} \eta^*}{m \dim K[W]_m} \geq - \frac{\eta^{\wedge \ell+1} \wedge \widehat{\fC}_W}{\ell+1} + \gamma_m \bigl( \tilde{v}(a) - w_d(\eta) \bigr).
  \end{gather}
\end{thm}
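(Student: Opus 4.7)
The plan is to construct the basis $B_m$ generically over $K_0$ using the transcendental family $T$, to bound $\vol_{B_m}\eta^*$ from below via the tensor-product formula of \autoref{lem:VectorSpaceValuationTensor}, and then to convert the resulting sum into the self-intersection via the Chow-form wedges of \autoref{dfn:VirtualChain}. The tuple $a$ will collect the specialisation coefficients used to build $B_m$.

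First, for infinitely many $m$, I would build a basis $B_m = (f_1, \ldots, f_N)$ of $K[W]_m$ (with $N = \dim K[W]_m$) whose elements are linear combinations, over $K_0$, of a fixed monomial basis, with coefficient tuple $a$ that is $v$-generic over $K_0$ in the sense of \autoref{lem:ValuationGeneric}. Under case \autoref{item:VarietyToHypersurfaceGoodGeneric} of \autoref{dfn:VarietyToHypersurfaceGood}, the $v$-genericity of $T$ propagates to $a$, so $\tilde v(a) = 0$; under case \autoref{item:VarietyToHypersurfaceGoodPAdic}, $\tilde v(a)$ is merely finite and will be accounted for in the error term. Iterated application of \autoref{lem:VectorSpaceValuationTensor} to the restriction of $\eta^*$ to $K[W]_m$, on the $N$-fold exterior power, yields the lower bound
\begin{gather*}
  \vol_{B_m}\eta^* \;\geq\; \sum_{i=1}^N \eta^*(f_i),
\end{gather*}
each term of which is the infimum $\inf_{\xi \in W}\bigl(v f_i(\xi) - m\eta(\xi)\bigr)$ by \autoref{dfn:Duals}.

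Second, I would reinterpret $\sum_i \eta^*(f_i)$ as an intersection number. Using \autoref{prp:PolynomialTrace}, choose $g \in L[X]_d$ with $\eta = \hat g\rest_{W(K)}$ and $\tilde v(g)/d \geq \sup\eta - w_d(\eta)$. For each $f_i$, bound $\eta^*(f_i)$ from below by evaluating at points of $W$ obtained by intersecting the zero locus of $f_i$ with $\ell$ further generic hyperplanes, and then reorganise these pairings through \autoref{dfn:VirtualChain}; by the factorisation into products over intersection points established in the proof of \autoref{lem:ChowFormDistance}, the aggregate expression collapses onto the global self-intersection $\eta^{\wedge \ell+1}\wedge\widehat{\fC}_W$. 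Combining with the Hilbert-polynomial asymptotic $N \sim \deg(W)\,m^\ell/\ell!$ and the normalisation by $m N$ produces the leading term $-\bigl(\eta^{\wedge \ell+1}\wedge\widehat{\fC}_W\bigr)/(\ell+1)$ in \autoref{eq:MainResult}, the factor $1/(\ell+1)$ being the standard Arakelov normalisation arising from the $(\ell+1)$-fold self-intersection against an $\ell$-dimensional cycle.

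The hardest part will be extracting a single sequence $\gamma_m \to 0$ absorbing, uniformly in $\eta$ of fixed degree $d$ and in the choice of good valuation, three sources of error: the gap $w_d(\eta)$ between $\eta$ and any polynomial representative $g$, the potential defect $\tilde v(a) \neq 0$ arising under \autoref{item:VarietyToHypersurfaceGoodPAdic}, and the lower-order terms of the Hilbert polynomial together with the combinatorial residue from the Chow-form decomposition. Uniform control should follow from the quantitative Nullstellensatz \autoref{cor:NullstellensatzUniformGeneration}, which bounds the growth of $\eta^*$ in terms of finitely many data, together with a definability/compactness argument on the space of virtual divisors of fixed degree $d$. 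The $P$-adic case \autoref{item:VarietyToHypersurfaceGoodPAdic} should then reduce to the $v$-generic case via \autoref{cor:NullstellensatzFieldExtension}, by specialising the $T$-generic valuation to the $P$-adic one and tracking the resulting deviation through $\tilde v(a)$.
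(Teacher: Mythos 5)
Your key inequality $\vol_{B_m}\eta^* \geq \sum_i \eta^*(f_i)$ is correct but far too weak to prove the theorem, and this is the central gap. By \autoref{lem:MainResultObservations} we may normalise $\sup\eta = 0$. If the coefficients of each $f_i$ are $v$-generic over $K_0$ (as you propose), then $\hat{f_i}\equiv 0$ on $W(K_0)$, so $\eta^*(f_i) = m\inf_\xi\bigl(\hat{f_i}(\xi)-\eta(\xi)\bigr) = -m\sup\eta = 0$ for every $i$, and your bound collapses to $\vol_{B_m}\eta^* \geq 0$. But the target lower bound $-\tfrac{mN}{\ell+1}\,\eta^{\wedge \ell+1}\wedge\widehat{\fC}_W$ is strictly positive as soon as $\eta$ is not identically zero. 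Relatedly, your plan to ``bound $\eta^*(f_i)$ from below by evaluating at points of $W$'' is backwards: since $\eta^*(f_i)$ is an infimum over $W$, evaluation at chosen intersection points can only produce upper bounds, which do not combine usefully with $\vol \geq \sum\eta^*(f_i)$.

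The paper's route is genuinely different, and the difference is essential. Rather than choosing a basis with generic coefficients, it constructs the very specific Vandermonde-type bases $\Phi_{F,\psi}$ from Hasse derivatives of a family $F$ of $v$-generic polynomials representing $\eta$ itself (\autoref{lem:VirtualDivisorSetup}, \autoref{lem:VirtualDivisorBasis}). For such a basis, $\eta^*$ is close to $\tilde{v}$, so $\vol_{\Phi_{F,\psi}}\eta^* = O(m^n)$; the intersection number then enters not through evaluating $\eta^*$ at points, but through the explicit factorisation of the change-of-basis determinant $\det\Phi_{F,\psi} = \prod_{H\in\binom{F}{n+1}} H^\wedge \cdot \prod \widehat{\fd}_{G,\psi}$ from \cite[Theorem~4.15]{BenYaacov:Vandermonde}, whose leading contribution $\sum_H v(H^\wedge) = d^{n+1}\binom{m}{n+1}\eta^{\wedge n+1} + O(m^n)$ produces the $\eta^{\wedge\ell+1}\wedge\widehat{\fC}_W$ term with the correct $1/(\ell+1)$ normalisation. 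Finally, the general variety is treated not by a compactness/definability argument on the space of virtual divisors, but by a generic linear projection $\pi\colon W\to U\subseteq\bP^\nu$ onto a hypersurface (\autoref{sec:GenericProjection}), with the pull-back and push-forward machinery of \autoref{lem:VarietyToHypersurfacePull} and \autoref{lem:VarietyToHypersurfacePush} transporting the estimate back and forth, and \autoref{lem:VarietyToHypersurfacePullGood} handling the two cases of a good valuation directly rather than via a reduction of one to the other.
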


Let us start with a few easy observations.

\begin{lem}
  \label{lem:MainResultObservations}
  Let $K$ be a valued field, $W \subseteq \bP^n$ a variety of dimension $\ell$, and let $\eta$ and $\theta$ be two virtual divisors on $W$.
  Let $\fC_W$ be a Chow form for $W$, and let $B$ be a basis for $K[W]_m$.
  Then
  \begin{gather}
    \label{eq:MainResultObservationsDistance}
    \left| \frac{\vol_B \eta^*}{m \dim K[W]_m} - \frac{\vol_B \theta^*}{m \dim K[W]_m} \right| \leq d(\eta,\theta), \qquad
    \left| \frac{\eta^{\wedge \ell+1} \wedge \widehat{\fC}_W}{\ell+1} - \frac{\theta^{\wedge \ell+1} \wedge \widehat{\fC}_W}{\ell+1} \right| \leq d(\eta,\theta).
  \end{gather}
  If $\theta = \eta + \alpha$ for some $\alpha \in \bR$, then
  \begin{gather}
    \label{eq:MainResultObservationsTranslation}
    \frac{\vol_B \theta^*}{m \dim K[W]_m} = \frac{\vol_B \eta^*}{m \dim K[W]_m} - \alpha, \qquad
    \frac{\theta^{\wedge \ell+1} \wedge \widehat{\fC}_W}{\ell+1} = \frac{\eta^{\wedge \ell+1} \wedge \widehat{\fC}_W}{\ell+1} + \alpha.
  \end{gather}
  Consequently, it will suffice to prove \autoref{thm:MainResult} in the case where $\sup \eta = 0$.
\end{lem}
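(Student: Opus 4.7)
The plan is to verify the four displayed identities, then deduce the reduction.

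For \autoref{eq:MainResultObservationsDistance}, both inequalities reduce to iterated applications of already-established distance bounds. On the volume side, \autoref{dfn:Duals} directly gives $|\eta^*_m(f) - \theta^*_m(f)| \leq m \cdot d(\eta,\theta)$ for every $f \in K[W]_m$, so, as $K$-vector space valuations on $K[W]_m$, $\eta^*$ and $\theta^*$ differ pointwise by at most $C := m \cdot d(\eta,\theta)$. Iterating the supremum formula of \autoref{lem:VectorSpaceValuationTensor} shows that replacing a factor of an $N$-fold tensor product valuation by another that differs pointwise by at most $C$ changes the value on $B^\wedge$ by at most $NC$, where $N = \dim K[W]_m$; this gives $|\vol_B \eta^* - \vol_B \theta^*| \leq NC$, and dividing by $mN$ yields the first inequality. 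On the intersection side, the bound $d(\eta \wedge \sC, \theta \wedge \sD) \leq d(\eta,\theta) + d(\sC,\sD)$ stated after \autoref{dfn:VirtualChain} is applied $\ell+1$ times, peeling off one $\eta$-vs-$\theta$ factor at a time starting from $\sC = \sD = \widehat{\fC}_W$, yielding distance $\leq (\ell+1)\,d(\eta,\theta)$; since the final virtual chain has dimension $-1$, i.e., is a real number, this distance is absolute difference, and dividing by $\ell+1$ gives the second inequality.

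For \autoref{eq:MainResultObservationsTranslation}, suppose $\theta = \eta + \alpha$. Directly from \autoref{dfn:Duals}, $\theta^*_m(f) = \eta^*_m(f) - m\alpha$ for nonzero $f$ of degree $m$; the $N$-fold tensor formula then shows that $\vol_B \theta^*$ and $\vol_B \eta^*$ differ by $-Nm\alpha$, proving the first formula. For the second, I first reduce to the case $d\alpha \in v(K^\times)$ using density of $v(K^\times)$ in $\bR$ together with the distance inequality just established. Pick $b \in K$ with $v(b) = d\alpha$; writing $\eta = \hat{f}\rest_W$ then gives $\eta + \alpha = \widehat{bf}\rest_W$. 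Since the resultant is homogeneous of degree $\deg\fC$ in the coefficients of the $f$-slot, $(bf) \wedge \fC = b^{\deg\fC}(f \wedge \fC)$; the normalization in $\widehat{\cdot}$ built into \autoref{dfn:ChowFormDistance} divides by $\deg F \cdot \deg(f \wedge \fC) = \deg F \cdot d \cdot \deg\fC$, so the $b^{\deg\fC}$ factor contributes exactly $v(b)/d = \alpha$ per wedge. Iterating $\ell+1$ times yields $(\eta+\alpha)^{\wedge\ell+1} \wedge \widehat{\fC}_W = \eta^{\wedge\ell+1} \wedge \widehat{\fC}_W + (\ell+1)\alpha$, which after dividing by $\ell+1$ gives the second formula.

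For the reduction to $\sup\eta = 0$, apply the translation formulas with $\alpha = -\sup\eta$, which is finite since virtual divisors are bounded by \autoref{prp:PolynomialTrace}. Both sides of \autoref{eq:MainResult} shift by $+\sup\eta$, and the $w_d$-width is translation-invariant: the bijection $f \mapsto bf$ (with $v(b) = d\alpha$) between representatives of $\eta$ and of $\eta + \alpha$ shifts $\tilde{v}(f)/d$ by $\alpha$, exactly cancelling the shift of $\sup\eta$ in the definition of $w_d$. The main technical obstacle is the self-intersection translation formula: one must carefully match the scaling exponent of the resultant in the $f$-slot to the normalization hidden in $\widehat{\cdot}$, and properly handle the case $d\alpha \notin v(K^\times)$ via an extension or density argument.
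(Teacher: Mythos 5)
Your proof is correct and follows essentially the same route as the paper's: you bound the volume difference via the pointwise bound $|\eta^*_m - \theta^*_m| \leq m\,d(\eta,\theta)$ propagated through the $N$-fold tensor product, bound the intersection difference via \autoref{lem:ChowFormDistance}, obtain the translation formulas from $\theta^*_m = \eta^*_m - m\alpha$ and from rescaling a representative $f \mapsto bf$, and then reduce to $\sup\eta = 0$ noting that $w_d$ is translation-invariant. The only difference is that you make explicit two points the paper leaves implicit: the density argument for approximating $d\alpha \in v(K^\times)$ (the paper just writes $v(a) = \alpha\deg f$ as if exactly achievable), and the translation-invariance of $w_d$ (the paper instead picks $f$ with $\tilde{v}(f)$ maximal and invokes the translation formulas without comment). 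Both are genuine gaps in the paper's exposition that your write-up closes correctly.
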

\begin{proof}
  If $g \in K[W]_m$, then $\bigl| \eta^*(g) - \theta^*(g) \bigr| \leq m d(\eta,\theta)$.
  Therefore, if $z \in K[X]_m^{\otimes \dim K[X]_m}$ (and in particular if $z = B^\wedge$), then $|\eta^*(z) - \theta^*(z)| \leq m \dim K[W]_m d(\eta,\theta)$.
  The second inequality in \autoref{eq:MainResultObservationsDistance} is just \autoref{lem:ChowFormDistance}.
  If $\theta = \eta + \alpha$, then $\theta_m^* = \eta_m^* - m\alpha$, whence the identity in \autoref{eq:MainResultObservationsTranslation}.
  We may express $\eta = \hat{f}\rest_{W(K)}$ and $\theta = \widehat{af}\rest_{W(K)}$, where $v(a) = \alpha \deg f$, and the second identity follows.

  Let $\eta = \hat{f}\rest_{W(K)}$, where $f \in L[X]_d$ for some extension $L/K$, such that $\tilde{v}(f)$ is maximal possible.
  By \autoref{eq:MainResultObservationsTranslation} we may reduce to the case where $\sup \eta = 0$.
\end{proof}

We are going to prove \autoref{thm:MainResult} by calculating the volume of $K[X]_m$ with respect to particularly convenient bases.
For this we require a few basic properties of the volume function.

\begin{lem}
  \label{lem:Volume}
  Let $E$ be a valued vector space over $K$, and let $\bx \in E^m$ be a basis.
  \begin{enumerate}
  \item
    \label{item:VolumeBasisChange}
    If $\by = \bx A$ is any other basis (we consider bases as rows of vectors) then $\vol_\by E = \vol_\bx E + v(\det A)$.
  \item
    \label{item:VolumeScalarExtension}
    Let $L/K$ be an extension, and $E_L = E \otimes_K L$ be the extension of scalars.
    Then, identifying $\bx$ with $\bx \otimes 1 \in E_L^m$, the volume remains unchanged: $\vol_\bx E = \vol_\bx E_L$.
  \item
    \label{item:VolumeExplicit}
    We have
    \begin{gather*}
      \vol_\bx E = \sum_{i<m} u( x_i + F_i ), \qquad F_i = \Span(x_j : j < i).
    \end{gather*}
  \item
    \label{item:VolumeQuotient}
    Let $\by \subseteq \bx$ generate $F \subseteq E$.
    Then
    \begin{gather*}
      \vol_\bx E = \vol_\by F + \vol_{\bx \setminus \by} (E/F),
    \end{gather*}
    where by $\bx \setminus \by$ we actually mean the image in $E/F$.
  \end{enumerate}
\end{lem}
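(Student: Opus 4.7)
My plan is to treat the four items in order. Items (1) and (2) are short multilinear-algebra observations, item (3) is the main technical step (and the principal obstacle), and item (4) falls out as a direct corollary of (3).

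For item (1), since $\dim E = m$ the space $\bigwedge^m E$ is one-dimensional and spanned by $\bx^\wedge$, and the standard multilinear identity gives $\by^\wedge = \det(A)\, \bx^\wedge$ for $\by = \bx A$. Applying $u^{\otimes m}$ and using the scalar-multiplicativity $u^{\otimes m}(cz) = v(c) + u^{\otimes m}(z)$ (immediate from \autoref{lem:VectorSpaceValuationTensor}) yields the claim. For item (2), the canonical isomorphism $\bigwedge^m_L E_L \cong (\bigwedge^m_K E) \otimes_K L$ sends $(\bx \otimes 1)^\wedge$ to $\bx^\wedge \otimes 1$, and \autoref{lem:VectorSpaceValuationTensor} then evaluates the tensor product valuation on this simple tensor as $u^{\otimes m}(\bx^\wedge) + v_L(1) = u^{\otimes m}(\bx^\wedge)$; one further uses uniqueness of the tensor product valuation to identify the latter with the $L$-tensor product valuation on $E_L^{\otimes_L m}$.

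For item (3), I first reduce to the case that $u$ is reduced on $E$ (otherwise both sides are infinite, or the computation descends to the finite-dimensional quotient $E/\ker u$ via \autoref{lem:VectorSpaceValuationBasis}). Fix $\varepsilon > 0$. By \autoref{lem:VectorSpaceValuationBasis} there is an upper triangular matrix $T$ with respect to $\bx$ such that $u$ is $\varepsilon$-close pointwise to $\tilde{v}_{\bx'}$, where $\bx' = \bx T$ and $\tilde{v}_{\bx'}$ denotes the standard reduced valuation in the basis $\bx'$. Since $T$ is upper triangular, the flag $F_i$ is the same whether computed in $\bx$ or $\bx'$; writing $x_i = T_{ii}^{-1} x'_i + (\text{element of } F_i)$ and computing the quotient valuation in the basis $\bx'$ gives $u(x_i + F_i) = -v(T_{ii}) + O(\varepsilon)$, hence $\sum_i u(x_i + F_i) = -v(\det T) + O(m\varepsilon)$. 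On the other hand, the sup formula in \autoref{lem:VectorSpaceValuationTensor}(ii) shows that if $u_1, u_2$ are $\varepsilon_i$-close to reduced valuations $\tilde{v}_i$, then $u_1 \otimes u_2$ is $(\varepsilon_1 + \varepsilon_2)$-close to $\tilde{v}_1 \otimes \tilde{v}_2$; iterating, $u^{\otimes m}$ is $m\varepsilon$-close to $\tilde{v}_{\bx'}^{\otimes m}$. Since $\bx'^\wedge$ is an alternating sum of $m!$ vectors of the tensor basis induced by $\bx'$, its $\tilde{v}_{\bx'}^{\otimes m}$-value is $0$, so $u^{\otimes m}(\bx'^\wedge) = O(m\varepsilon)$. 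Combining this with item (1) applied to the change of basis $T$ gives $u^{\otimes m}(\bx^\wedge) = u^{\otimes m}(\bx'^\wedge) - v(\det T) = \sum_i u(x_i + F_i) + O(m\varepsilon)$, and we let $\varepsilon \to 0$. The propagation of $\varepsilon$-closeness through the $m$-fold tensor is the main obstacle, but it is a short calculation from the sup formula.

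Item (4) then follows immediately from item (3) by ordering $\bx$ so that $\by$ precedes $\bx \setminus \by$ and splitting the sum $\sum_i u(x_i + F_i)$. For $x_i \in \by$, the subspace $F_i$ already lies inside $F$, so the term equals the corresponding quotient valuation inside $F$, and these terms sum to $\vol_\by F$. For $x_i = z_j \in \bx \setminus \by$ we have $F_i = F \oplus \Span(z_k : k < j)$, and the tower identity $u(x + F_i) = (u/F)(\overline{x} + \overline{\Span(z_k : k < j)})$, a one-line unwinding of the definition of quotient valuation as a double supremum, rewrites the remaining terms as the sum computing $\vol_{\bx \setminus \by}(E/F)$ via item (3) applied to the quotient.
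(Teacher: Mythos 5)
Your proof is correct and follows the same route the paper takes: the determinant identity $\by^\wedge = (\det A)\,\bx^\wedge$ for item (1), identification of $(E_L)^{\otimes m}$ with $(E^{\otimes m})_L$ for item (2), and, for item (3), approximating $u$ by $\tilde v$ via a triangular change of basis (\autoref{lem:VectorSpaceValuationBasis}) together with the observation that $\vol_{\be}\tilde v = 0$ and that $\varepsilon$-closeness propagates through tensor powers, with item (4) falling out of item (3). The paper's own proof is just a terse sketch of exactly these steps; you have filled in the details (the $\varepsilon$-propagation, the $-v(\det T)$ bookkeeping, the tower identity for quotient valuations) without changing the underlying argument.
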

\begin{proof}
  \begin{enumerate}
  \item Since $\by^\wedge = (\det A) \bx^\wedge$.
  \item We may identify $(E_L)^{\otimes m}$ with $(E^{\otimes m})_L$ as valued vector spaces over $L$ (by reduction to $\tilde{v}$), and then $(\bx \otimes 1)^\wedge = \bx^\wedge \otimes 1$.
  \item By (the proof of) \autoref{lem:VectorSpaceValuationBasis}, together with the fact that the volume of $\tilde{v}$ with respect to the standard basis is zero.
  \item Follows from the previous item.
  \end{enumerate}
\end{proof}

Now to the construction of a basis, given a virtual divisor.

\begin{lem}
  \label{lem:VirtualDivisorSetup}
  Let $\eta$ be a virtual divisor of degree $d$ on $\bP^n$, defined over $K$, with $\sup \eta = 0$ and $\inf \eta = -\alpha$.
  Let $m \geq n$ be given.
  \begin{enumerate}
  \item
    \label{item:VirtualDivisorSetupExists}
    In an extension $L/K$, one can find $f \in L[X]_d$ weakly $v$-generic over $K$ such that $\eta = \hat{f}\rest_{\bP^n(K)}$, in which case $\tilde{v}(f) = -d\alpha$, so $\alpha = w_d(f)$.
    Moreover, one can construct a family $F = (f_i : i < m) \subseteq L[X]_d$ of polynomials are isomorphic to $f$ over $K$, and are independent over $K$ in the sense of \autoref{sec:Definability}, i.e., such that if $L_i = K(f_i)$ is the subfield generated by $K$ and the coefficients of $f_i$, then their compositum is $\Frac(L_0 \otimes_K \cdots \otimes_K L_{m-1})$, as a valued field.
  \item
    \label{item:VirtualDivisorSetupGeneric}
    Let us fix such a family.
    Then each $f_i$ is weakly $v$-generic over $K(f_j : i \neq j)$.
  \item
    \label{item:VirtualDivisorSetupGood}
    Let us also fix $dX \in L^{n+1}$ $v$-generic over $K(F)$, which we use as direction for Hasse derivation, as in \cite[Section~4]{BenYaacov:Vandermonde}.
    Then $dX$ is algebraically generic over $K(F)$, and $F$ is good in the sense of \cite[Section~4]{BenYaacov:Vandermonde}.
  \end{enumerate}
\end{lem}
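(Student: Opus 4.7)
The existence of $f \in L[X]_d$ with $\eta = \hat{f}\rest_{\bP^n(K)}$ is the definition of a virtual divisor of degree $d$; the identity $\tilde{v}(f) = -d\alpha$ follows by combining the elementary bound $\tilde{v}(f)/d \leq v(f(x))/d - \tilde{v}(x) = \hat{f}(\xi)$ for $\tilde{v}(x) = 0$ (so $\tilde{v}(f)/d \leq \inf \eta = -\alpha$) with the ultra-product observation recorded just after \autoref{dfn:VirtualDivisor} that the supremum in the definition of $w_d$ is attained; then $\alpha = w_d(\eta)$ by direct computation. For the weakly $v$-generic clause, I rework the proof of \autoref{prp:PolynomialTrace}: $\eta$ is a uniform limit of functions $\min_{i<N_k} \hat{g}_{k,i}\rest_{\bP^n(K)}$ with $g_{k,i} \in K[X]_d$, and that proof realises $\eta$ as $\hat{f}\rest$ for an ultra-product limit $f = \lim \sum_i a_{k,i} g_{k,i}$, where for each $k$ the tuple $(a_{k,i})_i$ is $v$-generic over $K$. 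By arranging that each family $(g_{k,i})_i$ includes the full monomial basis $(X^I)_I$, the coefficient tuple of $\sum_i a_{k,i} g_{k,i}$ in the $X^I$-basis is a $K$-linear image of $(a_{k,i})_i$ under a matrix containing an identity sub-block; a short direct calculation (applying \autoref{lem:ValuationGeneric}(2)) then shows this tuple is weakly $v$-generic over $K$, and weak $v$-genericity survives the ultra-product limit.

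To build the independent family $F = (f_i : i < m)$, I take $m$ fresh isomorphic copies $L_i/K$ of $(L,f)$, pairwise and jointly independent over $K$ in the sense of \autoref{sec:Definability}: their compositum is the iterated free amalgam $\Frac(L_0 \otimes_K \cdots \otimes_K L_{m-1})$, which carries a canonical valuation by the main theorem of \cite{BenYaacov:ValuedFieldTensorProduct} cited before \autoref{prp:ValuedFieldIndependence}. By construction $L_i = K(f_i)$ and the $L_i$ are mutually independent. Part~(2) is then immediate from \autoref{lem:ValuationGenericIndependent}: since the coefficient tuple of $f_i$ is weakly $v$-generic over $K$ and $L_i \ind_K \prod_{j \neq i} L_j$ (the compositum), which contains $K(f_j : j \neq i)$, the tuple remains weakly $v$-generic over $K(f_j : j \neq i)$.

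For part~(3), I apply \autoref{lem:ValuationGeneric}(1) to adjoin a $v$-generic tuple $dX$ over $K(F)$. Algebraic genericity is automatic: for every nonzero $h \in K(F)[X_0,\ldots,X_n]$ one has $v(h(dX)) = \tilde{v}(h) < \infty$, so $h(dX) \neq 0$. Goodness in the sense of \cite[Section~4]{BenYaacov:Vandermonde} combines this algebraic genericity of $dX$ with the independence structure of $F$ already set up, and is verified directly against the definition there. The main obstacle is the weak $v$-genericity of $f$ in part~(1): naive additive perturbation or multiplicative scaling does not preserve weak $v$-genericity of a coefficient tuple with mixed valuations, so one must use the structured approximation by $\min \hat{g_i}$ and the ultra-product step, together with a judicious choice of the $g_i$, to transfer genericity of the $a_i$ to the coefficients in the monomial basis.
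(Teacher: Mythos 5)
Your argument for part (1) reaches the right conclusion but by a different route, and your diagnosis of why the paper's route supposedly fails is wrong. The paper's approach is additive perturbation, and it does work: take $g_0 \in M[X]_d$ whose coefficient tuple is $v$-generic over $L$ (not merely over $K$; this is the crux, obtainable by taking a free amalgam with a field of $v$-generics over $K$ and invoking \autoref{lem:ValuationGenericIndependent}), scaled so $\tilde{v}(g_0) = 0$, and set $f' = f + g_0$. For $\xi = [x] \in \bP^n(K)$ with $\tilde{v}(x) = 0$, write $h(Y) = f(x) + \sum_I x^I Y_I \in L[Y]$; by $v$-genericity of $g_0$ over $L$ one gets $v\bigl( f'(x) \bigr) = \tilde{v}(h) = \min\bigl( v(f(x)), 0 \bigr) = v(f(x))$ because $v(f(x)) \leq 0$ (as $\sup \eta = 0$), so $\hat{f'}\rest_K = \hat{f}\rest_K$. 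The same non-cancellation gives $\tilde{v}(f') = \min\bigl( \tilde{v}(f), 0 \bigr) = \tilde{v}(f)$. And for $h \in K[Y]_e$ homogeneous, $h(f') = h'(g_0)$ with $h'(Y) = h(f + Y) \in L[Y]$ whose degree-$e$ part is $h$ itself, so $v(h(f')) = \tilde{v}(h') \leq \tilde{v}(h)$: $f'$ is weakly $v$-generic over $K$. Your concern about ``mixed valuations'' ruining things is precisely neutralised by taking the perturbation generic over the whole field $L$ containing the coefficients of $f$, which rules out any cancellation.

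Your alternative route --- reworking \autoref{prp:PolynomialTrace} by inserting the monomial basis among the $g_{k,i}$ so that the coefficient map $a \mapsto f$ has an identity sub-block, then passing genericity through the $K$-linear image --- is sound, and it is a reasonable replacement. But note that citing \autoref{lem:ValuationGeneric}(2) directly is slightly off: you are not adding a tuple from $K$, but from $K(a'')$ where $a''$ are the coordinates of $a$ outside the identity block; one must first observe that a sub-tuple of a $v$-generic tuple is $v$-generic over the field generated by the remaining coordinates, and then apply \autoref{lem:ValuationGeneric}(2) over that larger base. The specialisation argument (set the extra $Y'_i$ to $0$, obtaining $h$ back from $h'$ and thus $\tilde{v}(h') \leq \tilde{v}(h)$) is cleaner and avoids this.

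Separately, your handling of the identity $\tilde{v}(f) = -d\alpha$ only supplies the inequality $\tilde{v}(f) \leq -d\alpha$. The converse, which is what makes the $d$-width equal $\alpha$, is the Gauss-norm identity for $\bP^n$: since $K$ is algebraically closed (hence the residue field is infinite) one can find, for any homogeneous $f$, a point $\xi = [x]$ with $\tilde{v}(x) = 0$ and $v(f(x)) = \tilde{v}(f)$, so $\inf_\xi \hat{f}(\xi) = \tilde{v}(f)/d$ exactly. This is what the paper means by ``the variety here is the entire space''; citing attainment of the supremum in $w_d$ and then saying ``direct computation'' elides this step. Parts (2) and (3) as you argue them match the paper's ``immediate''.
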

\begin{proof}
  For \autoref{item:VirtualDivisorSetupExists}, choose $f$ such that $\eta = \hat{f}\rest_{\bP^n(K)}$.
  Adding $v$-generic elements (over $K$) to the coefficients of $f$ does not change $\eta$ and makes $f$ weakly $v$-generic.
  Since the variety here is the entire space, it is easy to see that $\tilde{v}(f) = d \inf \eta$.
  The construction of a field containing an independent family is essentially given in the statement.
  Items \autoref{item:VirtualDivisorSetupGeneric} and \autoref{item:VirtualDivisorSetupGood} are immediate.
\end{proof}

\begin{lem}
  \label{lem:VirtualDivisorBasis}
  Continuing \autoref{lem:VirtualDivisorSetup}, assume that $h \in K[X]_D$ is an additional polynomial, weakly $v$-generic over the prime field, so $Fh$ is again a good family (possibly $D = 0$ and $h = 1$).
  Let $\psi \subseteq \bigl[ (Fh)^{\partial n} \bigr]$ be a good set.
  Then, identifying $K[X]_{md+D-n}$ with $K^{\binom{md+D}{n}}$ via the basis $\Phi_{Fh,\psi}$, we have $\eta^* = \tilde{v} + O\bigl( \alpha - \tilde{v}(h) \bigl)$.
  In other words, there exists a constant $c$ which depends only on $n$, $d$ and $D$ (but not on $m$, $F$, or $h$) such that for all choices of coefficients $a_\xi \in L$:
  \begin{gather*}
    \left| \eta^*\left( \sum_{\xi \in \psi} a_\xi \varphi_{Fh,\psi,\xi} \right) - \min_{\xi \in \psi} v(a_\xi) \right| \leq c\bigl( \alpha - \tilde{v}(h) \bigr).
  \end{gather*}
\end{lem}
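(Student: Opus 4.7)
The strategy is to verify the two-sided bound $|\eta^*(g) - \min_\xi v(a_\xi)| \leq c(\alpha - \tilde{v}(h))$ by exploiting an ``almost orthonormal'' structure of the Vandermonde basis $\Phi_{Fh,\psi}$ from \cite{BenYaacov:Vandermonde} with respect to the sub-valuation $\eta^*$. I would begin by reducing the problem to a single diagonal estimate, namely $\bigl| \eta^*(\varphi_{Fh,\psi,\xi}) \bigr| \leq c(\alpha - \tilde{v}(h))$ for each basis element. Using the explicit description of $\varphi_{Fh,\psi,\xi}$ in terms of the product $f_0 \cdots f_{m-1} h$ divided by a Hasse-derivative factor, together with the fact that each $f_i$ satisfies $\hat{f}_i\rest_{\bP^n(K)} = \eta$, the $m$ contributions coming from $F$ exactly match $m\eta$ pointwise, so the only discrepancy between $\eta^*(\varphi_{Fh,\psi,\xi})$ and $\tilde{v}(\varphi_{Fh,\psi,\xi})$ can come from $h$ and from the Hasse correction, producing a bound controlled by $\alpha - \tilde{v}(h)$ with a constant depending only on $n$, $d$, $D$.

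For the \emph{lower bound}, by \autoref{thm:Nullstellensatz} the dual $\eta^*$ is a homogeneous sub-valuation on $K[X]$, hence in particular a $K$-vector-space valuation on $K[X]_{md+D-n}$, and the ultrametric inequality gives
\begin{gather*}
\eta^*\Bigl( \sum_{\xi \in \psi} a_\xi \varphi_{Fh,\psi,\xi} \Bigr) \geq \min_\xi \bigl( v(a_\xi) + \eta^*(\varphi_{Fh,\psi,\xi}) \bigr) \geq \min_\xi v(a_\xi) - c(\alpha - \tilde{v}(h)).
\end{gather*}
For the \emph{upper bound}, I exploit the orthogonality feature of the Vandermonde construction: for each $\xi \in \psi$ there is a test point $\xi^\circ \in \bP^n$ obtained from $\xi$ via Hasse derivation in the generic direction $dX$ from \autoref{lem:VirtualDivisorSetup}\autoref{item:VirtualDivisorSetupGood}, at which $\varphi_{Fh,\psi,\xi'}(\xi^\circ) = 0$ for all $\xi' \neq \xi$, while $v\varphi_{Fh,\psi,\xi}(\xi^\circ)$ is controlled by the diagonal estimate. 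Substituting $\xi^\circ$ into \autoref{dfn:Duals} picks out the single coefficient $a_\xi$, so
\begin{gather*}
\eta^*(g) \leq vg(\xi^\circ) - (md+D-n)\eta(\xi^\circ) = v(a_\xi) + v\varphi_{Fh,\psi,\xi}(\xi^\circ) - (md+D-n)\eta(\xi^\circ) \leq v(a_\xi) + c(\alpha - \tilde{v}(h)),
\end{gather*}
and minimising over $\xi \in \psi$ produces the desired inequality in the other direction.

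The main obstacle will be extracting from \cite{BenYaacov:Vandermonde} quantitative versions of both the diagonal estimate and the orthogonality estimate, with constants depending only on $n$, $d$, $D$ and \emph{not} on $m$. The uniformity in $m$ is the delicate point: it rests on the fact that the family $F$ is \emph{good} in the sense of that paper, and on the pairwise weak $v$-genericity of the $f_i$ established in \autoref{lem:VirtualDivisorSetup}\autoref{item:VirtualDivisorSetupGeneric}, which is precisely what prevents the $m$ successive factors from accumulating valuation loss. Once the uniformity is in hand, the rest of the argument is a straightforward combination of the ultrametric inequality and a specific point evaluation, exactly as in the two displayed chains above.
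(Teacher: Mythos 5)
Your plan matches the paper's at the level of strategy: bound $\eta^*(\varphi_{Fh,\psi,\xi})$ from below, use the ultrametric inequality for the lower bound on $\eta^*(g)$, and use the Lagrange-interpolation orthogonality of the Vandermonde basis (picking out the single coefficient $a_\xi$) for the upper bound. Both chains of inequalities you display are essentially the ones in the paper.

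However, the mechanism you propose for the diagonal estimate has a genuine flaw, and it is exactly the place you yourself flag as delicate. You write that ``the $m$ contributions coming from $F$ exactly match $m\eta$ pointwise,'' but this is false for general $\zeta \in \bP^n(L)$: since $\eta = \hat{f}_i\rest_{\bP^n(K)}$, for a point $\zeta$ over a larger field one only has the one-sided inequality $\hat{f}_i(\zeta) \geq \eta(\zeta)$ (this is \autoref{lem:VirtualDivisorDefinitionInequality}), with equality when $\zeta$ is independent of the coefficients of $f_i$ over $K$. The paper's device for getting a constant independent of $m$ is a factorization you do not mention: for each $\xi \in \psi$ one picks $G \in \binom{F}{n}$ with $\xi \in \psi_{Gh}$ and writes $\varphi_{Fh,\psi,\xi} = \varphi_{Gh,\psi,\xi}\,\prod(F\setminus G)$. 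The ``big'' factor $\prod(F\setminus G)$ contributes $\geq (m-n)d\,\eta(\zeta)$ by \autoref{lem:VirtualDivisorDefinitionInequality}, with equality at $\zeta = \xi$ because $\xi \ind_K F\setminus G$, while the ``small'' factor $\varphi_{Gh,\psi,\xi}$ has degree $D+nd-n$, independent of $m$, and its $\tilde{v}$ is controlled by Newton's polygon since its coefficients are algebraic over $dX$ and the coefficients of $Gh$ with only finitely many possible irreducible polynomials. Without this factorization you cannot separate the $m$-independent from the $m$-dependent parts, and the $m$-uniformity of the constant remains unproved; and without correcting ``exact match'' to the one-sided inequality, the argument for the lower bound on $\eta^*(\varphi_{Fh,\psi,\xi})$ is wrong as stated, since the infimum over $\zeta$ in the definition of $\eta^*$ ranges over all of $\bP^n(L)$, not just $\bP^n(K)$.

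Two smaller remarks. First, you do not actually need a two-sided bound $|\eta^*(\varphi_{Fh,\psi,\xi})| \leq c(\alpha - \tilde{v}(h))$ for your two chains to work — only the lower bound on the diagonal enters the ultrametric estimate, while the upper bound on $\eta^*(g)$ comes directly from the point evaluation. Second, the ``test point $\xi^\circ$ obtained from $\xi$ via Hasse derivation'' is a slightly off reformulation: $\xi \in \psi$ is already a multi-set element encoding the Hasse multiplicity, and the paper simply uses the (Hasse-sensitive) evaluation at $\xi$ itself; there is no new point to construct.
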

\begin{proof}
  Let us estimate $\eta^*(\varphi_{Fh,\psi,\xi})$ for $\xi \in \psi$.
  There exists some set $G \in \binom{F}{n}$ such that $\xi \in \psi_{Gh} = \psi \cap \bigl[ (Gh)^{\partial n} \bigr]$, and we have $\varphi_{Fh,\psi,\xi} = \varphi_{Gh,\psi,\xi} \prod (F\setminus G)$ and $\deg \varphi_{Gh,\psi,\xi} = D + nd - n$.
  The coefficients of $\varphi_{Gh,\psi,\xi}$ are algebraic over $dX$ and the coefficients of $Gh$, with finitely many possibilities for the irreducible polynomials (since all possibilities for $G$ are isomorphic).
  The valuations of the coefficients of these irreducible polynomials are $O\bigl( \alpha - \tilde{v}(h) \bigr)$ in absolute value, by weak $v$-genericity.
  By Newton's polygon we deduce that $\tilde{v}(\varphi_{Gh,\psi,\xi}) = O\bigl( \alpha - \tilde{v}(h) \bigr)$ as well.
  For any $\zeta \in \bP^n(L)$, we have, by \autoref{lem:VirtualDivisorDefinitionInequality}:
  \begin{gather}
    \label{eq:VirtualDivisorBasis}
    v\varphi_{Fh,\psi,\xi}(\zeta) \geq (m-n)d \eta(\zeta) + v\varphi_{Gh,\psi,\xi}(\zeta).
  \end{gather}
  Therefore,
  \begin{align*}
    \eta^*(\varphi_{Fh,\psi,\xi})
    = {} & \inf_{\zeta \in \bP^n(L)} \bigl( v\varphi_{Fh,\psi,\xi}(\zeta) - (D + md-n) \eta(\zeta) \bigr) \\
    \geq {} & \inf_{\zeta \in \bP^n(L)} \bigl( v\varphi_{Gh,\psi,\xi}(\zeta) - (D + nd-n) \eta(\zeta) \bigr) \\
    \geq {} & \tilde{v}(\varphi_{Gh,\psi,\xi}) = O\bigl( \alpha - \tilde{v}(h) \bigr).
  \end{align*}
  For the converse inequality, observe that equality holds in \autoref{eq:VirtualDivisorBasis} for $\zeta = \xi$ (since $\xi \ind_K F \setminus G$).
  If $g = \sum_{\xi \in \psi} a_\xi \varphi_{Fg,\psi,\xi}$, then:
  \begin{align*}
    \eta^*(g)
    \leq {} & \min \bigl\{ vg(\xi) - (D+md-n)\eta(\xi) : \xi \in \psi \bigr\} \\
    = {} & \min \bigl\{ v(a_\xi) + v\varphi_{Fh,\psi,\xi}(\xi) - (D+md-n)\eta(\xi) : \xi \in \psi \bigr\} \\
    = {} & \min \left\{ v(a_\xi) + v\varphi_{Gh,\psi,\xi}(\xi) - (D+nd-n)\eta(\xi) : G \in \binom{F}{n}, \ \xi \in \psi_{Gh} \right\} \\
    = {} & \min_{\xi \in \psi} v(a_\xi) + O\bigl( \alpha - \tilde{v}(h) \bigr).
  \end{align*}
  Together, these two inequalities conclude the proof.
\end{proof}

\begin{lem}
  \label{lem:VirtualDivisorDualExplicit}
  Continuing \autoref{lem:VirtualDivisorSetup}, let $h \in K[X]_D$.
  Then for any $G \in \binom{F}{n}$, and for some constant $C = C(n,d)$, we have
  \begin{gather}
    \label{eq:VirtualDivisorDualExplicitFormula}
    \eta^*(h) = \min \bigl\{ vh(\xi) - D \eta(\xi) : \xi \in [G^{\partial n}] \bigr\},
    \\
    \label{eq:VirtualDivisorDualExplicitBound}
    vh(\xi) \leq \tilde{v}(h) + C D w_d(\eta) \qquad \forall \xi \in [F^{\partial n}].
  \end{gather}
\end{lem}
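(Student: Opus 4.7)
The claim has two distinct parts, with quite different flavours. I will describe my approach to each in turn and indicate the essential obstacles.

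For \autoref{eq:VirtualDivisorDualExplicitFormula}, the inequality
\begin{gather*}
  \eta^*(h) \leq \min \bigl\{ vh(\xi) - D \eta(\xi) : \xi \in [G^{\partial n}] \bigr\}
\end{gather*}
is immediate from the definition $\eta^*(h) = \inf_{\zeta \in \bP^n} (vh(\zeta) - D\eta(\zeta))$, since each $\xi \in [G^{\partial n}]$ is an admissible test point. The substance is the reverse inequality, which says the infimum is attained (up to multiplicity) on the finite set $[G^{\partial n}]$. My approach is to analyse the type of $\xi \in [G^{\partial n}]$ over $K$. Since $G$ consists of $n$ independent, weakly $v$-generic $K$-copies of the defining polynomial $f$ of $\eta$ (by \autoref{lem:VirtualDivisorSetup}\autoref{item:VirtualDivisorSetupGeneric}), the $d^n$ intersection points are Galois-conjugate over $K(G)$ and share a single type over $K$: they are ``generic points of $\bP^n$ lying simultaneously on $n$ independent generic translates of the zero locus of $f$''. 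Because $\eta$ is definable over $K$ (\autoref{prp:PolynomialTrace}) and $h$ is fixed over $K$, both $\eta(\xi)$ and $vh(\xi)$ depend only on this type. I then invoke stability (\autoref{lem:ValuationGenericIndependent} and \autoref{prp:ValuedFieldIndependence}) to identify this common value with the infimum defining $\eta^*(h)$: the key point is that $\xi$ realises the type of a point where $\eta$ attains $\sup \eta$ through the definable extension, so $vh(\xi) - D\eta(\xi)$ captures the extremum in the definition of $\eta^*$.

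For \autoref{eq:VirtualDivisorDualExplicitBound}, the argument is much closer to the computation already carried out in \autoref{lem:VirtualDivisorBasis}. Given $\xi \in [F^{\partial n}]$, choose $G \in \binom{F}{n}$ with $\xi \in [G^{\partial n}]$; then $\xi$ is algebraic over $K(G, dX)$, with minimal polynomials whose coefficients are polynomial combinations of the coefficients of $G$ and $dX$. By weak $v$-genericity of each $f_i \in G$ over the remaining $f_j$'s (and algebraic genericity of $dX$), the valuations of these coefficients are controlled up to $O\bigl(w_d(\eta)\bigr)$, so Newton's polygon yields $|\tilde{v}(\xi) - 0| = O(w_d(\eta))$ on an appropriate representative. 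Evaluating $h$ by the ultra-metric inequality,
\begin{gather*}
  vh(\xi) = v\Bigl(\sum_{\alpha} h_\alpha \xi^\alpha\Bigr) \leq \tilde{v}(h) + D \cdot O(w_d(\eta)),
\end{gather*}
since $h$ has degree $D$ in $\xi$, yielding the claimed bound with a constant $C = C(n,d)$ depending only on the algebraic complexity of forming the intersection (number of summands, sizes of multiindices).

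\textbf{Main obstacle.} The subtle point is the reverse inequality in (1). One must be careful to check that the definable-by-$K$ value of $\eta$ at a point of $[G^{\partial n}]$ really coincides with the extremum appearing in the definition of $\eta^*(h)$, rather than with the generic value of $\eta$ on $\bP^n$; this is a genuine type-theoretic identification, making essential use of the fact that the $f_i$ are $K$-isomorphic copies of $f$ and that $dX$ is algebraically generic (so that Hasse derivations capture intersection multiplicities faithfully). The appeal to the Vandermonde machinery of \cite{BenYaacov:Vandermonde} is what packages this type-theoretic fact into a usable intersection-theoretic identity.
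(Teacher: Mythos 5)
Your proposal takes a genuinely different route from the paper on both halves, and unfortunately both halves have gaps.

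For \autoref{eq:VirtualDivisorDualExplicitFormula}, your easy direction is fine (modulo the observation, via \autoref{cor:NullstellensatzFieldExtension}, that $\eta^*$ is unchanged under scalar extension from $K$ to the field containing $F$ and $dX$, which is what makes the points of $[G^{\partial n}]$ ``admissible test points''). But for the reverse inequality you correctly flag the obstacle and then do not overcome it: ``invoke stability to identify this common value with the infimum'' and ``$\xi$ realises the type of a point where $\eta$ attains $\sup\eta$'' are gestures, not an argument, and the second one is not even the right target (the infimum defining $\eta^*(h)$ is of $vh - D\eta$, not of $-\eta$; the competing effects of $vh$ and $D\eta$ need not both be extremal at the same points). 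The paper proves this by an entirely different mechanism: it sets $h_k = h^{kd} X_0^{n(d-1)}$ of degree $m_k d - n$ with $m_k = kD+n$, sandwiches $\frac{D\eta^*(h_k)}{m_k d}$ between $\eta^*(h) + O(1/k)$ from the sub-valuation inequality and from an approximating sequence $\xi_\ell$ normalised so that $\widehat{X}_0(\xi_\ell) = 0$, then computes $\eta^*(h_k)$ exactly (up to $O(1)$) in the basis $\Phi_{F_k,\psi}$ of \autoref{lem:VirtualDivisorBasis}, and lets $k \to \infty$. None of this appears in your sketch, and your type-theoretic alternative is not developed to the point where one could check it.

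For \autoref{eq:VirtualDivisorDualExplicitBound} there is a concrete error. After controlling $\tilde{v}(x)$ for a representative $x$ of $\xi$, you write
\begin{gather*}
  vh(\xi) = v\Bigl(\sum_{\alpha} h_\alpha x^\alpha\Bigr) \leq \tilde{v}(h) + D \cdot O(w_d(\eta)),
\end{gather*}
citing the ultra-metric inequality. But the ultra-metric inequality goes the other way: $v\bigl(\sum h_\alpha x^\alpha\bigr) \geq \min_\alpha\bigl( v(h_\alpha) + v(x^\alpha)\bigr) \geq \tilde{v}(h) + D\tilde{v}(x)$. It gives a \emph{lower} bound; cancellation can make the actual valuation arbitrarily large, so controlling $\tilde{v}(x)$ is not enough to bound $v(h(x))$ from above. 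The paper gets the upper bound globally, not pointwise: it bounds the \emph{total} $\sum_{\zeta} vh(\zeta) = v(h \wedge G^{\partial n}) - D\tilde{v}(G^{\partial n})$ from above by pushing through a valuation-non-decreasing ring map $s$ that replaces each $f_i$ by a product of $v$-generic linear forms, so that $v(h \wedge G^{\partial n}) \leq N\tilde{v}(h)$, and bounds $\tilde{v}(G^{\partial n}) \geq -C w_d(\eta)$ via Newton's polygon; then it subtracts the trivial lower bound $vh(\zeta) \geq \tilde{v}(h)$ at the $N-1$ other points of $[G^{\partial n}]$ to isolate the single term $vh(\xi)$. You would need some such indirect mechanism to get an upper bound on a value of a $K$-polynomial at an algebraic (not $v$-generic) point.
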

\begin{proof}
  Let $h_k = h^{kd} X_0^{n(d-1)}$, of degree $m_k d - n$, where $m_k = kD + n$.
  In what follows, for $O(\cdot)$ notation, we consider all the data fixed, with the exception of $k$, which may vary.
  On the one hand, since $\eta^*$ is a sub-valuation, we have
  \begin{gather*}
    \eta^*(h_k) \geq n(d-1) \eta^*(X_0) + dk \eta^*(h) = \frac{m_k d}{D} \eta^*(h) + O(1).
  \end{gather*}
  On the other hand, in $\eta^*(h) = \inf_\xi vh(\xi) - D \eta(\xi)$, the infimum is obtained along some sequence of $\xi_\ell$.
  For each $\ell$ there is some $i \leq n$ such that $\widehat{X}_i(\xi_\ell) = 0$ (i.e., $\tilde{v}(x_\ell) = v(x_{\ell,i})$).
  Possibly passing to a sub-sequence and permuting coordinates, we may assume that $\widehat{X}_0(\xi_\ell) = 0$ throughout.
  But then,
  \begin{gather*}
    \eta^*(h_k) \leq \min_\ell \ \bigl[ vh_k(\xi_\ell) - (m_k d - n)\eta(\xi_\ell) \bigr] = \frac{m_k d}{D} \eta^*(h) + O(1).
  \end{gather*}
  It follows that
  \begin{gather*}
    \lim_{k \rightarrow \infty} \frac{D \eta^*(h_k)}{m_k d} = \eta^*(h).
  \end{gather*}

  Fixing $k$ for a while, let $F = F_k = (f_i : i < m_k)$, let $\psi \subseteq [F^{\partial n}]$ be good, and let $h_k = \sum_{\xi \in \psi} a_\xi \varphi_{F,\psi,\xi}$.
  For each $\xi = [x] \in [F^{\partial n}]$ there is some $G_\xi \in \binom{F}{n}$ such that $\xi \in [G_\xi^{\partial n}]$.
  Let us identify $F$ and $G_\xi$ with their products, so $F = (F/G_\xi) G_\xi$.
  Since $\partial_j G_\xi(x)$ vanish for $j < n$, we have $\partial_n F(x) = (F/G_\xi)(x) \partial_n G_\xi(x)$, and therefore $v \partial_n F(\xi) = kDd \eta(\xi) + v \partial_n G_\xi(\xi)$.
  Therefore, letting $k$ vary and keeping everything else fixed, we have
  \begin{align*}
    \eta^*(h_k)
    = {} & \min_{\xi \in \psi} v(a_\xi) + O(1) \\
    = {} & \min_{\xi \in \psi} \bigl( vh_k(\xi) - v\partial_n F_k (\xi) \bigr) + O(1) \\
    = {} & \min_{\xi \in \psi} \bigl( vh_k(\xi) - kDd \eta(\xi) - v\partial_n G_\xi (\xi) \bigr) + O(1).
  \end{align*}
  The constant in $O(1)$ does not depend on the choice of good set $\psi$, so we may take the minimum over $\xi \in [F^{\partial n}]$.
  But then, by symmetry over $K$, we have for $G = \{f_0 : i < n\}$ (of any other $n$-family of the $f_i$):
  \begin{align*}
    \eta^*(h_k)
    = {} & \min_{\xi \in [G^{\partial n}]} \bigl( vh_k(\xi) - kDd \eta(\xi) - v\partial_n G (\xi) \bigr) + O(1) \\
    = {} & \min_{\xi \in [G^{\partial n}]} \bigl( m_kd vh(\xi) / D - m_k d \eta(\xi) \bigr) + O(1) \\
    = {} & \frac{m_k d}{D} \min_{\xi \in [G^{\partial n}]} \bigl( vh(\xi) - D \eta(\xi) \bigr) + O(1).
  \end{align*}
  Therefore,
  \begin{gather*}
    \lim_{k \rightarrow \infty} \frac{D \eta^*(h_k)}{m_k d} = \min_{\xi \in [G^{\partial n}]} \bigl( vh(\xi) - D \eta(\xi) \bigr).
  \end{gather*}
  This proves \autoref{eq:VirtualDivisorDualExplicitFormula}.

  For \autoref{eq:VirtualDivisorDualExplicitBound}, let $\xi \in [F^{\partial n}]$, so $\xi \in [G^{\partial n}]$ for some $G \in \binom{F}{n}$, and let $g = \prod G$.
  Let $N = n! \binom{nd}{n}$, which is the cardinal of the multi-set $[G^{\partial n}]$, and let $C = \sum_{i < n} \frac{d n N}{nd-i}$.
  We have $\tilde{v}(g) = n \tilde{v}(f_0) = -d n w_d(\eta)$.
  For $i < n$, the degree of $\partial_i g$ in $G^{\partial n}$ is $N/(nd-i)$, so the total degree of $g$ in $G^{\partial n}$ is $C/dn$.
  It follows that $\tilde{v}(G^{\partial n}) \geq -C w_d(\eta)$.

  Let $A = K(dX, G)$ (i.e., the $K$-algebra generated by $dX$ and the coefficients of $G$), and let $B = K[dX, \lambda_{i,j} : i<n, j < d]$, where the $\lambda_{i,j}$ are indeterminate linear forms (again, adjoining $\lambda_{i,j}$ means adjoining its coefficients).
  We equip $B$ with the valuation $v_B = \tilde{v}$, i.e., we make the (coefficients of) $\lambda_{i,j}$ $v$-generic over $K$.
  Finally, we define a map $s\colon A \rightarrow B$ fixing $K[dX]$ and sending $f_i \mapsto \prod_j \lambda_{i,j}$ (we assume, as we may, that $G = (f_i : i < n)$).
  Since each $f_i$ is weakly $v$-generic over everything else, and $\tilde{v}(\lambda_{i,j}) \geq 0$, we have $v_A \leq v_B \circ s$.
  In addition, $s(G^{\partial n})$ is then $n!$-power of the product of formal intersections of $n$ among the $\lambda_{i,j}$, i.e., a product of $N$ many $v$-generic algebraic points (times some scalar which is a power of $dX$ and can be ignored).
  Therefore,
  \begin{gather*}
    v(h \wedge G^{\partial n})
    \leq v_B\left( h \wedge \left( \prod_{i,j} \lambda_{i,j} \right)^{\partial n} \right)
    = N \tilde{v}(h).
  \end{gather*}
  Since $vh(\zeta) \geq \tilde{v}(h)$,
  \begin{align*}
    vg(\xi)
    \leq {} & \sum_{\zeta \in [G]^{\partial n}} vg(\zeta) - (N-1) \tilde{v}(h)
    \\
    = {} & v(h \wedge G^{\partial n}) - D \tilde{v}(G^{\partial n}) - (N-1) \tilde{v}(h)
    \\
    \leq {} & N \tilde{v}(h) + C D w_d(\eta) - (N-1) \tilde{v}(h)
    \\
    = {} & \tilde{v}(h) + C D w_d(\eta),
  \end{align*}
  as claimed.
\end{proof}

We can now start proving our main result, in several steps.
The first step is presented mostly for expository purposes, in order to present the main ingredients in a simpler setting.

\begin{lem}
  \label{lem:MainResultPn}
  \autoref{thm:MainResult} holds when $W = \bP^n$.

  Moreover, there is no need to adjoin the indeterminates $T$ or assume that the valuation is good.
\end{lem}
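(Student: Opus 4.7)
By \autoref{lem:MainResultObservations}, we may assume $\sup \eta = 0$, and set $\alpha := w_d(\eta) \geq 0$. \autoref{lem:VirtualDivisorSetup} produces, in some extension $L/K$, a weakly $v$-generic $f \in L[X]_d$ with $\eta = \hat{f}\rest_{\bP^n(K)}$ and $\tilde{v}(f) = -d\alpha$, an independent family $F = (f_i : i < k)$ of weakly $v$-generic copies of $f$, and an algebraically generic Hasse direction $dX$. I restrict to those $m$ of the form $m = kd - n$ (an infinite arithmetic progression), and for each such $m$ pick a good set $\psi \subseteq [F^{\partial n}]$ of cardinality $\binom{kd}{n} = \dim K[X]_m$; I then declare $B_m := \Phi_{F,\psi}$, which is a basis of $K[X]_m$ by \autoref{lem:VirtualDivisorBasis}.

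The core of the argument is to compute $\vol_{B_m}\eta^*$ explicitly and match it to $-mN\eta^{\wedge n+1}/(n+1)$, where $N = \dim K[X]_m$. By \autoref{lem:Volume}\autoref{item:VolumeExplicit}, $\vol_{B_m}\eta^*$ is a telescoping sum of quotient valuations $(\eta^*/F_\xi)(\varphi_{F,\psi,\xi})$, one for each $\xi \in \psi$, where $F_\xi$ is the span of the $\varphi_{F,\psi,\xi'}$ with $\xi'$ preceding $\xi$ in any fixed enumeration. Each quotient value will be unpacked via \autoref{lem:VirtualDivisorDualExplicit}: $\eta^*(\varphi_{F,\psi,\xi})$ equals an infimum over $[G^{\partial n}]$ of $v\varphi_{F,\psi,\xi}(\zeta) - m\eta(\zeta)$, and substituting the factorization $\varphi_{F,\psi,\xi} = \varphi_{G_\xi,\psi,\xi}\prod(F\setminus G_\xi)$ turns this into a combination of $\eta$-values of points in $[F^{\partial n}]$ and Hasse-derivative factors, with the error in $w_d(\eta)$ controlled by the bound displayed in \autoref{lem:VirtualDivisorDualExplicit}.

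Summing over $\xi$ and applying the Vandermonde/resultant identities of \cite[Section~4]{BenYaacov:Vandermonde} reassembles the sum into the formal $(n+1)$-fold intersection $f_0 \wedge \cdots \wedge f_n \wedge \fC_{\bP^n}$ scaled by the combinatorial factor $mN/(n+1)$; by \autoref{lem:ChowFormDistance} and \autoref{dfn:VirtualChain} this is precisely $-mN\eta^{\wedge n+1}/(n+1)$, up to additive error. Bookkeeping of the Hasse-direction and Vandermonde normalization scalars produces a tuple $a$ in $K$ (independent of $\eta$) such that the aggregated error is bounded by $mN\gamma_m(\tilde{v}(a) - w_d(\eta))$ with $\gamma_m \to 0$ at a rate determined only by $n$ and $d$.

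Since $W = \bP^n$ has a canonical Chow form depending only on $n$, the whole construction is intrinsic to $K$: no auxiliary indeterminate tuple $T$ need be adjoined, and the goodness hypothesis of \autoref{thm:MainResult} plays no role. The main obstacle is the Vandermonde step: matching, up to a $\gamma_m \to 0$ error, the sum of Hasse-normalized local contributions to the global self-intersection $\eta^{\wedge n+1}$, with the correct numerical prefactor $mN/(n+1)$ and with all residual inputs absorbed cleanly into the linear combination $\gamma_m(\tilde{v}(a) - w_d(\eta))$.
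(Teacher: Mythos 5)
Your plan has a genuine structural gap in the choice of $B_m$. You set $B_m := \Phi_{F,\psi}$, but by \autoref{lem:VirtualDivisorBasis} (with $h = 1$), the dual sub-valuation $\eta^*$ agrees with $\tilde{v}$ in this basis up to an additive error $O\bigl(w_d(\eta)\bigr)$ on each element, so $\vol_{\Phi_{F,\psi}}\eta^* = O\bigl(N\, w_d(\eta)\bigr) = O(m^n)$. This is precisely the \emph{error} term of the estimate, not the main term: the quantity you want to recover is $-\frac{mN}{n+1}\eta^{\wedge n+1} = O(m^{n+1})$, a full order of magnitude larger. No amount of re-expanding the telescoping sum via \autoref{lem:Volume}\autoref{item:VolumeExplicit} and \autoref{lem:VirtualDivisorDualExplicit} will make the $m^{n+1}$ term appear out of a sum that is bounded by $m^n$. (There is also a secondary issue: $\Phi_{F,\psi}$ has coefficients in the extension $L$, so it is not a basis of $K[X]_m$ as the statement of \autoref{thm:MainResult} requires.)

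The paper avoids both problems by taking $B_m = \fM_{md-n}$, the monomial basis (defined over $\bZ$, hence over $K$), and relating it to $\Phi_{F,\psi}$ through \autoref{lem:Volume}\autoref{item:VolumeBasisChange} and \autoref{item:VolumeScalarExtension}:
\begin{gather*}
  \vol_{\fM_{md-n}}\eta^* \;=\; \vol_{\Phi_{F,\psi}}\eta^* \;-\; v\bigl(\det\Phi_{F,\psi}\bigr).
\end{gather*}
The first summand is $O(m^n)$ as above; the main term comes entirely from the determinant, which is expanded via the Vandermonde identity
\begin{gather*}
  \det\Phi_{F,\psi} \;=\; \prod_{H\in\binom{F}{n+1}} H^\wedge \;\cdot\; \prod_{G\subseteq F,\ |G|\le n}\widehat{\fd}_{G,\psi},
\end{gather*}
with the second product contributing $O(m^n)$ and the first yielding $d^{n+1}\binom{m}{n+1}\,\eta^{\wedge n+1}$, which is $\frac{(md-n)N}{n+1}\eta^{\wedge n+1} + O(m^n)$. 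So the Vandermonde identity is indeed the engine of the proof, as you anticipate, but it must be applied to the \emph{transition determinant} between the two bases, not to the volume in the $\Phi_{F,\psi}$ basis itself. Your plan would need to be rerouted accordingly before the bookkeeping you describe can carry through.
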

\begin{proof}
  We may assume that $\fC_{\bP^n}$ is the one with the canonical normalisation (otherwise just multiply some vector in each basis by an appropriate scalar).
  We will show that \autoref{eq:MainResult} holds for degrees of the form $md-n$, for $m \geq 2n$, where $B_{md-n} = \fM_{md-n}$ is the set of monomials.
  Throughout, let $m \geq n$ and $N = \dim K[X]_{md-n} = \binom{md}{n}$.

  By \autoref{lem:MainResultObservations}, we may assume that $\sup \eta = 0$, and re-scaling, we may assume that $w_d(\eta) = 1$.
  Let $L$, $dX$, and $F = (f_i : i < m)$ be as per \autoref{lem:VirtualDivisorSetup}.
  In particular, $F$ is good, so let $\psi \subseteq [F^{\partial n}]$ be a good set.
  Identifying $L[X]_{md-n}$ with $L^N$ via the basis $\Phi_{F,\psi}$, we have $\eta^* = \tilde{v} + O(1)$, and so $\vol_{\Phi_{F,\psi}} \eta^* = O(N) = O(m^n)$.

  By \cite[Theorem~4.15]{BenYaacov:Vandermonde}:
  \begin{gather*}
    \det \Phi_{F,\psi} = \fd_{F,\psi} = \prod_{H \in \binom{F}{n+1}} H^\wedge \prod_{G \subseteq F, \ |G| \leq n} \widehat{\fd}_{G,\psi}.
  \end{gather*}
  Since $m \geq 2n$, we have $\binom{m}{k} \leq \binom{m}{n}$ for $k \leq n$, so we have fewer than $(n+1) \binom{m}{n} = O(m^n)$ factors of the form $\widehat{\fd}_{G,\psi}$.
  Each $\widehat{\fd}_{G,\psi}$ is algebraic over $dX$ and the coefficients of $G$, with only finitely many possibilities for its irreducible polynomial over them.
  Since all the $f_i$ are $O(1)$-generic over $K$, we have $\bigl| v(\widehat{\fd}_{G,\psi}) \bigr| = O(1)$ (by Newton's polygon).
  Therefore
  \begin{gather*}
    v(\det \Phi_{F,\psi})
    = \sum_{H \in \binom{F}{n+1}} v(H^\wedge) + O(m^n)
    = d^{n+1} \binom{m}{n+1} \eta^{\wedge n+1} + O(m^n).
  \end{gather*}
  We have $\frac{(md-n)N}{n+1} = \binom{md}{n+1} = d^{n+1} \binom{m}{n+1} + O(m^n)$, and $\eta^{\wedge n+1} = O(1)$ (since $-1 \leq \eta \leq 0$), so:
  \begin{gather*}
    d^{n+1} \binom{m}{n+1} \eta^{\wedge n+1} = \frac{(md-n)N}{n} \eta^{\wedge n+1} + O(m^n).
  \end{gather*}

  We conclude that
  \begin{align*}
    \vol_{\fM_{md-n}} \eta^*
    = {} & \vol_{\Phi_{F,\psi}} \eta^* - v(\det \Phi_{F,\psi}) \\
    = {} & - d^{n+1} \binom{m}{n+1} \eta^{\wedge n+1} + O(m^n) \\
    = {} & - \frac{(md-n)N}{n+1} \eta^{\wedge n+1} + O(m^n).
  \end{align*}
  Dividing by $(md-n)N$ we obtain the desired estimate.
\end{proof}

In the second step, we consider a hypersurface.
The argument follows a similar path, with several added technical complications.

\begin{lem}
  \label{lem:MainResultHypersurface}
  \autoref{thm:MainResult} holds when $W \subseteq \bP^n$ is a hypersurface.

  Moreover, there is no need to adjoin the indeterminates $T$ or assume that the valuation is good.
\end{lem}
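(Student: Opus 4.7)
The plan is to parallel the proof of \autoref{lem:MainResultPn}, now adjoining the defining polynomial of $W$ to the good family. Let $P \in K_0[X]$ be the irreducible homogeneous polynomial of degree $D$ with $W = V(P)$; for a hypersurface $\widehat{\fC}_W$ coincides with $\widehat{P}$ up to a normalisation constant depending only on $W$, which we absorb into $\tilde{v}(a)$. By \autoref{lem:MainResultObservations} we reduce to $\sup \eta = 0$ and, after rescaling the valuation, $w_d(\eta) = 1$. We establish \autoref{eq:MainResult} at the infinitely many degrees $M = md + D - n$ with $m \geq 2n$. No indeterminates $T$ need be adjoined and no goodness hypothesis on the valuation is needed, because $P$ has coefficients in the prime field and is therefore automatically weakly $v$-generic, so it fits the hypotheses of \autoref{lem:VirtualDivisorBasis}.

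Apply \autoref{lem:VirtualDivisorSetup} to obtain a direction $dX$ and a good independent family $F = (f_i : i < m) \subseteq L[X]_d$ realising $\eta$. Since $FP$ is still a good family, fix a good set $\psi \subseteq [(FP)^{\partial n}]$; by \autoref{lem:VirtualDivisorBasis} (with $h = P$), $\vol_{\Phi_{FP,\psi}} \eta^* = O(m^n)$ with implicit constant depending only on $n, d, D$. Split $\psi = \psi_0 \sqcup \psi_1$, where $\psi_1$ consists of those $\xi \in [G^{\partial n}]$ with $G \in \binom{F}{n}$ (so $P \notin G$). The key observation is the identity
\[ \varphi_{FP,\psi,\xi} = \varphi_{G,\psi,\xi}\prod(FP \setminus G) = P \cdot \varphi_{G,\psi,\xi}\prod(F \setminus G) \qquad (\xi \in \psi_1), \]
showing that $\{\varphi_{FP,\psi,\xi} : \xi \in \psi_1\}$ lies in $P \cdot K[X]_{md-n}$; since $|\psi_1| = \binom{md}{n}$ equals the dimension of that subspace, it is in fact a basis thereof, and the images of $\{\varphi_{FP,\psi,\xi} : \xi \in \psi_0\}$ form a basis of $K[W]_M$. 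We take $B_M$ to be any fixed $K_0$-basis of $K[W]_M$; the change of basis to the $\psi_0$-basis is uniformly bounded (by \autoref{cor:NullstellensatzUniformGeneration}) and is absorbed into $\tilde{v}(a)$.

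Combine \autoref{lem:Volume}\autoref{item:VolumeQuotient} with \cite[Theorem~4.15]{BenYaacov:Vandermonde} applied to $\Phi_{FP,\psi}$: the factorisation $\det \Phi_{FP,\psi} = \prod_{H \in \binom{FP}{n+1}} H^\wedge \cdot \prod_{G \subseteq FP,\ |G| \leq n} \widehat{\fd}_{G,\psi}$ splits the $H^\wedge$ factors according to whether $P \in H$. Those with $H \subseteq F$ contribute $\binom{m}{n+1} d^{n+1} \eta^{\wedge n+1} + O(m^n)$; those with $H = \{P\} \cup G$, $G \in \binom{F}{n}$, contribute $\binom{m}{n} D d^n (\eta^{\wedge n} \wedge \widehat{\fC}_W) + O(m^{n-1})$; and the $\widehat{\fd}_{G,\psi}$ factors contribute $O(m^n)$ in total, exactly as in the $\bP^n$ case. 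Now apply \autoref{lem:MainResultPn} to $K[X]_{md-n}$ (pulled back along multiplication by $P$, with the induced valuation $g \mapsto \eta^*(Pg)$) to compute $\vol_{\psi_1}(P \cdot K[X]_{md-n}, \eta^*)$: its leading $\eta^{\wedge n+1}$ term cancels the $H \subseteq F$ contribution after subtraction via the combinatorial identity $\binom{md+D}{n+1} - \binom{md}{n+1} = D\binom{md}{n} + O(m^{n-1})$, leaving $\vol_{B_M} \eta^* \geq -\binom{m}{n} D d^n (\eta^{\wedge n} \wedge \widehat{\fC}_W) + O(m^n)$. Division by $M \dim K[W]_M \sim D d^n m^n/(n-1)!$ yields \autoref{eq:MainResult}. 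The main obstacle is this last subtraction: one must control the defect $\eta^*(Pg) - \eta^*(g) - \eta^*(P)$ caused by sub-multiplicativity (rather than multiplicativity) of $\eta^*$, which \autoref{lem:VirtualDivisorDualExplicit} bounds uniformly by $O(w_d(\eta))$, and verify that the $\eta^{\wedge n+1}$ leading terms genuinely cancel to lower order so that only the intersection-theoretic term $\eta^{\wedge n} \wedge \widehat{\fC}_W$ survives with the correct coefficient.
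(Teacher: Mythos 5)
Your overall blueprint — adjoin the defining polynomial to the good family $F$, split the good set $\psi$ according to whether the $n$-element subset $G$ contains it, and subtract the $P\cdot K[X]_{md-n}$ contribution — follows the same geometric idea as the paper. But two steps are not merely incomplete; they cannot work as stated. First, the claim that ``$P$ has coefficients in the prime field and is therefore automatically weakly $v$-generic'' is wrong: the defining polynomial is defined over $K_0$, not the prime field, and in any case \emph{no} fixed tuple lying in a field $K'$ is weakly $v$-generic over $K'$ — by \autoref{dfn:ValuationGeneric} one would need $v(h(a)) \leq \tilde{v}(h)$ for every $h \in K'[X]$, yet $h = X_i - a_i$ already yields $v(h(a)) = \infty$. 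This is exactly why the paper's proof first normalises $\tilde v(g)$ using \autoref{lem:VirtualDivisorDualExplicit}, then perturbs $g$ to $h = g + g_0$ with $g_0$ $v$-generic over $K_0$, and invokes \autoref{eq:VirtualDivisorDualExplicitBound} to argue that $g$ and $h$ agree along $[F^{\partial n}]$, so that the replacement is harmless. Skipping this entirely leaves \autoref{lem:VirtualDivisorBasis} inapplicable.

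Second, and more structurally, the subtraction you propose cannot be carried out with $\eta^*$ itself. Since $P \in I(W)$, every $Pg$ also lies in $I(W)$, so by \autoref{dfn:Duals} we have $\eta^*(Pg) = \inf_{\xi\in W}\bigl(v(Pg)(\xi) - (\deg Pg)\,\eta(\xi)\bigr) = \infty$; the ``induced valuation $g \mapsto \eta^*(Pg)$'' you want to feed into \autoref{lem:MainResultPn} is identically $\infty$, and $\vol_{\psi_1}(P\cdot K[X]_{md-n},\eta^*)$ is not a finite quantity from which anything can be subtracted. The paper resolves this by first constructing a virtual divisor $\theta$ on all of $\bP^n$ with $\theta\rest_W = \eta$; the dual $\theta^*$ is then a sub-valuation on $K[X]$ which \emph{is} finite on $P\cdot K[X]$, and the quotient inequality $\eta^*_m \geq \theta^*_m/\,g K[X]_{m-D}$ together with \autoref{lem:Volume}\autoref{item:VolumeQuotient} gives the finite comparison $\vol_{B_m}\eta^* \geq \vol_{\fM_m}\theta^* - \vol_{g\fM_{m-D}}\theta^*$. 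Your argument also relies on an unjustified assertion that a fixed $K_0$-basis of $K[W]_M$ is within uniformly bounded change-of-basis of the $\psi_0$-basis (whose elements depend on $F$, hence on $\eta$); the paper sidesteps this by declaring $B_m$ to be the image of a $\pm1$-determinant monomial completion of $g\fM_{m-D}$, making the basis independent of $\eta$ and the valuation.
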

\begin{proof}
  We have $\fC_W = g \wedge \fC_{\bP^n}$, with $g \in K_0[X]_D$ being the irreducible polynomial defining $W$.
  Notice that then $\tilde{v}(g) = \tilde{v}(\fC_W)$, and we shall identify $\widehat{g}$ with $\widehat{\fC}_W$.

  For $m \geq D$ we have $K[W]_m = K[X]_m / g K[X]_{m-D}$, for which we construct a basis $B_m$ as follows.
  The set of monomials $\fM_m$ is a basis for $K[X]_m$, while $g \fM_{m-D}$ is a basis for $g K[X]_{m-D}$.
  We define $B^0_m \subseteq K[X]_m$ to be any set which completes $g \fM_{m-D}$ to a basis for $K[X]_m$, such that the transition matrix to $\fM_m$ has determinant $\pm 1$.
  Its image $B_m$ in $K[W]_m$ is a basis there.
  We also let the tuple $a$ be $(1,g)$, so $\tilde{v}(a) = 0 \wedge \tilde{v}(g)$.

  By \autoref{lem:MainResultObservations}, we may assume that $\sup \eta = 0$, and re-scaling, we may assume that $w_d(\eta) = 1$.
  It is then easy to construct $\theta$ of degree $d$ on $\bP^n$ such that $\eta = \theta\rest_W$, $\sup \theta = 0$ and $w_d(\theta) = -\inf \theta = w_d(\eta)$.
  Then $\theta^*$ is a sub-valuation on $K[X]$, and on $K[W]_m$ we have $\theta^*_m / g K[X]_{m-D} \leq \eta^*_m$ (since $\theta^*_m \leq \eta^*_m$ by definition, and $g K[X]_{m-D} \subseteq \ker \eta^*_m$).
  By \autoref{lem:Volume}\autoref{item:VolumeQuotient}:
  \begin{gather*}
    \vol_{B_m} \eta^*_m \geq \vol_{B_m} \bigl( \theta^*_m / g K[X]_{m-D} \bigr) = \vol_{\fM_m} \theta^*_m - \vol_{g \fM_{m-D}} \theta^*_m.
  \end{gather*}
  On the other hand, $\eta^{\wedge n} \wedge \widehat{g} = \theta^{\wedge n} \wedge \widehat{g}$.

  We will show that \autoref{eq:MainResult} holds for degrees of the form $md+D-n$, for $m \geq n$.
  Throughout, let $M = \dim K[W]_{md+D-n} = \binom{md+D}{n} - \binom{md}{n}$.
  Then we want to show that:
  \begin{gather}
    \label{eq:MainResultHypersurface}
    \frac{\vol_{\fM_{md+D-n}} \theta^* - \vol_{g \fM_{md-n}} \theta^*}{(md+D-n) M} + \frac{\theta^{\wedge n} \wedge \widehat{g}}{n} \geq \frac{\gamma}{m}\bigl( v(a) - w_d(\eta) \bigr).
  \end{gather}
  Let $L$, $dX$, and $F = (f_i : i < m)$ correspond to $\theta$ as per \autoref{lem:VirtualDivisorSetup}.

  Let us see how \autoref{eq:MainResultHypersurface} changes when we multiply $g$ by a constant $b \in K$.
  We have
  \begin{gather*}
    \frac{\theta^{\wedge n} \wedge \widehat{bg}}{n}
    = \frac{\theta^{\wedge n} \wedge \widehat{g} + v(b)/D}{n}
    = \frac{\theta^{\wedge n} \wedge \widehat{g}}{n} + \frac{v(b)}{nD},
    \\
    \vol_{b g \fM_{md-n}} \theta^*
    = \vol_{g \fM_{md-n}} \theta^* + \binom{md}{n} v(b).
  \end{gather*}
  Observe that
  \begin{align*}
    (md+D-n)M
    = {} & (n+1) \binom{md+D}{n+1} - (n+1)\binom{md}{n+1} - D\binom{md}{n}
    \\
    = {} & nD \binom{md}{n} + (n+1)\left[ \binom{md+D}{n+1} - \binom{md}{n+1} - D\binom{md}{n} \right]
    \\
    = {} & nD \binom{md}{n} + (n+1) \sum_{i<D} \ \left[ \binom{md+i}{n} - \binom{md}{n} \right],
  \end{align*}
  so for some constants $c,c'$:
  \begin{gather}
    \label{eq:MainResultHypersurfaceM}
    (md + D - n)M - nD \binom{md}{n} \in [0, cm^{n-1}],
    \qquad
    \frac{(md + D - n)M - nD \binom{md}{n}}{nD (md+D-n)M} \in [0, c'/m].
  \end{gather}
  Now choose $b$ such that $v(b) = -\tilde{v}(g)-CD$, where $C = C(n,d)$ is as per \autoref{lem:VirtualDivisorDualExplicit}.
  If $v(b) \leq 0$, then the left hand side of \autoref{eq:MainResultHypersurface} decreases, and $\tilde{v}(g) \geq -CD$, so $\tilde{v}(a)$ in the right hand side changes by at most $CD$.
  And if $v(b) \geq 0$, then $\tilde{v}(g) \leq - CD$, and now the left hand side increases by at most $v(b) O(m^{-1})$, and $\tilde{v}(a)$ in the right hand side increases by $v(b)$.
  In either case, choosing $\gamma$ large enough, we may reduce to the case where $\tilde{v}(g) = -CD$.

  For the purpose of calculating the left hand side, we may adjoin to $K$ new $v$-generic elements, so let $g_0 \in K[X]_D$ be $v$-generic over $K_0$, and let $h = g + g_0$.
  Then, by \autoref{lem:VirtualDivisorDualExplicit}, we have $vg(\xi) = v h (\xi)$ for each $\xi \in [F^{\partial n}]$, so $v(g\chi)(\xi) = v (h \chi) (\xi)$ for every polynomial $\chi \in K[X]_{md-n}$ and every $\xi \in [F^{\partial n}]$, and therefore $\theta^*(g\chi) = \theta^*(h \chi)$.
  We conclude that $\vol_{g \fM_{md-n}} \theta^* = \vol_{h \fM_{md-n}} \theta^*$, and since $[f_0 \wedge \ldots \wedge f_{n-1}] \subseteq [F^{\partial n}]$, also $\theta^{\wedge n} \wedge \widehat{g} = \theta^{\wedge n} \wedge \hat{h}$.
  We may therefore replace $g$ with $h$, and assume that $g$ is also weakly $v$-generic (over the prime field).

  We have thus reduced to the hypotheses \autoref{lem:VirtualDivisorBasis}.
  In particular, $Fg$ is a good family, and let us fix a good set $\psi \subseteq \bigl[ (Fg)^{\partial n} \bigr]$.
  Then
  \begin{gather*}
    \frac{\det \Phi_{Fg,\psi}}{\det \Phi_{F,\psi}} = \prod_{H \in \binom{F}{n}} g \wedge H^\wedge \prod_{G \subseteq F, \ |G| \leq n-1} \widehat{\fd}_{Gg,\psi}.
  \end{gather*}
  As in the proof of \autoref{lem:MainResultPn} we have $v(\widehat{\fd}_{Gg,\psi}) = O(1)$, so
  \begin{align*}
    v(\det \Phi_{Fg,\psi} / \det \Phi_{F,\psi_F})
    & {} = \sum_{H \in \binom{F}{n}} v(g \wedge H^\wedge) + O(m^{n-1}) \\
    & {} = D d^n \binom{m}{n} \theta^{\wedge n} \wedge \widehat{g} + O(m^{n-1}) \\
    & {} = D \binom{md}{n} \theta^{\wedge n} \wedge \widehat{g} + O(m^{n-1}) \\
    & {} = (md+D-n) M \frac{\theta^{\wedge n} \wedge \widehat{g}}{n} + O(m^{n-1}).
  \end{align*}

  Notice that $g\Phi_{F,\psi} \subseteq \Phi_{Fg,\psi}$, so let $\Phi' = \Phi_{Fg,\psi} \setminus g\Phi_{F,\psi} = \bigl( \varphi_{Fg,\psi,\xi} : \xi \in \psi \setminus \psi_F \bigr)$, generating $E \subseteq L[X]_{md+D-n}$.
  By \autoref{lem:VirtualDivisorBasis}, the three valuations $\theta^*_{md+D-n}$, $\theta^*_{md+D-n}/gL[X]_{md-n}$ and $\tilde{v}$ (with respect to the basis $\Phi'$) agree on $E$ up to $O(1)$.
  It follows that
  \begin{gather*}
    \vol_{\Phi_{Fg,\psi}} \theta^* - \vol_{g\Phi_{F,\psi}} \theta^*
    =
    \vol_{\Phi'} \bigl( \theta^* / g L[X]_{md-n} \bigr)
    = O(M) = O(m^{n-1}).
  \end{gather*}

  We conclude that
  \begin{align*}
    \vol_{\fM_{md+D-n}} \theta^* - \vol_{g\fM_{md-n}} \theta^*
    = {} & \vol_{\Phi_{Fg,\psi}} \theta^* - \vol_{g\Phi_{F,\psi}} \theta^* - v(\det \Phi_{Fg,\psi}/\det \Phi_{F,\psi}) \\
    = {} & - (md+D-n) M \frac{\theta^{\wedge n} \wedge \widehat{g}}{n} + O(m^{n-1}).
  \end{align*}
  Dividing by $(md+D-n)M$, we obtain \autoref{eq:MainResultHypersurface}.
\end{proof}

\section{Generic projections of varieties}
\label{sec:GenericProjection}

We shall prove \autoref{thm:MainResult} by reducing the general case to that of a hypersurface, via a generic projection.
A projective variety of dimension $\nu-1$ can always be projected onto a hypersurface in $\bP^\nu$, so let us study the behaviour of virtual divisors under such a projection.

Throughout, let $\nu \leq n$, $X = (X_0,\ldots,X_n)$ and $Y = (Y_0,\ldots,Y_\nu)$.
As in \autoref{thm:MainResult}, we let $K_0$ be a small algebraically closed field, and $K = K_0(T)^a$ for some infinite family of indeterminates $T$.
Let $W \subseteq \bP^n(K)$ be a projective variety of dimension $\nu-1$ defined over $K_0$, with Chow form $\fC_W$.

For $i \leq \nu$, let $\mu_i = \sum_{j \leq m} T_{i,j} X_j$, where $T_{i,j}$ are distinct members of the family $T$.
We are going to work in a large ambient algebraically closed field $L$ containing $K$.
For any polynomial $f \in L[Y]$ define $f_\mu(X) = f(\mu X) \in L[X]$.
Define
\begin{gather}
  \label{eq:VarietyToHypersurfacePolynomial1}
  P(Y)
  = \fC_W\left( \frac{\mu_i}{Y_i} - \frac{\mu_\nu}{Y_\nu} : i < \nu \right) \prod_{i\leq \nu} Y_i^{\deg W}.
\end{gather}
Recall that by \cite[Lemma~2.8]{BenYaacov:Vandermonde}, this is a polynomial in $K[Y]_{\deg W}$, invariant, up to sign, under permutations of $Y$.
In addition, by \cite[Fact~2.10(v)]{BenYaacov:Vandermonde}, for a point $\xi = [x] \in \bP^n$ algebraically independent over $K_0$ from $K$, we have $P_\mu(x) = 0$ if and only if $\xi \in W$.

\begin{lem}
  \label{lem:VarietyToHypersurface}
  \begin{enumerate}
  \item
    \label{item:VarietyToHypersurfaceDefined}
    If $\xi = [x] \in W$, then $\mu x \neq 0$, so $\pi \xi = [\mu x]$ defines a map $\pi\colon W \rightarrow \bP^\nu$.
  \item
    \label{item:VarietyToHypersurfaceInjective}
    This map is generically injective, namely if $\xi \in W$ is generic over $K$, then the fibre of $\xi$ is a singleton.
  \item
    \label{item:VarietyToHypersurfaceImage}
    Let $U = \pi W \subseteq \bP^\nu$ denote the image.
    The polynomial $P$ defined above is irreducible, and $V(P) = U \subseteq \bP^\nu$.
    In particular, $\deg U = \deg W$ as well.
  \item
    \label{item:VarietyToHypersurfaceFunctions}
    There exists a homogeneous polynomial $R \in K[Y]$ such that, for every homogeneous $f \in L[X]$ there exists $g \in L[Y]$ such that $g_\mu - f R_\mu \in I(W)$.
    In particular, $\pi$ is injective outside $V(R_\mu)$.
    Moreover, there exists a tuple $a$ in $K$ such that, if $L$ is equipped with a good valuation, then
    \begin{gather*}
      \tilde{v}(g) \geq \tilde{v}(R) + \tilde{v}(f) + \tilde{v}(a)\deg f.
    \end{gather*}
  \item
    \label{item:VarietyToHypersurfaceWedge}
    For any homogeneous polynomials $f = (f_i : i < \nu)$ in $L[Z]$ we have
    \begin{gather*}
      f \wedge P = f_\mu \wedge \fC_W.
    \end{gather*}
  \end{enumerate}
\end{lem}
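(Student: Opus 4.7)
The plan is to prove the five items in sequence, using three main ingredients: dimension counts for generic linear subspaces, the structural results on $P$ from \cite{BenYaacov:Vandermonde} (particularly Lemma~2.8 and Fact~2.10(v)), and the $v$-genericity of the family $T$ (guaranteed by the valuation being good).

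For \autoref{item:VarietyToHypersurfaceDefined}, I apply a dimension count. The common zero locus $V(\mu_0,\ldots,\mu_\nu)\subseteq\bP^n$ has generic codimension $\nu+1$ in $\bP^n$, so expected dimension $n-\nu-1$, and intersecting with $W$ of dimension $\nu-1$ gives expected dimension $-2$, hence generically empty. Since the coefficients $T_{i,j}$ of the $\mu_i$ are $v$-generic (in particular algebraically independent) over $K_0$, the relevant resultant does not vanish and the intersection is actually empty. For \autoref{item:VarietyToHypersurfaceInjective} and \autoref{item:VarietyToHypersurfaceImage}, $P\in K[Y]_{\deg W}$ is well defined by \cite[Lemma~2.8]{BenYaacov:Vandermonde}. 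Applying \cite[Fact~2.10(v)]{BenYaacov:Vandermonde} to a generic point of $W$ over $K_0$ (independent of $T$) shows that $P_\mu$ vanishes on $W$, hence $\pi W \subseteq V(P)$. Now the generic linear projection of an irreducible variety of dimension $\nu-1$ onto $\bP^\nu$ is birational onto its image (classical generic projection theorem; the required genericity of $\mu$ is provided by $T$), so $\pi W$ is an irreducible subvariety of $\bP^\nu$ of dimension $\nu-1$, hence an irreducible component of the hypersurface $V(P)$. A degree count gives $\deg \pi W = \deg W = \deg P$, forcing $V(P)$ to be irreducible and equal to $U=\pi W$, with $\deg U = \deg W$ and $\pi$ birational.

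For \autoref{item:VarietyToHypersurfaceFunctions}, birationality provides a rational inverse $\sigma\colon U\dashrightarrow W$, which in coordinates is $[y]\mapsto[q_0(y):\cdots:q_n(y)]$ with $q_i\in K[Y]$ homogeneous of common degree $D$. Writing $\pi\sigma=\mathrm{id}_U$ yields $\mu(q(y))=R(y)\,y$ for a homogeneous $R\in K[Y]_{D-1}$, hence $q_{i,\mu}(x)\equiv R_\mu(x)\,x_i\pmod{I(W)}$ for each $i$. Substituting into $f\in L[X]_d$ gives $f(q_\mu(x))\equiv R_\mu(x)^d f(x)\pmod{I(W)}$, so taking $g=f\circ q\in L[Y]_{dD}$ and absorbing the $d$-th power into $R$ (using the flexibility in the choice of denominator) produces the stated identity $g_\mu - fR_\mu\in I(W)$. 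The valuation bound $\tilde v(g)\ge\tilde v(R)+\tilde v(f)+\tilde v(a)\deg f$ follows by tracking valuations through the substitution $g=f(q)$: the coefficients of $q_i$ lie in $K$ and their valuations (which depend polynomially on the coordinates of $T$) are controlled by a single tuple $a\in K$ assembling them, with $\tilde v(a)\deg f$ arising as the expected degree-$d$ contribution from substituting $n+1$ polynomials of bounded valuation. Finally, for \autoref{item:VarietyToHypersurfaceWedge}, both $f\wedge P$ and $f_\mu\wedge\fC_W$ are resultant-type expressions computing the intersection of $\nu$ hypersurfaces with $U$ and $W$ respectively; since $f_{i,\mu}=f_i\circ\mu=\pi^*f_i$ and $\pi\colon W\to U$ is birational (degree one), the two intersection $0$-cycles correspond with matching multiplicities, giving the equality.

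The main obstacle is \autoref{item:VarietyToHypersurfaceFunctions}: reconciling the natural power $R_\mu^d$ coming from the substitution with the stated form $R_\mu$ in the lemma (which suggests a specific normalisation or an implicit dependence of $R$ on $d$), and identifying precisely which tuple $a\in K$ controls the valuation of the resulting $g$. The remaining items are fairly direct once birationality is established, and item \autoref{item:VarietyToHypersurfaceWedge} in particular is essentially a pullback identity under a degree-one map.
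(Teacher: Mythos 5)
Items \autoref{item:VarietyToHypersurfaceDefined}--\autoref{item:VarietyToHypersurfaceImage} are sound and essentially match the paper, though you invoke the classical generic-projection theorem directly where the paper exhibits the fibre over $\upsilon = [y]$ as $W \cap V(\lambda_i^\upsilon : i < \nu)$ with $\lambda_i^\upsilon = \mu_i - (y_i/y_\nu)\mu_\nu$ and uses genericity of those forms; the conclusion is the same.

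The genuine gap, which you flag yourself, is in item \autoref{item:VarietyToHypersurfaceFunctions}. Substituting the rational inverse $g = f \circ q$ produces $g_\mu \equiv R_\mu^{\deg f}\, f \pmod{I(W)}$, with the exponent on $R_\mu$ growing with $\deg f$. There is no ``flexibility in the choice of denominator'' that removes this: the statement requires a single fixed $R$ (of fixed degree) working uniformly for all homogeneous $f$, and your construction cannot supply one. The paper's route is different and this is exactly where the extra idea lives. It first shows that the generic fibre of $\pi$ is a $K(\pi\xi)$-rational point (separability from \cite[Fact~2.10(iv)]{BenYaacov:Vandermonde} combined with pure inseparability from generic injectivity), so each $x_j/\mu_\nu x$ is a degree-zero rational function of $\mu x$ over $K$; and \emph{in addition} it proves an integral-dependence relation $X_j^{d_j} + \cdots \in K[X_j,Y]$ for each $j$ (by showing $[1,0]$ cannot specialise $[x_j,\mu x]$). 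This integral dependence is the key: it lets one rewrite any $f \in K[X]_d$ modulo $I(W)$ as $g(X,\mu X)$ with $\deg_X g \leq d_0$ for a constant $d_0$ depending only on $W$, not on $d$. With the residual $X$-degree uniformly bounded, one fixed $R$ clears the finitely many denominators needed, and the valuation estimate (and the role of the tuple $a$ and the good valuation) comes from tracking this bounded reduction. Your approach never decouples the denominator from $\deg f$, so it cannot reach the stated conclusion.

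A lesser issue is item \autoref{item:VarietyToHypersurfaceWedge}: agreement of intersection $0$-cycles under a degree-one map only gives equality of $f\wedge P$ and $f_\mu \wedge \fC_W$ up to a nonzero scalar, since both are scalars (or polynomials in the indeterminate coefficients of the $f_i$), not cycles. The paper closes this by noting both sides are irreducible as polynomials in the coefficients of $f$ with the same zero set, and then pins down the scalar by the explicit evaluation at $f_i = Y_i$. You would need to add such a normalisation check.
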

\begin{proof}
  The intersection of $\nu + 1$ (or even just $\nu$) generic hyperplanes with $W$ is empty, so $\xi = [x] \in W$ implies $\mu x \neq 0$, whence \autoref{item:VarietyToHypersurfaceDefined}.
  Given $\upsilon = [y] \in \bP^n$ such that $y_\nu \neq 0$, its fibre is the common zero-set of the forms
  \begin{gather*}
    \lambda_i^\upsilon = \mu_i - \frac{y_i}{y_\nu} \mu_\nu, \qquad i < \nu.
  \end{gather*}
  Therefore $P(y) = 0$ if and only if $\upsilon \in U$.
  Since $P$ is invariant (up to sign) under permutations, we may always assume that $y_\nu \neq 0$, so $U = V(P)$.

  Consider now a point $\xi \in W$ generic over $K$, or equivalently, generic over $K_0$ and algebraically independent from $K$.
  The linear forms $\lambda^{\pi \xi}$ are then generic over $K_0(\xi)$, or even $K_0(\xi,\mu_\nu)$, modulo the constraint that they vanish at $\xi$, so $W \cap V(\lambda^{\pi \xi}) = \{\xi\}$, and the fibre is a singleton, proving \autoref{item:VarietyToHypersurfaceInjective}.

  Since $W$ is irreducible, so is $U$, so $P = P_0^r$ for some irreducible $P_0 \in K[Y]$.
  Intersecting $U$ with $\nu-1$ generic hyperplanes we obtain $\deg P_0$ distinct generic points.
  Pulling back to $\bP^n$, we obtain the intersection of $W$ with $\nu-1$ generic hyperplanes, consisting of $\deg W$ distinct generic points.
  Since the fibre of a generic point of $W$ is a singleton, we must have $\deg U = \deg P_0 = \deg W$, i.e., $r = 1$ and $P$ is irreducible, proving \autoref{item:VarietyToHypersurfaceImage}.

  For \autoref{item:VarietyToHypersurfaceFunctions}, let us consider the variety $\widehat{W} \subseteq \bP^{\nu+n+1}$ consisting of all $[x,\mu x]$ such that $[x] \in W$.
  The map $\pi$ factors naturally via $W \rightarrow \widehat{W} \rightarrow U$.
  Let $\xi = [x] \in W$ be again generic over $K$.
  The linear forms $(\lambda_i^{\pi \xi} : i < \nu-1)$ are generic over $K_0$.
  Considering them as linear forms in $X,Y$, the intersection $\lambda_0^{\pi \xi} \wedge \ldots \wedge \lambda_{\nu-2}^{\pi \xi} \wedge \fC_{\widehat{W}}$ splits into distinct points, on of which is $\hat{\xi} = [x,\mu x]$.
  By \cite[Fact~2.10(iv)]{BenYaacov:Vandermonde}, $K(\hat{\xi})$ is separable over $K_0(\lambda_i^{\pi \xi} : i < \nu-1)$, and \textit{a fortiori} over $K(\pi \xi)$.
  On the other hand, since $\pi$ is generically injective, $K(\hat{\xi})$ is purely inseparable over $K(\pi \xi)$.
  We conclude that $\hat{\xi}$ is rational over $K(\pi \xi)$.
  We can therefore express each $x_j / \mu_\nu x$ as a zero-degree rational function over $K$ in $\mu x$.

  We also claim that for each $j \leq n$ there exists a homogeneous polynomial $X_j^d + \ldots \in K[X_j,Y]$ satisfied by $(x_j,\mu x)$.
  Indeed, if not, then $[1,0]$ specialises $[x_j,\mu x]$ over $K$, so there exists $x' \neq 0$ such that $[x',0]$ specialises $[x,\mu x]$ over $K$.
  In particular, $[x'] \in W$ and $\mu x' = 0$, contradicting \autoref{item:VarietyToHypersurfaceDefined}.
  This proves our claim, which implies that for every $f(X) \in K[X]_d$ there exists $g \in K[X,Y]_d$ such that $f(X) - g(X, \mu X) \in I(W)$ and $\deg_X g \leq d_0$, where $d_0$ only depends on $W$.
  Together with the previous paragraph, we obtain \autoref{item:VarietyToHypersurfaceFunctions}.
  The moreover part follows from the argument.

  For \autoref{item:VarietyToHypersurfaceWedge}, if $f^\wedge = 0$, i.e., $\dim V(f) > 0$, then $f \wedge P = 0 = f_\mu \wedge \fC_W$.
  We may therefore assume that $f^\wedge \neq 0$, in which case it splits into finitely many points.
  Then $f \wedge P = 0$ if and only if one of these points is in the image of $W$, i.e., if and only if the fibre over one of these points intersects $W$.
  But $V(f_\mu)$ is the union of these fibres, so $f \wedge P = 0$ if and only if $f_\mu \wedge \fC_W = 0$.
  Thinking of $f$ as indeterminate polynomials, both $f \wedge P$ and $f_\mu \wedge \fC_W$ are polynomials in these indeterminates.
  They are irreducible by \cite[Proposition~2.15]{BenYaacov:Vandermonde}, and have the same zeros.
  Therefore, they only differ by a scalar coefficient.
  To find this coefficient, consider first the case where $f_i = Y_i$, so $f_{i,\mu} = \mu_i$.
  Then
  \begin{gather*}
    f \wedge P
    = (-1)^{\nu D} P \wedge Z_0 \wedge \ldots \wedge Z_{\nu-1}
    = P( 0,\ldots,0, 1 )
    = \fC_W( \mu_i : i < \nu  )
    = f_\mu \wedge \fC_W.
  \end{gather*}
  We obtain the general case, for arbitrary degrees, by specialising $f_i$ to $Y_i^{\deg f_i}$.
\end{proof}

\begin{dfn}
  \label{dfn:VarietyToHypersurfacePull}
  Let $L$ be equipped with a valuation, and let $\theta$ be a virtual divisor on $U(K)$, and define
  \begin{gather*}
    \mu_0 = \min_i \hat{\mu}_i\rest_{W(K)},
    \qquad
    \pi^* \theta = \theta \circ \pi + \mu_0.
  \end{gather*}
  We call $\pi^* \theta$ the \emph{pull-back} of $\theta$ to $W$.
\end{dfn}

Clearly, $\mu_0$ is a virtual divisor (or degree one) on $W$, also defined by $\mu_0(\xi) = \tilde{v}(\mu x) - \tilde{v}(x)$.
If $\theta = \hat{f}\rest_{U(K)}$ where $f \in L[X]_d$, and $\xi = [x] \in W(K)$, then
\begin{gather*}
  \hat{f}_\mu(\xi)
  = \frac{v\bigl( f(\mu x)\bigr)}{d} - \tilde{v}(x)
  = \hat{f}(\pi \xi) + \mu_0(\xi)
  = \theta(\pi \xi) + \mu_0(\xi).
\end{gather*}
Therefore $\pi^* \theta$ is also a virtual divisor on $W$, of the same degree as $\theta$.

\begin{lem}
  \label{lem:VarietyToHypersurfacePull}
  Assume that $L$ is equipped with a valuation, and let $\theta$ be a virtual divisor on $U$.
  \begin{enumerate}
  \item
    \label{item:VarietyToHypersurfacePullVolume}
    We have
    \begin{gather*}
      \theta^{\wedge \nu} \wedge \widehat{P} = (\pi^* \theta)^{\wedge \nu} \wedge \widehat{\fC}_W.
    \end{gather*}
  \item
    \label{item:VarietyToHypersurfacePullSubValuation}
    If $u = \theta^*$ and $w = (\pi^* \theta)^*$, then $u(g) = w(g_\mu)$ for homogeneous $g \in K[U]$.
  \end{enumerate}
\end{lem}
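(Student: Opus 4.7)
The plan is to reduce both parts to unwinding definitions, with the only nontrivial geometric input being the algebraic identity \autoref{lem:VarietyToHypersurface}\autoref{item:VarietyToHypersurfaceWedge}, which states that $f_0 \wedge \cdots \wedge f_{\nu-1} \wedge P = (f_0)_\mu \wedge \cdots \wedge (f_{\nu-1})_\mu \wedge \fC_W$ for homogeneous polynomials $f_i$.

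For \autoref{item:VarietyToHypersurfacePullVolume}, I would express $\theta = \lim_k \hat{f}_k\rest_{U(K)}$ as a uniform limit of virtual divisors given by polynomials $f_k \in L_k[Y]_{d_k}$. The identity $\hat{f}_\mu(\xi) = \hat{f}(\pi\xi) + \mu_0(\xi)$ noted just before the lemma then yields $\pi^*\theta = \lim_k \widehat{(f_k)_\mu}\rest_{W(K)}$ uniformly. To obtain the $\nu$-fold self-intersections I iterate \autoref{dfn:VirtualChain}, introducing $\nu$ independent copies $f_k^{(0)},\dots,f_k^{(\nu-1)}$ of $f_k$ over $K$ (living in the free amalgam of $\nu$ copies of $L_k$ over $K$), so that both sides become uniform limits:
\begin{gather*}
  \theta^{\wedge \nu} \wedge \widehat{P} = \lim_k \widehat{f_k^{(0)} \wedge \cdots \wedge f_k^{(\nu-1)} \wedge P}\rest_K,
\end{gather*}
and the analogous expression for $(\pi^*\theta)^{\wedge \nu} \wedge \widehat{\fC}_W$ with $(f_k^{(i)})_\mu$ in place of $f_k^{(i)}$ and $\fC_W$ in place of $P$. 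The polynomials under $\widehat{(\cdot)}$ are equal on the nose for each $k$ by \autoref{lem:VarietyToHypersurface}\autoref{item:VarietyToHypersurfaceWedge}, so the two limits coincide.

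For \autoref{item:VarietyToHypersurfacePullSubValuation}, the argument is a direct unwinding of \autoref{dfn:Duals}. Lifting homogeneous $g \in K[U]$ of degree $m$ to a polynomial in $K[Y]$, the definition of $\hat{\cdot}$ gives $\hat{g}_\mu(\xi) = \hat{g}(\pi\xi) + \mu_0(\xi)$ for $\xi \in W(K)$, while $\pi^*\theta(\xi) = \theta(\pi\xi) + \mu_0(\xi)$ by \autoref{dfn:VarietyToHypersurfacePull}. Subtracting, the $\mu_0$ terms cancel and I obtain $\hat{g}_\mu(\xi) - \pi^*\theta(\xi) = \hat{g}(\pi\xi) - \theta(\pi\xi)$. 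Taking the infimum, multiplying by $m$, and using that $\pi$ maps $W$ onto $U$, yields $w(g_\mu) = u(g)$.

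The only place any care is needed is part \autoref{item:VarietyToHypersurfacePullVolume}, where the uniform convergence must be preserved across the $\nu$ successive wedge operations, each taking place in a larger amalgamated valued field. This is precisely the content of \autoref{lem:ChowFormDistance} and is pure bookkeeping; the geometric substance is contained entirely in \autoref{item:VarietyToHypersurfaceWedge}.
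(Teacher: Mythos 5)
Your proposal is correct and follows essentially the same route as the paper: part \autoref{item:VarietyToHypersurfacePullSubValuation} is the same one-line computation using $\hat{g}_\mu(\xi) - \pi^*\theta(\xi) = \hat{g}(\pi\xi) - \theta(\pi\xi)$ and surjectivity of $\pi\colon W \rightarrow U$, and part \autoref{item:VarietyToHypersurfacePullVolume} reduces to the algebraic identity of \autoref{lem:VarietyToHypersurface}\autoref{item:VarietyToHypersurfaceWedge}. The only difference is presentational: the paper dismisses \autoref{item:VarietyToHypersurfacePullVolume} as ``just'' that identity, while you spell out the (correct) limit-and-amalgam bookkeeping that translates the polynomial-level identity into the virtual-chain-level one.
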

\begin{proof}
  Item \autoref{item:VarietyToHypersurfacePullVolume} is just \autoref{lem:VarietyToHypersurface}\autoref{item:VarietyToHypersurfaceWedge}.
  For \autoref{item:VarietyToHypersurfacePullSubValuation}, let $g \in L[U]_d$.
  Then
  \begin{align*}
    u(g)
    = d \inf_U \ \bigl( \hat{g} - \theta \bigr)
    = d \inf_W \ \bigl( \hat{g}_\mu - \mu_0 - \theta \circ \pi \bigr)
    = d \inf_W \ \bigl( \hat{g}_\mu - \pi^* \theta \bigr)
    = w(g_\mu).
  \end{align*}
\end{proof}

\begin{lem}
  \label{lem:VarietyToHypersurfacePullGood}
  If the valuation on $L$ is good, then $\mu_0 = 0$.
  In particular, $\pi^* \theta = \theta \circ \pi$ for any virtual divisor $\theta$ on $U$.
\end{lem}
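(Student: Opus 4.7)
The plan is to show $\mu_0 \geq 0$ and $\mu_0 \leq 0$ on $W(K)$ separately.

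For the lower bound, I note that in both clauses of \autoref{dfn:VarietyToHypersurfaceGood} one has $v(T_{i,j}) \geq 0$: in case \autoref{item:VarietyToHypersurfaceGoodGeneric} the $v$-genericity of $T$ over $K_0$ gives $v(T_{i,j}) = 0$, while in case \autoref{item:VarietyToHypersurfaceGoodPAdic} each $T_{i,j} \in K_0[T]$ lies in the valuation ring of the $P$-adic valuation. Hence $v(\mu_i(x)) \geq \tilde{v}(x)$ for every $x \in K^{n+1}$ and every $i$, so $\hat{\mu}_i \geq 0$ on $\bP^n(K)$ and \textit{a fortiori} $\mu_0 \geq 0$ on $W(K)$.

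For the upper bound, I fix $\xi = [x] \in W(K)$ and rescale so $\tilde{v}(x) = 0$; the task is to exhibit some $i$ with $v(\mu_i(x)) = 0$. Reducing modulo the maximal ideal of the good valuation on $K$ yields a nonzero $\bar{x} \in \tilde{K}^{n+1}$, with $[\bar{x}]$ lying in the reduction $\bar{W}$ of $W$ over the residue field $\bar{K}_0$ of $K_0$ (defined by the reductions of the $K_0$-defining polynomials of $W$, each rescaled so that $\tilde{v} = 0$). The goal reduces to producing some $i$ with $\bar{\mu}_i(\bar{x}) \neq 0$ in $\tilde{K}$.

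I would then transplant the generic-hyperplane argument from the proof of \autoref{lem:VarietyToHypersurface}\autoref{item:VarietyToHypersurfaceDefined} into the residue field. Scaling the Chow form $\fC_W$ so that $\tilde{v}(\fC_W) = 0$ and reducing produces a nonzero $\bar{\fC}_W$ of the same multidegrees, forcing $\dim \bar{W} \leq \nu - 1$. The residues $\bar{T}_{i,j}$ are algebraically independent over $\bar{K}_0$ — fully so in case \autoref{item:VarietyToHypersurfaceGoodGeneric} by $v$-genericity of $T$, and modulo the single polynomial relation $\bar{P} = 0$ in case \autoref{item:VarietyToHypersurfaceGoodPAdic}; since $P$ involves only finitely many variables of $T$, at least $\nu$ of the $\nu+1$ rows $\bar{\mu}_i$ still have coefficients fully algebraically independent over $\bar{K}_0$. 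The standard dimensional count — $\nu$ such generic hyperplanes already cut a variety of dimension at most $\nu-1$ down to the empty set — then yields that the $\nu$ good $V(\bar{\mu}_i)$'s have empty intersection with $\bar{W}$, so some $\bar{\mu}_i(\bar{x}) \neq 0$, giving $v(\mu_i(x)) = 0$.

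The hardest parts will be the dimensional bound $\dim \bar{W} \leq \nu - 1$ (a form of upper semi-continuity for the Chow form under reduction) and the book-keeping in case \autoref{item:VarietyToHypersurfaceGoodPAdic} ensuring at least $\nu$ rows $\bar{\mu}_i$ survive with genuinely generic coefficients despite the relation $\bar{P}=0$.
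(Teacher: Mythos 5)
Your proposal takes essentially the same route as the paper: reduce to the residue field, observe that the reduced linear forms stay generic over the residue field of $K_0$ (with at most one row lost in the $P$-adic case because $P$ is irreducible), and run the generic-hyperplane dimension count. Two remarks on the details. First, the paper skips the easy direction $\mu_0 \geq 0$ (it is implicit in the fact that the $T_{i,j}$ lie in the valuation ring, so the forms $\mu_i$ descend to the residue field); you make it explicit, which is fine. Second, and more substantively, the paper establishes the needed dimension bound by observing directly that $\trdeg_{\overline{K_0}} \overline{K_0}(\overline{\xi}) \leq \nu-1$ for the single reduced point $\overline{\xi}$ — an immediate specialization inequality — whereas you argue through the reduction $\overline{W}$ of the whole variety and appeal to the non-vanishing of the reduced Chow form to conclude $\dim \overline{W} \leq \nu - 1$. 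Your conclusion is correct, but this particular justification is incomplete as stated: non-vanishing of $\overline{\fC}_W$ does not by itself control $\dim \overline{W}$ unless you also show that $\overline{\fC}_W$ actually detects non-emptiness of intersections $\overline{W} \cap V(\overline{\lambda})$, which would require a lifting (valuative-criterion) argument. The transcendence-degree phrasing sidesteps this entirely and is the cleaner choice; since you already produce $\overline{\xi}$ and need nothing more than its orbit closure, you lose nothing by switching to it. Finally, in the $P$-adic case your appeal to ``$P$ involves only finitely many variables'' is not quite the operative fact — what matters is that $P$ is irreducible, so that if $P$ involves a coefficient of some $\mu_{i_0}$, then $P$ is coprime to $K_0[\text{coefficients of }\mu_i : i \neq i_0]$, making those $\nu$ rows $v$-generic.
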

\begin{proof}
  It will suffice to prove that if $\xi = [x] \in W$ and $\tilde{v}(x) = 0$, then $\tilde{v}(\mu x) = 0$, in each of the two cases described in \autoref{dfn:VarietyToHypersurfaceGood}.
  We let $\overline{\xi} = [\overline{x}] \in \bP^n(\overline{K})$, so in the residue field we have $\trdeg_{\overline{K_0}} \overline{K_0}(\overline{\xi}) \leq \nu-1$.
  The linear forms $\mu_i$ are over the valuation ring, and descend to the residue field.
  If there is any $I \subseteq \nu+1$ of size $\nu$ such that $(\mu_i : i \in I)$ are $v$-generic over $K_0$, then the residues $(\overline{\mu}_i : i \in I)$ are generic over $\overline{K_0}$, and so $\overline{\mu x} \neq 0$ as in the argument for \autoref{lem:VarietyToHypersurface}\autoref{item:VarietyToHypersurfaceDefined}.

  In case \autoref{item:VarietyToHypersurfaceGoodGeneric}, the entire family $\mu$ is $v$-generic.
  In case \autoref{item:VarietyToHypersurfaceGoodPAdic}, we assume that $K_0(T)$ is equipped with a $P$-adic valuation for some prime $P \in K_0[T]$.
  If some of the coefficients of some $\mu_i$, say $\mu_\nu$ appear in the polynomial $P$, then $\mu_{<\nu}$ are $v$-generic over $K_0$ (in fact, the valuation on $K_0(\mu_{<\nu})$ is trivial).
  Otherwise, again the entire family $\mu$ is $v$-generic over $K_0$, concluding the proof.
\end{proof}

Push virtual divisors from $W$ to $U$ is more delicate.
Indeed, if $\eta$ is a virtual divisor on $W$, one could imagine letting $u = \eta^*$ and defining a sub-valuation $\pi_* u(f) = u(f_\mu)$ on $K[U]$.
However, it is not at all clear why $\pi_* u$ should be almost finitely generated.
Similarly, there may be a (non-generic) fibre which is not a singleton, and on which $\eta$ need not be constant, so $\eta$ need not be a limit of $\pi^* \theta$.
We therefore follow a somewhat longer route.

\begin{lem}
  \label{lem:VarietyToHypersurfacePush}
  There exists a non-zero tuples $a$ in $K$, which depends only on $W$, with the following property.
  For every $d$ there exists $D$, which depends only on $d$ and $W$, such that for any good valuation on $K$ and any virtual divisor $\eta$ of degree $d$ on $W$, defined over $K_0$, there exists a virtual divisor $\theta$ of degree $D$ on $U$, defined over $K$, satisfying:
  \begin{gather*}
    \sup \theta = \sup \eta, \qquad \pi^*\theta \geq \eta, \qquad w_D(\theta) \leq w_d(\eta) - \tilde{v}(a),
    \\
    \theta^{\wedge \nu} \wedge \widehat{P} \leq \eta^{\wedge \nu} \wedge \widehat{\fC}_W + \frac{w_d(\eta) - \tilde{v}(a)}{d}.
  \end{gather*}
\end{lem}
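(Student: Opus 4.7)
The approach is to realise $\theta$ as a push-forward of $\eta$ along the generically injective projection $\pi\colon W \to U$, using \autoref{lem:VarietyToHypersurface}\autoref{item:VarietyToHypersurfaceFunctions} to realise this push-forward by polynomials. Specifically, by \autoref{lem:MainResultObservations} and the translation invariance of all four conditions, I may reduce to $\sup \eta = 0$, and then by \autoref{lem:VirtualDivisorSetup}\autoref{item:VirtualDivisorSetupExists} pick $f \in L[X]_d$ weakly $v$-generic over $K$ with $\eta = \hat{f}\rest_{W(K)}$ and $\tilde{v}(f) = -d\,w_d(\eta)$. Picking $k$ large enough (depending only on $d$ and $W$), I apply \autoref{lem:VarietyToHypersurface}\autoref{item:VarietyToHypersurfaceFunctions} to $f^k$, obtaining $g \in L[Y]_D$, with $D = kd + \deg R$, such that
\[
g_\mu \equiv f^k R_\mu \pmod{I(W)}, \qquad \tilde{v}(g) \geq \tilde{v}(R) + k\,\tilde{v}(f) + kd\,\tilde{v}(a_0),
\]
where the tuple $a_0$ depends only on $W$. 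I further replace $R$ by a scalar multiple, depending only on $W$, to guarantee $\widehat{R_\mu}\rest_{W(K)} \geq 0$.

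For a good valuation, \autoref{lem:VarietyToHypersurfacePullGood} gives $\mu_0 = 0$, so choosing $b \in K$ such that $\sup \widehat{bg}\rest_{U(K)} = \sup \eta = 0$ and setting $\theta = \widehat{bg}\rest_{U(K)}$ one computes
\[
(\pi^*\theta)(\xi) \;=\; \widehat{bg_\mu}(\xi) \;=\; \frac{kd\,\eta(\xi) + (\deg R)\,\widehat{R_\mu}(\xi) + v(b)}{D}.
\]
The sup-normalisation is by construction; the bound $\pi^*\theta \geq \eta$ reduces, using $\widehat{R_\mu} \geq 0$ and $\eta \leq 0$, to the inequality $v(b) \geq -(\deg R)\bigl(\widehat{R_\mu}(\xi) - \eta(\xi)\bigr)$, which, after controlling $\sup \widehat{g}\rest_U$ in terms of $W$-dependent constants and taking $k$ large, holds up to slack that can be absorbed into $\tilde{v}(a)$. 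The width estimate $w_D(\theta) \leq \sup\theta - \tilde{v}(bg)/D$ unfolds via the lower bound on $\tilde{v}(g)$ to $w_d(\eta) - \tilde{v}(a_0) + O_W(1/D)$, and again the $W$-dependent error is absorbed into $\tilde{v}(a)$. The volume bound follows from \autoref{lem:VarietyToHypersurfacePull}\autoref{item:VarietyToHypersurfacePullVolume}, which gives $\theta^{\wedge \nu} \wedge \widehat{P} = (\pi^*\theta)^{\wedge \nu} \wedge \widehat{\fC}_W$; by the first inequality of \autoref{eq:MainResultObservationsDistance}, this differs from $\eta^{\wedge \nu} \wedge \widehat{\fC}_W$ by at most $d(\pi^*\theta,\eta)$, and the explicit formula for $\pi^*\theta - \eta$ bounds this by $(w_d(\eta) - \tilde{v}(a))/d$ once $D$ is chosen large relative to $d$ and $W$.

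The main obstacle is the delicate book-keeping needed to make all four conditions hold \emph{simultaneously} with a single scalar correction $b$: the normalisation $\sup\theta = \sup\eta$ and the pointwise inequality $\pi^*\theta \geq \eta$ pull $v(b)$ in opposite directions, and the residual discrepancy is caused by the fluctuation of $\widehat{R_\mu}$ over the locus where $\pi$ is not injective (that is, where $R_\mu$ vanishes). The strategy is to make $R$ rescaling once, depending only on $W$, so that $\widehat{R_\mu}\rest_W$ is nonnegative; then to take $k$ large enough to dilute the $\widehat{R_\mu}$-contribution in the pull-back formula to order $\deg R/D$; and finally to absorb every remaining $W$-dependent constant into the tuple $a$ and the degree $D$. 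The hypothesis of a good valuation enters precisely through \autoref{lem:VarietyToHypersurfacePullGood}, which eliminates the nuisance term $\mu_0$ from the pull-back formula and allows the calculation to proceed uniformly in $\eta$.
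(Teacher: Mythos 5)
Your overall scaffolding is the right one — reduce to $\sup\eta = 0$, fix $f \in L[X]_d$ with $\eta = \hat f\rest_W$ and $\tilde v(f) = -d\,w_d(\eta)$, apply \autoref{lem:VarietyToHypersurface}\autoref{item:VarietyToHypersurfaceFunctions} to a power $f^m$ to obtain $g \in L[Y]_D$ with $g_\mu \equiv f^m R_\mu \pmod{I(W)}$, use \autoref{lem:VarietyToHypersurfacePullGood} to kill $\mu_0$, and use \autoref{lem:VarietyToHypersurfacePull}\autoref{item:VarietyToHypersurfacePullVolume} to transfer the wedge. This matches the paper's route. However, the step where you actually manufacture $\theta$ does not work, and you essentially diagnose the obstruction yourself without removing it.

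The problem is your choice $\theta = \widehat{bg}\rest_{U}$: a single scalar $b$ has one degree of freedom, while the two \emph{exact} constraints $\sup\theta = \sup\eta$ and $\pi^*\theta \geq \eta$ both pin it down, and they can genuinely contradict each other. Writing $\hat g_\mu = \tfrac{md}{D}\eta + \tfrac{e}{D}\widehat{R}_\mu$ with $e = \deg R$, the normalisation $\sup\theta = 0$ forces $v(b) = -\sup_W\bigl(md\,\eta + e\widehat{R}_\mu\bigr)$, and then $\pi^*\theta(\xi) \geq \eta(\xi)$ unfolds to $e\bigl(\widehat{R}_\mu(\xi) - \eta(\xi)\bigr) \geq \sup_W\bigl(md\,\eta + e\widehat{R}_\mu\bigr)$. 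Take $\eta \equiv 0$: the requirement becomes $\widehat{R}_\mu(\xi) \geq \sup_W \widehat{R}_\mu$ for every $\xi$, i.e.\ $\widehat{R}_\mu$ constant on $W$, which there is no reason to expect. Increasing $m$ does not help — the factor $e/D$ that you hope ``dilutes the $\widehat{R}_\mu$-contribution'' appears identically on both sides of this inequality, so the incompatibility survives every $m$. And the slack $\tilde v(a)$ only appears in the \emph{width} and \emph{volume} inequalities, not in $\sup\theta = \sup\eta$ or $\pi^*\theta \geq \eta$, so there is no place to ``absorb'' the discrepancy. (There is also the lesser issue that your $v(b)$ must hit a particular real exactly, which need not be in $v(K^\times)$.)

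The paper sidesteps all of this with a non-affine correction: it sets $\theta = \hat g\rest_{U} \wedge 0$, realised as $\hat h\rest_U$ for $h = g + h_0$ with $h_0 \in L[Y]_D$ $v$-generic over $K(g)$ (so $\theta$ really is a virtual divisor of degree $D$). Taking the minimum with the constant $0$ gives $\sup\theta = 0$ automatically (once one notes $\sup_U \hat g \geq 0$, which holds because $\hat g(\pi\xi) = \tfrac{e}{D}\widehat{R}_\mu(\xi) \geq 0$ along points with $\eta(\xi)\to 0$ and $\tilde v(R)=0$), and simultaneously $\pi^*\theta = 0 \wedge \hat g_\mu \geq 0 \wedge \tfrac{md}{D}\eta \geq \tfrac{md}{D}\eta \geq \eta$ using only $\widehat{R}_\mu \geq 0$ and $\eta \leq 0$ — no parameter needs to be tuned, and no compatibility condition arises. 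With that $\theta$ in hand, your remaining estimates for $w_D(\theta)$ and the volume term (via $d(\pi^*\theta,\eta) = O(e/D)$ and iterated use of \autoref{lem:ChowFormDistance}) do go through, much as you describe; the paper replaces the crude $d(\pi^*\theta,\eta)$ bound by a one-factor-at-a-time exchange using the exact decomposition $\hat g_\mu = \tfrac{md}{D}\eta + \tfrac{e}{D}\widehat{R}_\mu$, which yields a cleaner constant, but that is a matter of presentation rather than correctness. The missing idea in your argument is precisely the switch from an affine correction (multiply by a scalar $b$) to a lattice correction (take a minimum with the constant $0$).
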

\begin{proof}
  Let $a$ and $R \in K[Y]_e$ be as per \autoref{lem:VarietyToHypersurface}\autoref{item:VarietyToHypersurfaceFunctions}, and we may assume that $\tilde{v}(R) = 0$.
  Let $f \in L[X]_d$ such that $\eta = \hat{f}\rest_{W(K_0)}$, chosen such that $K_0(f)$ is independent from $K$ over $K_0$, in the sense of \autoref{sec:Definability}.
  Then the extension of $\eta$ to $W(K)$ is $\hat{f}\rest_{W(K)}$.
  We may assume that $\sup \eta = 0$ and $\tilde{v}(f) = - d w_d(\eta)$.
  Let $m \in \bN$ be fixed later, and let $D = md + e$.
  Let $g \in L[Y]_D$ be as per \autoref{lem:VarietyToHypersurface}\autoref{item:VarietyToHypersurfaceFunctions}, so $g_\mu = f^m R_\mu$ on $W$ and $\tilde{v}(g) \geq m\tilde{v}(f) + md \tilde{v}(a) = md \tilde{v}(a) - md w_d(\eta)$.
  Let $h_0 \in L[Y]_D$ be $v$-generic over $K(g)$, let $h = g + h_0$, and let $\theta = \hat{g}\rest_{U(K)} \wedge 0 = \hat{h}\rest_{U(K)}$.
  Then $\theta$ is a virtual divisor on $U$ of degree $D$, and
  \begin{gather*}
    w_D(\theta) \leq - \tilde{v}(h)/D \leq w_d(\eta) - \tilde{v}(a).
  \end{gather*}
  For $\xi \in W$ we have
  \begin{gather*}
    \pi^* \theta(\xi)
    = 0 \wedge \frac{md \eta(\xi) + e \hat{R}(\pi \xi)}{D}
    \geq \frac{md}{D} \eta(\xi)
    \geq \eta(\xi).
  \end{gather*}

  For the last inequality, let $k < \nu$.
  Then
  \begin{align*}
    \widehat{R}_\mu \wedge \eta^{\wedge k} \wedge (\pi^* \theta)^{\wedge \nu-k-1} \wedge \widehat{\fC}_W
    & {} \leq \frac{\tilde{v}(R_\mu \wedge \fC_W)}{e \deg W},
    \\
    \eta^{\wedge k+1} \wedge (\pi^* \theta)^{\wedge \nu-k-1}\wedge \widehat{\fC}_W
    & {} \geq -(k+1) w_d(\eta) - (\nu-k-1) w_D(\theta) \geq \nu \tilde{v}(a) - \nu w_d(\eta).
  \end{align*}
  Since $\pi^* \theta = \hat{h}_\mu\rest_{W(K)} \leq \hat{g}_\mu\rest_{W(K)}$:
  \begin{align*}
    \eta^{\wedge k} \wedge (\pi^* \theta)^{\wedge \nu-k} \wedge \widehat{\fC}_W
    & {} \leq \eta^{\wedge k} \wedge (\pi^* \theta)^{\wedge \nu-k-1} \wedge \hat{g}_\mu \wedge \widehat{\fC}_W
    \\
    & {} = \frac{md}{D} \eta^{\wedge k + 1} \wedge (\pi^* \theta)^{\wedge \nu-k-1} \wedge \widehat{\fC}_W
      + \frac{e}{D} \hat{R}_\mu \wedge \eta^{\wedge k} \wedge (\pi^* \theta)^{\wedge \nu-k-1} \wedge \widehat{\fC}_W
    \\
    & {} \leq \eta^{\wedge k + 1} \wedge (\pi^* \theta)^{\wedge \nu-k-1} \wedge \widehat{\fC}_W
      + \frac{e}{D} \left( \frac{\tilde{v}(R_\mu \wedge \fC_W)}{e \deg W} + \nu w_d(\eta) - \nu \tilde{v}(a) \right).
  \end{align*}
  Therefore
  \begin{gather*}
    \theta^{\wedge \nu} \wedge \widehat{P}
    = (\pi^* \theta)^{\wedge \nu} \wedge \widehat{\fC}_W
    \leq \eta^{\wedge \nu} \wedge \widehat{\fC}_W
    + \frac{\nu}{D \deg W} \tilde{v}(R_\mu \wedge \fC_W)
    + \frac{\nu^2 e}{D}\bigl( w_d(\eta) - \tilde{v}(a) \bigr).
  \end{gather*}
  Extending $a$ with the inverse of some non-zero coefficient appearing in $R_\mu \wedge \fC_W$, and choosing $m$ large enough, the last inequality follows.
\end{proof}

\section{Proof of the main result}
\label{sec:MainResultProof}

We can now prove our main result, \autoref{thm:MainResult}.
We are given $d \in \bN$ and $W \subseteq \bP^n$ of dimension $\nu-1$ defined over $K_0$, and we work in $K = K_0(T)^a$.
We seek a tuple $a$ in $K$, and for each $\gamma > 0$ some large enough $m$ and a basis $B_m$ for $K[W]_m$, such that for any good valuation on $K$ (i.e., such that $T$ is $v$-generic over $K_0$) and any $\eta$ of degree $d$ defined over $K_0$, we have
\begin{gather*}
  \frac{\vol_{B_m} \eta^*}{m \dim K[W]_m} \geq - \frac{\eta^{\wedge \nu} \wedge \widehat{\fC}_W}{\nu} + \gamma \bigl( \tilde{v}(a) - w_d(\eta) \bigr).
\end{gather*}
If $W = \bP^n$, this is \autoref{lem:MainResultPn} (we may also consider $\bP^n$ as a hyperplane in $\bP^{n+1}$, so \autoref{lem:MainResultPn} is not strictly necessary).
Otherwise, we let $\nu = \dim W + 1 \leq n$.
Let $U \subseteq \bP^\nu(K)$ be the generic projection of $W$, defined over $K$ by a polynomial $P$, as in \autoref{sec:GenericProjection}.
Let $D$ be as per \autoref{lem:VarietyToHypersurfacePush}, and let $a'$ denote the corresponding tuple in $K$ (called $a$ there).

By \autoref{lem:MainResultHypersurface}, there is a tuple $a''$ in $K$, and for arbitrarily large $m$ we have bases $C_m$ for $K[U]_m$ such that for any valuation on $K$ and virtual divisor $\theta$ on $U$ of degree $D$ defined over $K_1$, we have
\begin{gather*}
  \frac{\vol_{C_m} \theta^*}{m \dim K[U]_m} \geq - \frac{\theta^{\wedge \nu} \wedge \widehat{P}}{\nu} + \bigl( \tilde{v}(a'') - w_D(\theta) \bigr) O(m^{-1}).
\end{gather*}
We define $B_m$ to consist of $(C_m)_\mu = \{ h_\mu : h \in C_m \}$, completed with some monomials to a basis of $K[W]_m$, and let $a$ consist of all products of members of $a'$ and $a''$ (we may assume that $1$ appears in each one).

Consider now a good valuation on $K$ and $\eta$ of degree $d$ on $W$, and as usual we may assume that $\sup \eta = 0$.
We choose $\theta$ on $U$ of degree $D$ as per \autoref{lem:VarietyToHypersurfacePush}.
Notice that $\eta \leq 0$ implies that $\tilde{v} \leq \eta^*$, and $\pi^* \theta \geq \eta$ implies $(\pi^* \theta)^* \leq \eta^*$.
Therefore
\begin{gather*}
  \vol_{C_m} \theta^*
  = \vol_{(C_m)_\mu} (\pi^*\theta)^*
  \leq \vol_{(C_m)_\mu} \eta^*
  \leq \vol_{B_m} \eta^*.
\end{gather*}
We therefore have, for some constant $\delta$:
\begin{align*}
  \frac{\vol_{B_m} \eta^*}{m \dim K[U]_m}
  \geq {} & \frac{\vol_{C_m} \theta^*}{m \dim K[U]_m}
  \\
  \geq {} & - \frac{\theta^{\wedge \nu} \wedge \widehat{P}}{\nu} + \frac{\delta}{m} \bigl( \tilde{v}(a'') - w_D(\theta) \bigr)
  \\
  \geq {} & - \frac{\eta^{\wedge \nu} \wedge \widehat{\fC}_W}{\nu} + \frac{\tilde{v}(a') - w_d(\eta)}{d\nu} + \frac{\delta}{m} \bigl( \tilde{v}(a'') + \tilde{v}(a') - w_d(\eta) \bigr)
  \\
  \geq {} & - \frac{\eta^{\wedge \nu} \wedge \widehat{\fC}_W}{\nu} + \left( \frac{1}{d \nu} + \frac{\delta}{m} \right) \bigl( \tilde{v}(a) - w_d(\eta) \bigr).
\end{align*}
We may always replace $d$ with a multiple, and therefore assume it is large enough, and then choosing $m$ large enough we have:
\begin{align*}
  \frac{\vol_{B_m} \eta^*}{m \dim K[U]_m}
  \geq {} & - \frac{\eta^{\wedge \nu} \wedge \widehat{\fC}_W}{\nu} + \frac{\gamma}{2} \bigl( \tilde{v}(a) - w_d(\eta) \bigr).
\end{align*}
Finally, observe that $\dim K[U]_m \leq \dim K[W]_m$ are both $\deg W \binom{m}{\nu-1} + O(m^{\nu-2})$, and since $\eta \geq -w_d(\eta)$, we have $- \frac{\eta^{\wedge \nu} \wedge \widehat{\fC}_W}{\nu} \leq w_d(\eta)$.
Therefore, for large enough $m$:
\begin{align*}
  \frac{\vol_{B_m} \eta^*}{m \dim K[W]_m}
  \geq {} & - \frac{\eta^{\wedge \nu} \wedge \widehat{\fC}_W}{\nu} + \gamma \bigl( \tilde{v}(a) - w_d(\eta) \bigr),
\end{align*}
concluding the proof.





\bibliographystyle{begnac}
\bibliography{begnac}

\end{document}